\documentclass[12pt,a4paper,reqno]{amsart}
\usepackage[mathcal]{eucal}
\usepackage{amssymb}
\usepackage[nice]{nicefrac}
\usepackage{hyperref}
\usepackage[normalem]{ulem}
\usepackage{xcolor}
\usepackage{mathrsfs}  
\usepackage{cite}

\addtolength{\hoffset}{-1cm}
\addtolength{\textwidth}{2cm}
\addtolength{\voffset}{-1cm}
\addtolength{\textheight}{2cm}

\setlength{\marginparwidth}{2,5cm}

\theoremstyle{plain}
\newtheorem{theorem}{Theorem}[section]
\newtheorem{lemma}[theorem]{Lemma}
\newtheorem{proposition}[theorem]{Proposition}
\newtheorem{corollary}[theorem]{Corollary}

\theoremstyle{definition}

\newtheorem{problem}[theorem]{Problem}

\numberwithin{equation}{section}

\DeclareMathOperator{\hdim}{hdim}
\DeclareMathOperator{\hspec}{hspec}

\makeatletter
\newcommand{\AlignFootnote}[1]{%
    \ifmeasuring@
    \else
        \footnote{#1}%
    \fi
}
\makeatother

\newcommand{\Z}{\mathbb{Z}}
\newcommand{\Q}{\mathbb{Q}}
\newcommand{\N}{\mathbb{N}}

\title[The finitely generated spectra of a family of pro-$p$ groups]{The finitely generated Hausdorff spectra of a family of pro-$p$ groups}

\author[I. de las Heras]{Iker de las Heras} 
\address{Iker de las Heras: Mathematisches Institut, Heinrich-Heine-Universit\"at, 40225
  D\"usseldorf, Germany; Department of Mathematics, University
  of the Basque Country UPV/EHU, 48940 Leioa, Spain}
\email{Iker.Delasheras@hhu.de}

\author[A. Thillaisundaram]{Anitha Thillaisundaram} 
\address{Anitha Thillaisundaram: School of Mathematics and Physics,
  University of Lincoln, Lincoln LN6 7TS, United Kingdom}
\email{anitha.t@cantab.net}

\date{\today}

\thanks{The first author is supported by the Spanish Government, grant MTM2017-86802-P, partly with FEDER funds, and by the Basque Government, grant IT974-16. He is also supported by a postdoctoral grant of the Basque Government, and from the Deutsche Forschungsgemeinschaft grant GRK 2240: Algebro-Geometric Methods in Algebra, Arithmetic and Topology. The second author acknowledges the support from EPSRC, grant EP/T005068/1. Both authors thank the Heinrich-Heine-Universit\"{a}t D\"{u}sseldorf, where a large part of this research was carried out.}

\keywords{Pro-$p$ groups, Hausdorff dimension, 
    normal Hausdorff spectrum, finitely generated Hausdorff spectrum}

\subjclass[2010]{Primary 20E18; Secondary 28A78}

\begin{document}

\maketitle

\begin{abstract}
 Recently the first example of a family of pro-$p$ groups, for $p$ a prime, with full normal Hausdorff spectrum was constructed. In this paper we further investigate this family by computing their finitely generated Hausdorff spectrum with respect to each of the five standard filtration series: the $p$-power series, the iterated $p$-power series, the lower $p$-series, the Frattini series and the dimension subgroup series. Here the finitely generated Hausdorff spectra of these groups consist of infinitely many rational numbers, and their computation requires a rather technical approach. This result also gives further evidence to the non-existence of a finitely generated pro-$p$ group with uncountable finitely generated Hausdorff spectrum.
\end{abstract}

\section{Introduction}

Let $\Gamma$ be a countably based infinite
profinite group and consider a \emph{filtration series} $\mathcal{S}$
of~$\Gamma$, i.e. a descending chain
$\Gamma = \Gamma_0 \ge \Gamma_1 \ge \cdots$ of open normal subgroups ${\Gamma_i \trianglelefteq_\mathrm{o} \Gamma}$ with $\bigcap_i \Gamma_i = 1$.  These open normal subgroups yield a base of neighbourhoods of the identity and
induce a translation-invariant metric on~$\Gamma$ which is given by
$d^\mathcal{S}(x,y) = \inf \left\{ \lvert \Gamma : \Gamma_i \rvert^{-1} \mid x
  \equiv y \pmod{\Gamma_i} \right\}$,
for $x,y \in \Gamma$.  This  gives,  for a subset
$U \subseteq \Gamma$, the \emph{Hausdorff
  dimension} $\hdim_{\Gamma}^\mathcal{S}(U) \in [0,1]$ with respect to the filtration series~$\mathcal{S}$.

Recently there has been much interest concerning Hausdorff dimensions in profinite groups, starting with the pioneering work of Abercrombie~\cite{Abercrombie} and of Barnea and Shalev~\cite{BaSh97}; see for example \cite{AbVi05,BaKl03,BaVa19,Er04,Er10,FeGiGo,FAZR,FeZu14,GGK,Gl14,GoZo,JaKl07,KTZR,Sunic}.
Barnea and Shalev~\cite{BaSh97} proved the following
group-theoretic formula of the Hausdorff dimension with respect to~$\mathcal{S}$ of a
closed subgroup~$H$ of~$\Gamma$ as a logarithmic density:
\begin{equation*}
  \hdim_{\Gamma}^\mathcal{S}(H) =
  \varliminf_{i\to \infty} \frac{\log \lvert H\Gamma_i : \Gamma_i
    \rvert}{\log \lvert \Gamma : \Gamma_i \rvert},
\end{equation*}
where 
$\varliminf_{i \to \infty} a_i$ is the lower limit of a
sequence $(a_i)_{i \in \mathbb{N}}$ in
$\mathbb{R}$.

As observed in~\cite{KTZR}, the Hausdorff dimension function depends on 
the choice of the filtration series~$\mathcal{S}$, hence for a pro-$p$ group~$\Gamma$, where $p$ is a prime, it makes sense to restrict our attention to the five standard filtration series below:
\begin{enumerate}
\item[$\bullet$] the \emph{$p$-power
  series}~$\mathcal{P}$ of $\Gamma$, which is given by
\[
\mathcal{P} \colon \Gamma^{p^i} = \langle x^{p^i} \mid x \in \Gamma
\rangle, \quad i \ge 0;
\]
\item[$\bullet$] the \emph{iterated $p$-power series}~$\mathcal{I}$ of~$\Gamma$, which is defined by 
\[
\mathcal{I} : I_0(\Gamma)=\Gamma,\qquad I_i(\Gamma)=I_{i-1}(\Gamma)^p,\quad \text{for }i\ge 1;
\]
\item[$\bullet$]
the \emph{lower $p$-series}~$\mathcal{L}$ (or lower $p$-central
series) of $\Gamma$, which is given recursively by
\[
  \mathcal{L} \colon P_0(\Gamma) = \Gamma,  %
   \quad  P_i(\Gamma) = P_{i-1}(\Gamma)^p
    \, [P_{i-1}(\Gamma),\Gamma] \quad
    \text{for $i \ge 1$;}
    \]
\item[$\bullet$] the \emph{Frattini series}~$\mathcal{F}$ of $\Gamma$, which is
  given recursively by
  \[
 \qquad \mathcal{F} \colon \Phi_0(\Gamma) = \Gamma, %
   \qquad\Phi_i(\Gamma) = \Phi_{i-1}(\Gamma)^p
    \, [\Phi_{i-1}(\Gamma),\Phi_{i-1}(\Gamma)] \quad \text{for $i \ge 1$;} 
    \]
  \item[$\bullet$] the (modular) \emph{dimension subgroup series}~$\mathcal{D}$ (or
  Jennings series or Zassenhaus series) of $\Gamma$, which is defined recursively
  by
  \[
  \qquad\mathcal{D} \colon D_0(\Gamma) = \Gamma, %
   \qquad D_i(\Gamma) = D_{\lceil i/p \rceil}(\Gamma)^p 
    \prod_{0 \le j <i} [D_j(\Gamma),D_{i-j}(\Gamma)] \quad \text{for $i \ge 1$.}  
\]
\end{enumerate}
Recall
for a pro-2 group~$\Gamma$, the iterated $2$-power series coincides with the Frattini series.

It is often of interest to study the collection of Hausdorff dimensions in a given profinite group~$\Gamma$, which gives rise to the following definition:
the \emph{Hausdorff spectrum} of $\Gamma$, with respect to $\mathcal{S}$,
is
\[
\hspec^\mathcal{S}(\Gamma) = \{ \hdim_{\Gamma}^\mathcal{S}(H) \mid H \le_\mathrm{c} \Gamma\}
\subseteq [0,1],
\]
where $H$ runs through all closed subgroups of~$\Gamma$.  There are two well-known restricted Hausdorff spectra: the normal Hausdorff spectrum and the finitely generated Hausdorff spectrum. Here the
\emph{normal Hausdorff spectrum} of~$\Gamma$ with respect to
$\mathcal{S}$ is
\[
\hspec^{\mathcal{S}}_{\trianglelefteq}(\Gamma) = \{ \hdim^{\mathcal{S}}_{\Gamma}(H) \mid H
\trianglelefteq_\mathrm{c} \Gamma \};
\]
and the \emph{finitely generated
  Hausdorff spectrum}  with respect to
$\mathcal{S}$ is defined as
\[
\hspec^{\mathcal{S}}_{\text{fg}}(\Gamma) = \{ \hdim^{\mathcal{S}}_\Gamma(H) \mid
H \le_\mathrm{c} \Gamma \text{ and } H \text{ finitely generated} \}.
\]

Over the past few years, the study of the normal Hausdorff spectra of finitely generated pro-$p$
groups has received quite a bit of attention. Indeed, the first examples of finitely generated pro-$p$ groups with infinite normal Hausdorff spectra, with respect to the series $\mathcal{P}$, $\mathcal{I}$, $\mathcal{L}$, $\mathcal{F}$, $\mathcal{D}$ were constructed in~\cite{KT}, and the first family of  finitely generated pro-$p$ groups~$\mathfrak{G}(p)$  with full normal Hausdorff spectra $[0,1]$ was constructed in~\cite{HK} and \cite{HT}.

Those pro-$p$ groups~$\mathfrak{G}(p)$ are $2$-generated extensions of an elementary abelian pro-$p$ group by the pro-$p$ wreath product $W= C_p \mathrel{\hat{\wr}} \mathbb{Z}_p=\varprojlim_{k \in \N} C_p \mathrel{\wr} C_{p^k}$. In this paper, we further investigate the pro-$p$ groups~$\mathfrak{G}(p)$ by computing their finitely generated Hausdorff spectra.
As with the  normal Hausdorff spectra, computations of the finitely generated Hausdorff spectra have only recently appeared in the literature; see~\cite{F, KT, GGK}. We prove the following:

\begin{theorem}
\label{thm:main}
For $p$ a prime, the pro-$p$ group $\mathfrak{G}(p)$ satisfies 
\[
\hspec_{\textup{fg}}^{\mathcal{S}}(\mathfrak{G}(p))=   
\{\nicefrac{d^2}{p^{2l}} \mid  l\in \mathbb{N},\, 0\le d\le p^l\big\}  
 \]
 for $\mathcal{S}\in\{\mathcal{L},\mathcal{D},\mathcal{P}, \mathcal{I}, \mathcal{F},\mathcal{M}
\}$. 
\end{theorem}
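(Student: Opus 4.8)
The plan is to parametrise the finitely generated closed subgroups of $\mathfrak{G}(p)$ by a pair of integers $(l,d)$ with $0 \le d \le p^l$ and to show that each such subgroup has Hausdorff dimension exactly $d^2/p^{2l}$ for every one of the six series. Write $A$ for the elementary abelian normal subgroup with $\mathfrak{G}(p)/A \cong W = C_p \mathrel{\hat{\wr}} \Z_p$, let $B$ be the base group of $W$, and let $\pi \colon \mathfrak{G}(p) \twoheadrightarrow W \twoheadrightarrow \Z_p$ be the projection onto the top group. Since a continuous image of a finitely generated pro-$p$ group is finitely generated and the only closed subgroups of $\Z_p$ are $\{0\}$ and the open subgroups $p^l\Z_p$, I would split the argument according to the value of $\pi(H)$ for a finitely generated $H \le_{\mathrm{c}} \mathfrak{G}(p)$.

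The degenerate case $\pi(H) = \{0\}$ is quickly dealt with: here $H \le \ker\pi$, which is an extension of $A$ by $B$ with both $A$ and $B$ abelian of exponent $p$. The image of $H$ in $B$ is a topologically finitely generated closed subgroup of an abelian pro-$p$ group of exponent $p$, hence finite; therefore $H \cap A$ is open in $H$, so finitely generated, and being a closed subgroup of $A$ it too is finite. Thus $H$ is finite and $\hdim_{\mathfrak{G}(p)}^{\mathcal S}(H) = 0$, realising the value $d = 0$. In the main case $\pi(H) = p^l\Z_p$ I would fix $s \in H$ with $\pi(s)$ topologically generating $p^l\Z_p$; conjugation by $s$ then acts on $\ker\pi$ as a shift by $p^l$. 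Identifying $B$ with $\F_p[[t]]$, on which $s$ acts through $s - 1 = t^{p^l}$, displays $B$ as a free module of rank $p^l$ over $R := \F_p[[s]]$; the image $\bar K$ in $B$ of the closed $s$-invariant subgroup $K := H \cap \ker\pi$ is then a finitely generated $R$-submodule, of some rank $d$ with $0 \le d \le p^l$. The essential structural point is that finite generation forces this discrete datum, cutting the continuum of normal submodules that produced the full spectrum $[0,1]$ down to the integers $0,1,\dots,p^l$.

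With $(l,d)$ fixed, I would evaluate the dimension through the Barnea--Shalev formula $\hdim_{\mathfrak{G}(p)}^{\mathcal S}(H) = \varliminf_{i} \log\lvert H\Gamma_i : \Gamma_i\rvert / \log\lvert \mathfrak{G}(p) : \Gamma_i\rvert$. The mechanism producing the square is the following. Since $\ker\pi/A \cong B$ is abelian, one has $[\ker\pi,\ker\pi] \le A$, so the commutator induces an alternating bilinear pairing $B \times B \to A$, and $H \cap A$ contains the image of $\bar K \times \bar K$ under this pairing. As $\bar K$ has $R$-rank $d$, this image has asymptotic density $(d/p^l)^2$ inside $A$. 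The construction of $\mathfrak{G}(p)$ arranges both that the pairing is nondegenerate enough for the density $(d/p^l)^2$ actually to be attained and that $A$ is the dominant infinite layer, its finite quotients growing strictly faster than those of $B$; consequently the contribution of $B$ to the logarithmic density is asymptotically negligible and the dimension equals $d^2/p^{2l}$. I expect the honest evaluation of this $\varliminf$ to be the main obstacle: for each series one must compute $\lvert H\Gamma_i : \Gamma_i\rvert$ layer by layer, track how the shift $s$ spreads the generators of $\bar K$ across coordinates, and confirm that the oscillation inherent in the lower limit never depresses the value below $d^2/p^{2l}$ --- in particular that the quadratic count in $A$ is realised along a cofinal sequence of indices.

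It remains to check that the answer is independent of the series and that every admissible value occurs. For independence I would carry out the layer count of the previous paragraph separately for each of $\mathcal L, \mathcal D, \mathcal P, \mathcal I, \mathcal F, \mathcal M$ and observe that, because the dominant layer $A$ is graded compatibly with all six filtrations, the value $d^2/p^{2l}$ emerges in each case; the coincidence of $\mathcal I$ and $\mathcal F$ for $p = 2$ recorded above serves as a consistency check. For realisation I would exhibit, for each admissible pair $(l,d)$, an explicit $2$-generated closed subgroup --- generated by a $p^l$-shift together with a single element whose $B$-component spans an $R$-submodule of rank $d$ --- attaining the dimension $d^2/p^{2l}$. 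Together these steps identify $\hspec_{\textup{fg}}^{\mathcal S}(\mathfrak{G}(p))$ with $\{d^2/p^{2l} \mid l \in \N,\ 0 \le d \le p^l\}$, as claimed.
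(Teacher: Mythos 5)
Your overall strategy coincides with the paper's at the level of ideas: reduce to the image $\bar K$ of $K\cap\ker\pi$ in the base group $B$, use finite generation to force $\bar K$ to be a free rank-$d$ submodule of the rank-$p^l$ free module $B$ over $R=\F_p[[x^{p^l}]]$, and read off $d^2/p^{2l}$ from the alternating commutator pairing into the elementary abelian layer $Z$ (your $A$). Your module-theoretic packaging is a clean substitute for the paper's explicit bookkeeping with the elements $c^*_i$, $z^*_{j,k}=[c^*_j,c^*_k]$ and the count over blocks $B_{r,s}$, and it automatically handles the normalisation (indices pairwise incongruent mod $p^l$) that the paper arranges by hand. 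But two points are genuine gaps. First, your realisation step fails for $d\ge 2$: the closed subgroup generated by the $p^l$-shift together with a \emph{single} further element has $B$-image a cyclic $R$-module, hence of rank at most $1$, so it can only realise $d\in\{0,1\}$. One needs $d$ additional generators with $R$-independent $B$-images, e.g. $\langle x^{p^l},c_{i_1},\dots,c_{i_d}\rangle$ with $i_1,\dots,i_d$ pairwise incongruent modulo $p^l$.

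Second, the claim that "the dominant layer $A$ is graded compatibly with all six filtrations" assumes away most of the work. For $\mathcal{L}$, $\mathcal{D}$, $\mathcal{M}$ the terms $S_k\cap Z$ are indeed spanned by basis elements $z_{m,n}$ and $c_j^{\,2}$, but for $\mathcal{P}$, $\mathcal{F}$ and $\mathcal{I}$ they are not: one must compute $G^{2^k}\cap Z=L_kQ_k$, $\Phi_k(G)$ and $I_k(\mathfrak{G})\cap Z$ explicitly, and these involve "diagonal" generators such as $w_{i,j,k}$ and $[z_{m,n},x^{2^\ell},\dots,x^{2^{k-1}}]$ that are not monomials in the $z_{m,n}$ basis. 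One must then show that the intersection of these extra generators with $K$ removes exactly the proportion $d^2/p^{2l}$ from the numerator that they remove from the denominator (this is the role of $U_1/U_2\to d^2/2^{2l}$ for $\mathcal{F}$, and of the long combinatorial appendix for $\mathcal{I}$, which for odd $p$ requires a treatment separate from the other five series). Finally, the identity $\hdim_G^{\mathcal{S}}(K)=\hdim_Z^{\mathcal{S}|_Z}(K\cap Z)$, which you correctly flag as the main obstacle, is not automatic: it rests on the bound $|KS_i\cap Z:(K\cap Z)(S_i\cap Z)|\le 2^{(2d+2)n_i+1}$, proved by analysing which products $ks$ with $k\in K_H$, $s\in S_H$ can land in $Z$; without it your pairing argument yields only a lower bound for the dimension, not the exact value.
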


\noindent Here $\mathcal{M}$ denotes a natural filtration series that arises from the construction of~$\mathfrak{G}(p)$; see Section~2 for details.

We note that for $p=2$ the construction of the pro-$2$ group~$\mathfrak{G}(2)$ (which was given in~\cite{HT}) in the above family is different from the odd prime case (given in~\cite{HK}); see Section~2 for precise details.

Now to prove Theorem~\ref{thm:main}, we will focus solely on the pro-$2$ group~$\mathfrak{G}(2)$. This is because a more delicate treatment is required for~$\mathfrak{G}(2)$, as opposed to the odd $p$ case. We will see that the proof of Theorem~\ref{thm:main} for the pro-$2$ group~$\mathfrak{G}(2)$ simplifies in a straightforward manner for the pro-$p$ groups~$\mathfrak{G}(p)$ for odd~$p$, except for the iterated $p$-power series~$\mathcal{I}$, which requires a separate treatment. This will be done in Section~\ref{sec:iterated}.

As seen below, the computation of the finitely generated Hausdorff spectra in general requires a greater level of technical machinery, as opposed to the computation of the normal Hausdorff spectra. In particular, precise information about the terms of the filtration series is required. As illustrated in the proof of Theorem~\ref{thm:main}, we introduce the idea of counting in what we call blocks.

In view of the main result of this paper and of other results concerning the finitely generated Hausdorff spectra, it is natural to ask the following:
\begin{problem}
Does there exist a finitely generated \mbox{pro-$p$} group  with uncountable finitely generated Hausdorff spectra with respect to one or several of the five standard filtration series?
\end{problem}

\medskip

\noindent \emph{Organisation}.  
In Section~\ref{sec:3} we recall the construction   of the pro-$p$ groups~$\mathfrak{G}(p)$, for $p$ a prime, and we recall several properties of~$\mathfrak{G}(2)$. In Section~\ref{sec:normal-spectra}, we  determine, as  precisely as possible, the $2$-power series and the Frattini series of~$\mathfrak{G}(2)$, before computing in Section~\ref{sec:fg-spec} the finitely generated Hausdorff spectra of~$\mathfrak{G}(2)$ with respect to~$\mathcal{M}$, $\mathcal{L}$, $\mathcal{D}$,  $\mathcal{P}$ and $\mathcal{F}$. Finally in Section~\ref{sec:iterated},   we compute the finitely generated Hausdorff
spectrum of~$\mathfrak{G}(p)$, for odd primes~$p$, with respect to the iterated $p$-power series~$\mathcal{I}$, which completes the proof of Theorem~\ref{thm:main}.

\medskip

\noindent \textit{Notation.}     All subgroups of profinite
groups are generally taken to be closed subgroups.  We use the notation $\le_\text{o}$ and $\le_\text{c}$ to denote open and closed subgroups respectively. 
Throughout, we  use
left-normed commutators, for example, $[x,y,z] = [[x,y],z]$.






\section{The family of pro-\texorpdfstring{$p$}{p} groups \texorpdfstring{$\mathfrak{G}(p)$}{G(p)}}\label{sec:3}

Let $p$ be any prime. For $k\in\N$, let $\langle \dot x_k\rangle\cong C_{p^k}$ and $\langle \dot y_k\rangle\cong C_p$.
Define
$$W_k=\langle \dot y_k\rangle\wr\langle \dot x_k\rangle\cong B_k\rtimes\langle \dot x_k\rangle$$
where $B_k = \prod_{i = 0}^{p^k-1} \langle {\dot y_k}^{\,\dot  x_k^{\, i}}\rangle\cong C_p^{\, p^k}$.
The structural results for the finite wreath products $W_k$ transfer naturally to the inverse limit $W\cong\varprojlim_k W_k$ 
which is the pro-$p$ wreath product
$$W=\langle \dot x,\dot y\rangle=B\rtimes \langle\dot x\rangle\cong C_p\ \hat{\wr}\ \Z_p$$
with top group $\langle\dot  x\rangle\cong\Z_p$ and base group $B=\prod_{i\in\Z}\langle \dot y^{\dot x^i}\rangle\cong C_p^{\,\aleph_0}$. We refer the reader to~\cite[\S2.4]{KT} for further results concerning the groups $W_k$ and $W$.

Let $F_2=\langle a,b\rangle$ be the free pro-$p$ group on two generators and let $k\in\mathbb{N}$.
There exists a closed normal subgroup $R\trianglelefteq F_2$, respectively $R_k\trianglelefteq F_2$, such that 
\[
F_2/R\cong W, \quad\text{ respectively }\,\,
F_2/R_k\cong W_k=C_p \,\wr \,C_{p^k},
\]
with $a$ corresponding to~$\dot x$, respectively~$\dot x_k$, and $b$ corresponding to~$\dot y$, respectively~$\dot y_k$.
 
Let $Y\ge R$ be the closed normal subgroup of $F_2$ such that $Y/R$ is
the pre-image of $B$  in $F_2/R$, and let  $Y_k\ge R_k$ be the closed normal subgroup of $F_2$ such that  $Y_k/R_k$ is
the pre-image of $B_k$ in $F_2/R_k$.

As mentioned in the introduction, the definition of the pro-$p$ groups~$\mathfrak{G}(p)$, for $p$ an odd prime, is slightly different from the case $p=2$.
For an odd prime~$p$, the pro-$p$ group~$\mathfrak{G}:=\mathfrak{G}(p)$ is defined as follows:
\[
\mathfrak{G}=F_2/N\qquad
\text{where}\quad 
N=[R,Y]Y^p.
\]
For $k\in\N$ we set 
\[
\mathfrak{G}_k=F_2/N_k\qquad
\text{where}\quad 
 N_k=[R_k,Y_k]Y_k^{\,p}\langle a^{p^k}\rangle^{F_2}.
 \]

We denote by $H$ and $Z$  the closed normal subgroups of~$\mathfrak{G}$ corresponding 
to $Y/N$ and $R/N$, and we denote by $H_k$ and $Z_k$ the closed normal subgroups of~$\mathfrak{G}_k$ corresponding 
to  $Y_k/N_k$ and $R_k/N_k$. We denote the images of $a, b$ in~$\mathfrak{G}$, respectively in~$\mathfrak{G}_k$, by $x,y$, respectively $x_k,y_k$, so that  $\mathfrak{G}=\langle x,y\rangle$ and $\mathfrak{G}_k=\langle x_k,y_k\rangle$.

The groups $\mathfrak{G}_k$ are finite for all $k\in\mathbb{N}$ and  they  form an inverse system giving $\varprojlim_k \mathfrak{G}_k=\mathfrak{G}$.
Furthermore  we have $[H,Z]=H^p=1$, respectively $[H_k,Z_k]=H_k^{\,p}=1$.

For the case $p=2$, we set
\[
N=[R,Y]R^2,
\]
 respectively 
 \[
 N_k=[R_k,Y_k]R_k^{\,2}\langle a^{2^k}\rangle^{F_2},
 \]
  and, for convenience we write $G:=\mathfrak{G}(2)$, and define 
  \[
  G=F_2/N, \quad\text{respectively }\,\,G_k=F_2/N_k.
  \]

As in the odd prime case, we denote by $H$ and $Z$  the closed normal subgroups of $G$ corresponding 
to $Y/N$ and $R/N$, and we denote by $H_k$ and $Z_k$ the closed normal subgroups of  $G_k$ corresponding 
to  $Y_k/N_k$ and $R_k/N_k$. Likewise, we denote the images of $a, b$ in~$G$, respectively in~$G_k$, by $x,y$, respectively $x_k,y_k$, so that  $G=\langle x,y\rangle$ and $G_k=\langle x_k,y_k\rangle$.

Similarly, we have that the groups $G_k$ are finite for all $k\in\mathbb{N}$ and that 
$\varprojlim_k G_k=G$.
Here we have $[H,Z]=Z^2=1$, respectively $[H_k,Z_k]=Z_k^{\,2}=1$.

 \smallskip
 
We recall the following definition from~\cite{KT}: for a countably
based infinite \mbox{pro-$p$} group~$\Gamma$, equipped with a filtration series
$\mathcal{S} \colon \Gamma = \Gamma_0 \ge \Gamma_1\ge \cdots$, and a
closed subgroup $H \le_\mathrm{c} \Gamma$, we say that $H$ has \emph{strong
  Hausdorff dimension in $\Gamma$ with respect to $\mathcal{S}$} if
\[
\hdim^{\mathcal{S}}_{\Gamma}(H) = \lim_{i \to \infty} \frac{\log_p \lvert H
  \Gamma_i : \Gamma_i \rvert}{\log_p \lvert \Gamma : \Gamma_i \rvert}
\]
is given by a proper limit. 

From~\cite{HK} and \cite{HT}, we have
 that for these pro-$p$ groups~$\mathfrak{G}(p)$, for $p$ a prime, the subgroup~$Z$ has strong Hausdorff dimension~$1$ with respect to the  filtration series $\mathcal{M}$, $\mathcal{L}$, $\mathcal{D}$, $\mathcal{P}$, $\mathcal{I}$, and $\mathcal{F}$. Here we recall that $\mathcal{M}:M_0\ge M_1\ge\cdots$ stands for the natural filtration series of~$\mathfrak{G}(p)$ where each $M_i$ is the subgroup of~$\mathfrak{G}(p)$ corresponding to~$N_i/N$, and here $N_0=F_2$.

\smallskip

Recall that for $p=2$, we write $G=\mathfrak{G}(2)$. As observed above, owing to the fact that the subgroup~$H$ of~$G$ is not of exponent~$2$, a more delicate treatment is required for~$G$, as opposed to the odd $p$ case. Also, as mentioned in the introduction, it turns out that the proof of the finitely generated Hausdorff spectra for the pro-$2$ group~$G$, with respect to $\mathcal{M}$, $\mathcal{L}$, $\mathcal{D}$, $\mathcal{P}$, $\mathcal{F}$, generalises immediately to the pro-$p$ groups~$\mathfrak{G}(p)$, for odd primes~$p$. Therefore for the rest of this section and the next two, we focus solely on the pro-$2$ group~$G$, and only in Section~\ref{sec:iterated} do we explicitly consider the pro-$p$ groups~$\mathfrak{G}(p)$, for odd $p$, for computing the finitely generated Hausdorff spectrum of~$\mathfrak{G}(p)$ with respect to the iterated $p$-power series~$\mathcal{I}$.

\subsection{Properties of \texorpdfstring{$G$}{G}}
\label{subsection properties of G_k}

Here we recall some properties of the pro-$2$ group~$G$ from \cite[Sec.~3]{HT}.

For $k\in\N$, the logarithmic order of $G_k$ is
$$
\log_2|G_k|=k+2^{k+1}+\binom{2^k}{2}= k+2^{2k-1}+2^{k+1}-2^{k-1},
$$
and the nilpotency class of $G_k$ is $2^{k+1}-1$.
Also we have that $H_k^{\,2}=Z_k$ and, consequently, that $H^2=Z$. In particular, the exponent of $H_k$, and of $H$, is $4$. 

Further
\[
\log_2|Z:\gamma_i(G)\cap Z|=\begin{cases}
\dfrac{i^2-1}{4}  & \text{ if $i$ is odd},\\
& \vspace{-0.8em}\\
\dfrac{i^2}{4} &  \text{ if $i$ is even}.
\end{cases}
\]

In the sequel, we need the following notation, which will be used frequently in the paper.
We will denote $c_1=y$ and $c_i=[y,x,\overset{i-1}{\ldots},x]$ for $i\in\mathbb{N}_{\ge 2}$.
For $k\in\N$, we will use the same notation for the corresponding elements in the group $G_k$. Thus, we will also write $c_1=y_k$ and $c_i=[y_k,x_k,\overset{i-1}{\ldots},x_k]$ for every $i\in\mathbb{N}_{\ge 2}$, 
when working in the groups~$G_k$. 
It will be clear from the context whether we mean $G$ or $G_k$.

We further write $z_{m,n}=[c_m,c_n]$ for every $m,n\in\N$, and we note the following descriptions for $H$ and $Z$: 
\[
H=\langle c_i\mid i\in \mathbb{N}\rangle, \qquad\text{and}\qquad  
Z=\langle c_l^{\,2},z_{m,n}\mid  l,m,n\in\mathbb{N}   \text{ with } n<m \rangle;
\]
also for every $i\ge 2$, 
$$
\gamma_i(G)\cap Z=\langle c_l^{\,2},z_{m,n}\mid l\ge i,\,\,1\le n<m,\,\, m+n\ge i \rangle.
$$

\smallskip

We recall a useful result from~\cite[Cor.~3.5]{HT}:
\begin{lemma}
\label{lemma p^k}
In the group~$G$, for $m,n\in \mathbb{N}$ we have
$$
[z_{m,n},x^{2^k}]=z_{m+{2^k},n}z_{m,n+2^k}z_{m+2^k,n+2^k}\quad \text{for every }k\in\mathbb{N}_0.
$$
\end{lemma}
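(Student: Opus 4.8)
The plan is to exploit the abelian-by-central structure of $H$: since $H/Z\cong B$ is abelian we have $[H,H]\le Z$, and since $Z$ is central in $H$ (as $[H,Z]=1$) of exponent $2$ (as $Z^2=1$), the commutator restricted to $H$ descends to a \emph{bilinear} pairing
\[
H/Z\times H/Z\longrightarrow Z,\qquad (\bar u,\bar v)\longmapsto [u,v].
\]
In particular $z_{m,n}=[c_m,c_n]\in Z$, and the value $z_{m,n}^{x^{2^k}}=[c_m^{x^{2^k}},c_n^{x^{2^k}}]$ depends only on the images of $c_m^{x^{2^k}}$ and $c_n^{x^{2^k}}$ in $H/Z$. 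So the whole computation reduces to understanding how $x^{2^k}$ acts on $H/Z$.

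To do this I would work in the $\mathbb{F}_2$-vector space $H/Z$ written additively. The relation $c_i^x=c_ic_{i+1}$ says that conjugation by $x$ induces the automorphism $\sigma=1+\tau$, where $\tau=\sigma-\mathrm{id}$ is the shift $\bar c_i\mapsto\bar c_{i+1}$, so that $\tau^{j}(\bar c_m)=\bar c_{m+j}$. Because we are over $\mathbb{F}_2$, the binomial coefficients $\binom{2^k}{j}$ vanish for $0<j<2^k$, whence
\[
\sigma^{2^k}=(1+\tau)^{2^k}=1+\tau^{2^k}.
\]
Evaluating at $\bar c_m$ gives the key congruence $c_m^{x^{2^k}}\equiv c_mc_{m+2^k}\pmod Z$ for every $m$. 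This Frobenius-type collapse — the fact that raising to the $2^k$-th power kills all the intermediate shift terms — is the crux of the argument, and I expect it to be the one point that genuinely needs care; everything else is formal.

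With the congruence in hand I would substitute into the bilinear pairing. Using bilinearity and $z_{m,n}^{-1}=z_{m,n}$ (exponent $2$), this yields
\[
z_{m,n}^{x^{2^k}}=[c_mc_{m+2^k},\,c_nc_{n+2^k}]=z_{m,n}\,z_{m,n+2^k}\,z_{m+2^k,n}\,z_{m+2^k,n+2^k},
\]
and therefore
\[
[z_{m,n},x^{2^k}]=z_{m,n}^{-1}z_{m,n}^{x^{2^k}}=z_{m+2^k,n}\,z_{m,n+2^k}\,z_{m+2^k,n+2^k},
\]
which is the assertion (the factors commute since $Z$ is abelian).

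As an alternative that avoids the module language, one can argue by induction on $k$. The base case $k=0$ is the bilinear expansion above with $2^k$ replaced by $1$, giving $[z_{m,n},x]=z_{m+1,n}z_{m,n+1}z_{m+1,n+1}$. For the inductive step I would write $x^{2^{k+1}}=(x^{2^k})^2$ and apply the identity $[u,v^2]=[u,v]\,[u,v]^{v}$ together with $z^{x^{2^k}}=z\,[z,x^{2^k}]$ to each of the three factors supplied by the induction hypothesis; working modulo $Z^2=1$, the resulting twelve terms cancel in pairs and leave precisely the three blocks $z_{m+2^{k+1},n}$, $z_{m,n+2^{k+1}}$, $z_{m+2^{k+1},n+2^{k+1}}$. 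Here the main obstacle is purely the bookkeeping of the cancellations, which is exactly what the conceptual $\mathbb{F}_2$-linear argument streamlines away.
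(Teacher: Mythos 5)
Your argument is correct. Note that the paper does not actually prove this lemma: it is recalled verbatim from \cite[Cor.~3.5]{HT}, so there is no in-text proof to compare against; your write-up is a valid self-contained derivation. The key points all check out: since $[H,H]\le Z$, $[H,Z]=1$ and $Z^2=1$, the commutator does induce a symmetric $\F_2$-bilinear pairing on $H/Z$; the operator $\tau=\sigma-\mathrm{id}$ is automatically well defined and linear as a difference of linear maps, and $\tau(\bar c_i)=\overline{[c_i,x]}=\bar c_{i+1}$ by the definition of the $c_i$; and $(1+\tau)^{2^k}=1+\tau^{2^k}$ over $\F_2$ gives $c_m^{\,x^{2^k}}\equiv c_mc_{m+2^k}\pmod Z$, after which the expansion of $[c_mc_{m+2^k},c_nc_{n+2^k}]$ and the cancellation of $z_{m,n}$ (using $z_{m,n}^{-1}=z_{m,n}$ and $z_{m,n}=z_{n,m}$) yield exactly the stated identity. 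This is essentially the same mechanism the authors use throughout (compare the identity $[w_{i,j,k},x]=w_{i+1,j,k}w_{i,j+1,k}w_{i+1,j+1,k}$ derived ``as in Lemma~\ref{lemma p^k}''), just packaged more conceptually; the inductive alternative you sketch also goes through, with the only caveat being that your count of ``twelve terms cancelling in pairs'' is slightly off as bookkeeping (one gets $z_{m+2^k,n+2^k}^2$, $z_{m+2^{k+1},n+2^k}^2$ and $z_{m+2^k,n+2^{k+1}}^2$ vanishing, plus $A^2=1$), though the outcome is the same.
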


Lastly, we recall terms of the lower 2-series and the dimension subgroup series of~$G_k$ from \cite[Prop.~3.10 and 3.11]{HT}:
For $k\in\N$, the length of the lower $2$-series of~$G_k$ is $2^{k+1}-1$ and
\begin{align*}
 P_0(G_k)&=G_k,\\
 P_1(G_k)&=\langle x_k^{\,2},y_k^{\,2}\rangle \gamma_2(G_k),
\end{align*}
and
$$
P_i(G_k)=
\begin{cases}
\langle x_k^{\,2^i}, c_i^{\,2}\rangle\gamma_{i+1}(G_k) &\text{for }2\le i\le 2^{k-1},\\
\langle x_k^{\,2^i}\rangle\gamma_{i+1}(G_k) &\text{for }2^{k-1}+1\le i\le 2^{k+1}-1.
\end{cases}
$$
For $k\in\N$, the length of the dimension subgroup series of $G_k$ is $2^{k+1}$ and
$$
D_i(G_k)=\langle x_k^{\,2^{l(i+1)}}\rangle\gamma_{\lceil \nicefrac{(i+1)}{2}\rceil}(G_k)^2\gamma_{i+1}(G_k) \quad \text{for } 0\le i\le 2^{k+1}-1,
$$
where $l(i+1)=\lceil\log_2 (i+1)\rceil$.


\section{The \texorpdfstring{$2$}{2}-power series and the Frattini series of \texorpdfstring{$G$}{G}}\label{sec:normal-spectra}

Below we will precisely determine the terms $G^{2^k}\cap Z$  and $\Phi_k(G)$. For convenience, we recall the following standard commutator identities. 

\begin{lemma}
\label{lemma commutator identities}
Let $\Gamma=\langle a, b\rangle$ be a group, let $p$ be any prime, and let $r\in\N$.
For $u,v\in \Gamma$, let $K(u,v)$ denote the normal closure in $\Gamma$ of (i) all commutators in $\{u,v\}$ of weight at least $p^r$ that have weight at least $2$ in $v$, together with (ii) the $p^{r-s+1}$th
powers of all commutators in $\{u,v\}$ of weight less than $p^s$ and of weight at least $2$ in $v$ for $1\le s\le r$.
Then
\begin{align*}
 (ab)^{p^r}&\equiv_{K(a,b)}a^{p^r}b^{p^r}[b,a]^{\binom{p^r}{2}}[b,a,a]^{\binom{p^r}{3}}\cdots[b,a,
 \overset{p^r-2}{\ldots},a]^{\binom{p^r}{p^r-1}}[b,a,\overset{p^r-1}{\ldots},a],
 \\
 [a^{p^r},b]&\equiv_{K(a,[a,b])}[a,b]^{p^r}[a,b,a]^{\binom{p^r}{2}}\cdots[a,b,a,
 \overset{p^r-2}{\ldots},a]^{\binom{p^r}{p^r-1}}[a,b,a,\overset{p^r-1}{\ldots},a].
 \end{align*}
\end{lemma}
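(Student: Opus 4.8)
The plan is to prove both congruences in the free pro-$p$ group $F=\langle a,b\rangle$ and then transport them to an arbitrary $\Gamma=\langle a,b\rangle$ through the canonical epimorphism $F\to\Gamma$, under which a congruence modulo the normal closure of a set of words is preserved. The second identity will be derived from the first. Writing $a^b=a[a,b]$, one has
\[
[a^{p^r},b]=a^{-p^r}\,(a^{p^r})^b=a^{-p^r}\bigl(a[a,b]\bigr)^{p^r},
\]
so applying the first identity to the pair $(u,v)=(a,[a,b])$ and cancelling the leading $a^{p^r}$ against $a^{-p^r}$ yields exactly the right-hand side of the second congruence, with left-normed commutators $[[a,b],a,\overset{j-1}{\ldots},a]=[a,b,a,\overset{j-1}{\ldots},a]$ and error group $K(a,[a,b])$, which is literally the error group named in the statement. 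Hence everything reduces to the first identity.

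For the first identity, set $c_1=b$ and $c_{j}=[b,a,\overset{j-1}{\ldots},a]$ for $j\ge2$, so that $c_j$ has weight $j$ and weight $1$ in $b$, and $c_j^{\,a}=c_jc_{j+1}$. The \emph{main} terms are extracted by working modulo the normal closure $M$ of all commutators of weight at least $2$ in $b$. Modulo $M$ the subgroup $A=\langle c_j\mid j\ge1\rangle$ is abelian (since $[c_i,c_j]$ has weight $2$ in $b$) and is normalised by $a$, which acts on it as $T=I+N$, where $N$ is the shift $c_j\mapsto c_{j+1}$. Writing $A$ additively and using $ab=a\,c_1$ together with abelianness, one computes
\[
(ab)^{p^r}\equiv a^{p^r}\sum_{i=0}^{p^r-1}T^i(c_1)=a^{p^r}\sum_{k\ge1}\binom{p^r}{k}N^{k-1}(c_1)=a^{p^r}\sum_{k\ge1}\binom{p^r}{k}c_k\pmod M,
\]
where the middle equality is the formal polynomial identity $\sum_{i=0}^{n-1}(1+x)^i=\sum_{k\ge1}\binom nk x^{k-1}$. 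As $N^{k-1}(c_1)=c_k$, the right-hand side is the displayed product $a^{p^r}b^{p^r}[b,a]^{\binom{p^r}{2}}\cdots[b,a,\overset{p^r-1}{\ldots},a]$, and the factors that survive are precisely those of weight $1$ in $b$, which never lie in $K(a,b)$.

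It remains to upgrade this congruence from modulo $M$ to modulo the smaller group $K(a,b)$, and this is the main obstacle. Concretely, one must show that in the full collection of $(ab)^{p^r}$ every commutator $c$ of weight at least $2$ in $b$ lies in $K(a,b)$: if $c$ has weight at least $p^r$ this is immediate from part (i) of the definition, while if $c$ has weight $w<p^r$, say $p^{s-1}\le w<p^s$, one must prove that $c$ is collected with exponent divisible by $p^{r-s+1}$, so that part (ii) applies. I would establish this by running P.~Hall's collection process while filtering $F$ by weight in $b$ and tracking $p$-adic valuations of the exponents produced at each stage; the arithmetic input is Kummer's formula $v_p\binom{p^r}{j}=r-v_p(j)\ge r-(s-1)$ for $j\le w<p^s$, together with an induction on $r$ showing that the valuations of the junk exponents inherit the required lower bound. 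This valuation bookkeeping is the only genuinely technical point; the reductions above and the identification of the surviving main terms are formal.
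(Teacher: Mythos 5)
The paper offers no proof of this lemma at all: it is introduced with ``we recall the following standard commutator identities'' and used as a black box (it is essentially P.~Hall's collection formula with the exponents of the collected commutators controlled $p$-adically; see e.g.\ Hall's 1934 paper or Leedham-Green--McKay). So there is no argument in the paper to compare yours against, and your proposal has to be judged on its own. The parts you actually carry out are correct: passing to the free group and pushing the congruence forward is legitimate; the derivation of the second identity from the first via $[a^{p^r},b]=a^{-p^r}(a[a,b])^{p^r}$ is clean (using that $K(a,[a,b])$ is normal in $\Gamma$, so conjugating by $a^{p^r}$ is harmless, and that the normal closure inside $\langle a,[a,b]\rangle$ sits inside the one taken in $\Gamma$); and the computation of the main terms modulo $M$, via the action $T=I+N$ on the abelianised normal closure of $b$ and the identity $\sum_{i=0}^{n-1}(1+x)^i=\sum_{k\ge1}\binom{n}{k}x^{k-1}$, correctly produces the displayed right-hand side.

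The problem is that everything you prove is the easy half, and the step you defer is the entire content of the lemma. The statement modulo $M$ says nothing about $K(a,b)$; the lemma's whole point is the refined control of the commutators of weight $\ge 2$ in $b$. What you need, and only name, is Hall's theorem that in the collection of $(ab)^{n}$ every basic commutator of weight $w$ is collected with exponent lying in $\binom{n}{1}\mathbb{Z}+\cdots+\binom{n}{w}\mathbb{Z}$; only then does $v_p\binom{p^r}{j}=r-v_p(j)\ge r-s+1$ for $j\le w<p^s$ give part (ii). On top of that, after collection the weight-$\ge2$-in-$b$ commutators are interleaved with the main terms $c_j^{\binom{p^r}{j}}$, so one must also justify reordering: each transposition creates new commutators of strictly larger weight in $b$ whose exponents must again be shown to have the required valuation (here $v_p(e_ie_j)\ge 2(r-s+1)$ saves you, but this has to be run as an induction on weight, not merely asserted). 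As written, ``running P.~Hall's collection process while \ldots tracking $p$-adic valuations'' is a plan for the proof, not a proof; if you intend to rely on Hall's exponent theorem as a citable classical result, say so explicitly and cite it, because without that the argument is incomplete exactly where the lemma is nontrivial.
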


\medskip

Next, for $i,j,k\in\N$, we define the elements
$$
w_{i,j,k}=[z_{i+1,j},x,\overset{2^k-2}
\ldots,x][z_{i+2,j+1},x,\overset{2^k-3}
\ldots,x]\cdots [z_{i+2^k-2,j+2^k-3},x]z_{i+2^k-1,j+2^k-2},
$$
and  it follows by routine commutator calculus that 
\begin{equation}
\label{eq split}
[c_ic_j,x,\overset{2^k-1}{\ldots},x]=[c_i,x,\overset{2^k-1}{\ldots},x][c_j,x,\overset{2^k-1}{\ldots},x]w_{i,j,k}.
\end{equation}

 For $k\in \N$, we define the normal subgroup
$$
L_k=\left\langle w_{i,j,k},\, z_{m,n}\mid i,j,n\in\mathbb{N},\,m\ge 2^k
\right\rangle^G.
$$
The significance of this subgroup becomes clear in the next lemma, for which we need the following notation: for every $h\in H$, and referring to Lemma~\ref{lemma commutator identities}, we define $d_k(h)\in Z$ as
    $$
    (xh)^{2^k}=x^{2^k}[h,x,\overset{2^{k}-1}{\ldots},x]d_k(h).
    $$
Additionally, for all $k\in \mathbb{N}$ let
$$Q_k=\langle c_l^{\,2} \mid l\ge 2^{k-1}\rangle.
$$

\begin{lemma}
\label{lemma L_k}
In the pro-$2$ group $G$, for $k\in\N,$
\begin{enumerate}
    \item[(i)] for every $z\in Z$, we have
    $$
    [z,x,\overset{2^k-1}{\ldots},x]\in L_kQ_k;
    $$

    \item[(ii)] for $h_1,h_2\in H$, we have
    $$
    [h_1h_2,x,\overset{2^k-1}{\ldots},x]\equiv[h_1,x,{\overset{2^k-1}\ldots},x][h_2,x,\overset{2^k-1}{\ldots},x]\pmod{L_k};
    $$
    
    \item[(iii)] for every $h_1,h_2\in H$, we have
    $$
    d_k(h_1h_2)\equiv d_k(h_1)d_k(h_2)\pmod{L_k}.
    $$
\end{enumerate}
\end{lemma}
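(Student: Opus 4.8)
The plan is to treat the three parts in the order (i), (ii), (iii), the first being the technical heart and the third the main obstacle. Throughout I exploit that, since $[H,Z]=Z^2=1$, the subgroup $Z$ is an elementary abelian pro-$2$ group centralised by $H$; thus $Z$ is an $\F_2$-vector space on which conjugation by $x$ acts, and the map $\partial\colon z\mapsto[z,x]$ is an $\F_2$-linear endomorphism of $Z$. By Lemma~\ref{lemma p^k} with $k=0$ it is given on generators by $\partial(z_{m,n})=z_{m+1,n}z_{m,n+1}z_{m+1,n+1}$, and a short class-$2$ computation (using $(ab)^2=a^2b^2[b,a]$) gives $\partial(c_l^{\,2})=c_{l+1}^{\,2}z_{l+1,l}$. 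Since $[z,x,\overset{2^k-1}{\ldots},x]=\partial^{2^k-1}(z)$ for $z\in Z$, all three statements become assertions about the operator $\partial^{2^k-1}$ and the subgroups $L_k,Q_k$.

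For (i), linearity of $\partial^{2^k-1}$ reduces the claim to the two families of generators $z_{m,n}$ and $c_l^{\,2}$ of $Z$. The key device is to record $z_{m,n}$ by the monomial $X^mY^n$, so that $\partial$ becomes multiplication by $f=X+Y+XY$ over $\F_2$; since the $w_{i,j,k}$ are themselves built from $\partial$-iterates of the $z$'s, one computes $w_{i,j,k}\leftrightarrow X^{i+1}Y^jP$, where $P=\sum_{s=0}^{2^k-2}f^{2^k-2-s}(XY)^s$. The identity $(X+Y)P=f^{2^k-1}+(XY)^{2^k-1}$ then yields, for $m\ge 2$,
\[
\partial^{2^k-1}(z_{m,n})=w_{m,n,k}\,w_{m-1,n+1,k}\,z_{m+2^k-1,\,n+2^k-1},
\]
each factor of which lies in $L_k$ (the two $w$'s by definition, the last $z$ because its indices are $\ge 2^k$); the case $m=1$ follows from the symmetry $z_{1,n}=z_{n,1}$. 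An analogous telescoping of $\partial(c_l^{\,2})=c_{l+1}^{\,2}z_{l+1,l}$ gives $\partial^{2^k-1}(c_l^{\,2})=w_{l,l,k}\,c_{l+2^k-1}^{\,2}\in L_kQ_k$. This generating-function bookkeeping is the cleanest way to verify what is otherwise a lengthy commutator calculation.

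For (ii), write $\phi(h)=[h,x,\overset{2^k-1}{\ldots},x]$. First, if $z\in Z$ then $\phi(zh)=\phi(z)\phi(h)$ holds exactly, because $z$ and all its $x$-commutators remain in the centraliser $Z$ of $H$; so central factors split off and it suffices to treat products of the $c_i$. It is then enough to prove $\phi(c_ih)\equiv\phi(c_i)\phi(h)\pmod{L_k}$ for a single generator $c_i$ and arbitrary $h$, since this makes $h\mapsto\phi(h)L_k$ multiplicative by the usual telescoping $\phi(c_{i_1}\cdots c_{i_r}h_2)\equiv\phi(c_{i_1})\cdots\phi(c_{i_r})\phi(h_2)$. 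The single-generator case is handled by the same collection process that yields \eqref{eq split}: on expanding $[c_ih,x,\overset{2^k-1}{\ldots},x]$, the defect beyond $\phi(c_i)\phi(h)$ is built from the cross-commutators $[c_{i+t},[h,x,\overset{t-1}{\ldots},x]]$, which for $h=c_{j_1}\cdots c_{j_s}$ equal $\prod_a z_{i+t,\,j_a+t-1}$ by bilinearity of commutators in the class-$2$ group $H$; collecting over $t$ gives exactly $\prod_a w_{i,j_a,k}\in L_k$.

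Part (iii) is the main obstacle. Applying Lemma~\ref{lemma commutator identities} to $(xh)^{2^k}$ and cancelling the leading term $\phi(h)$ identifies
\[
d_k(h)\equiv h^{2^k}\prod_{s=1}^{2^k-2}[h,x,\overset{s}{\ldots},x]^{\binom{2^k}{s+1}}\pmod{K(x,h)},
\]
and one must show each factor is additive in $h$ modulo $L_k$. The power $h^{2^k}$ is trivial for $k\ge 2$ (as $H$ has exponent $4$), while for $k=1$ the defect is $[h_2,h_1]\in[H,H]\subseteq L_1$. For the remaining factors, part (ii) together with the class-$2$ squaring formula reduces the defect to $[\psi_s(h_2),\psi_s(h_1)]^{\binom{2^k}{s+1}/2}$, where $\psi_s(h)=[h,x,\overset{s}{\ldots},x]$; here Kummer's theorem, in the form $v_2\!\big(\binom{2^k}{s+1}\big)=k-v_2(s+1)$ for the $2$-adic valuation $v_2$, shows the exponent is odd only for $s=2^{k-1}-1$, so every other term dies because $Z^2=1$. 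The one surviving commutator produces elements $z_{p,q}$ with $p,q\ge 2^{k-1}$, and the crux is to place these in $L_k$: those with an index $\ge 2^k$ are there by definition, and the mid-range ones are supplied, modulo higher-index $z$'s, by the diagonal generators $w_{l,l,k}$ already seen in the $c_l^{\,2}$ computation of part~(i). The $K(x,h)$-corrections are controlled by the same index bookkeeping, and assembling all of these estimates is where the bulk of the technical work lies.
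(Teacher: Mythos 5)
Parts (i) and (ii) of your proposal are correct and are, in substance, the paper's own argument in generating-function clothing: your factorisation $\partial^{2^k-1}(z_{m,n})=w_{m,n,k}\,w_{m-1,n+1,k}\,z_{m+2^k-1,n+2^k-1}$ is exactly what one obtains by combining the paper's identity \eqref{eq:normal-w} with $[w_{i,j,k},x]=w_{i+1,j,k}w_{i,j+1,k}w_{i+1,j+1,k}$, and your telescoping in (ii) is the paper's bilinear form $w(h_1,h_2)$.

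Part (iii), however, has a genuine gap, located precisely where you place ``the bulk of the technical work''. There are two problems. First, the intermediate claim that the surviving commutator $[\psi_{2^{k-1}-1}(h_2),\psi_{2^{k-1}-1}(h_1)]$ lies in $L_k$ is false: for $h_1=c_i$, $h_2=c_j$ this element is $z_{j+2^{k-1}-1,\,i+2^{k-1}-1}$ modulo higher terms, and such ``mid-range'' elements $z_{p,q}$ with $2^{k-1}\le q<p<2^k$ are in general \emph{not} in $L_kQ_k$. Concretely, for $k=3$ one computes, modulo $\langle z_{m,n}\mid \max(m,n)\ge 8\rangle$, that the part of $L_3$ meeting the span of $\{z_{p,q}\mid q<p\le 7\}$ is generated by $w_{1,1,3}\equiv z_{5,4}z_{6,4}z_{7,4}z_{7,6}\cdot z_{6,3}z_{7,2}$, $w_{2,2,3}\equiv z_{6,5}z_{7,4}z_{7,5}$ and $w_{3,3,3}\equiv z_{7,6}$ (all other $w_{i,j,3}$ reduce to products of these); since the terms $z_{6,3}$ and $z_{7,2}$ in $w_{1,1,3}$ cannot be cancelled, $z_{5,4}\notin L_3Q_3$. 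A dimension count makes the same point in general: there are about $2^{2k-3}$ mid-range $z_{p,q}$, whereas $\log_2|L_kQ_k:M_k\cap Z|\le 2^{k+1}+2^{k-1}$ by the computation in Corollary~\ref{cor:M-P}. Second, the congruence you write for $d_k(h)$ only determines $d_k(h)$ modulo $K(x,h)\cap Z$, and $K(x,h)\cap Z$ is not contained in $L_kQ_k$ (it contains products of $z_{a,b}$ with $a+b\ge 2^k$ but $\min(a,b)$ small, e.g.\ of type $z_{2^k-1,1}$). Hence the deferred ``$K(x,h)$-corrections'' are not separately negligible: the true defect $d_k(h_1h_2)d_k(h_1)^{-1}d_k(h_2)^{-1}$ does lie in $L_k$, but only because the correction terms cancel against your main term, and establishing that cancellation is the entire content of the claim. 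The paper sidesteps both issues by never expanding $d_k$ at all: it computes $(xh_2h_1)^{2^k}$ via $(xh_2)^{2^k}$, identifies the exact defect as $\zeta(h_1,h_2)=\prod_{\ell=0}^{2^k-2}[h_1^{\,x},x,\overset{\ell}{\ldots},x,h_2,x,\overset{2^k-2-\ell}{\ldots},x]$, and places $\zeta(c_i,c_j)$ in $L_k$ using the symmetry $\zeta(h_1,h_2)\equiv\zeta(h_2,h_1)$, the diagonal vanishing $\zeta(h,h)\equiv 1$, and the recursion $[\zeta(c_i,c_j),x]\equiv\zeta(c_{i+1},c_j)\pmod{L_k}$. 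Some such global mechanism is needed; the termwise index bookkeeping you propose cannot close the argument.
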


\begin{proof}
(i) Take first $m,n\in \mathbb{N}$ with $m>n$. Since $L_k$ is normal in $G$, we have $[w_{m-1,n,k},x]\in L_k$.
Note also that
\begin{equation}\label{eq:normal-w}
[w_{m-1,n,k},x]=[z_{m,n},x,\overset{2^{k}-1}{\ldots},x]w_{m,n+1,k}z_{m+2^k-1,n+2^k-1}^{-1},
\end{equation}
so we obtain $[z_{m,n},x,\overset{2^k-1}{\ldots},x]\in L_k$.

From~\eqref{eq split} we have
$$
[c_i^{\,2},x,\overset{2^{k}-1}{\ldots},x]=[c_i,x,\overset{2^{k}-1}{\ldots},x]^2w_{i,i,k}\in L_k Q_k
$$
for every $i\in\N$, so part (i) follows.

(ii) For every $h_1,h_2\in H$ write
\begin{align*}
&w(h_1,h_2)\\
&=[[h_1,x,h_2],x,\overset{2^k-2}
\ldots,x][[h_1,x,x,[h_2,x]],x,\overset{2^k-3}
\ldots,x]\cdots [h_1,x,\overset{2^k-1}{\ldots},x,[h_2,x,\overset{2^k-2}{\ldots},x]],
\end{align*}
so that $w(c_i,c_j)=w_{i,j,k}$.
It is easy to see that $w$ is bilinear
and that $w(z,h)=w(h,z)=1$ for every $h\in H$ and $z\in Z$.
Moreover, routine computations give
$$
[h_1h_2,x,\overset{2^{k}-1}{\ldots},x]=[h_1,x,\overset{2^{k}-1}{\ldots},x][h_2,x,\overset{2^{k}-1}{\ldots},x]w(h_1,h_2).
$$
Since every element in $H$ can be written in the form $c_{i_1}\cdots c_{i_n}z$ for some $n\in \N$ and $z\in Z$, the result follows.

(iii) From Lemma~\ref{lemma commutator identities} we easily deduce that for every $h\in H$ and $z\in Z$ we have 
$d_k(z)=1$ and $d_k(hz)=d_k(h)$.
Now, for $h_1,h_2\in H$, we have
\begin{equation}
\label{eq: xh2h1}
\begin{split}
(xh_2h_1)^{2^k}&=(xh_2)^{2^k}[h_1,xh_2,\overset{2^k-1}{\ldots},xh_2]d_k(h_1)\\
&=x^{2^k}[h_2,x,\overset{2^k-1}{\ldots},x][h_1,xh_2,\overset{2^k-1}{\ldots},xh_2]d_k(h_1)d_k(h_2).
\end{split}
\end{equation}
Here, the first equality holds since $d_k(h_1)$ is the product of, on the one hand, the element $[h_1,x,\overset{2^{k-1}-1}{\ldots},x]^2$, and, on the other hand, commutators of weight~$2$ in~$h_1$, so writing $xh_2$ instead of~$x$ does not change the value of $d_k(h_1)$.
Routine computations give
$$
[h_1,xh_2,\overset{2^k-1}{\ldots},xh_2]=[h_1,x,\overset{2^k-1}{\ldots},x]\prod_{\ell=0}^{2^k-2} [h_1^{\,x},x,\overset{\ell}{\ldots},x,h_2,x,\overset{2^k-2-\ell}{\ldots},x].
$$
For $h_1,h_2\in H$ we write $\zeta(h_1,h_2)=\prod_{\ell=0}^{2^k-2} [h_1^{\,x},x,\overset{\ell}{\ldots},x,h_2,x,\overset{2^k-2-\ell}{\ldots},x]\in Z,
$ so that
$$
(xh_2h_1)^{2^k}=x^{2^k}[h_2,x,\overset{2^k-1}{\ldots},x][h_1,x,\overset{2^k-1}{\ldots},x]d_k(h_1)d_k(h_2)\zeta(h_1,h_2).
$$
This shows, by part (ii), that $d_k(h_1h_2)\equiv d_k(h_1)d_k(h_2)\zeta(h_1,h_2)\pmod{L_k}$, so
$$
\zeta(h_1,h_2)\equiv\zeta(h_2,h_1)\hspace{-6pt}\pmod{L_k},\ \ \text{ and }\ \ \zeta(h,h)\equiv 1\hspace{-6pt}\pmod{L_k}
$$
for every $h\in H$.
Next, notice that
\begin{equation}
\label{equation zeta(ci,cj)}
\begin{split}
[\zeta(c_i,c_j),x]
&=\prod_{\ell=0}^{2^k-2} [c_i^{\,x},x,\overset{\ell}{\ldots},x,c_j,x,\overset{2^k-1-\ell}{\ldots},x]\\
&=[c_i^{\,x},c_j,x,\overset{2^{k}-1}{\ldots},x]\zeta(c_{i+1},c_j)([c_i^{\,x},x,\overset{2^k-1}{\ldots},x,c_j])^{-1}\\
&\equiv\zeta(c_{i+1},c_j)\pmod{L_k}.\end{split}
\end{equation}
Since $\zeta(c_i,c_i)\in L_k$ and $\zeta(c_i,c_j)\equiv\zeta(c_j,c_i)\pmod{L_k}$, this shows that $\zeta(c_{i},c_j)\in L_k$ for every $i,j\in\N$.
This yields
$d_k(h_1h_2)\equiv d_k(h_1)d_k(h_2)\pmod{L_k}$.
\end{proof}

\begin{proposition}\label{prop:lower-bound-P}
For $k\in\N$,
the pro-2 group~$G$ satisfies
\begin{align*}
G^{2^k}\cap Z = L_kQ_k.
\end{align*}
\end{proposition}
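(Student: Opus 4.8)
The plan is to prove the two inclusions $L_kQ_k\subseteq G^{2^k}\cap Z$ and $G^{2^k}\cap Z\subseteq L_kQ_k$ separately, using throughout the elementary observation that for every $h\in H$
\[
f(h):=x^{-2^k}(xh)^{2^k}=[h,x,\overset{2^k-1}{\ldots},x]\,d_k(h)
\]
lies in $G^{2^k}$, since $x^{2^k}\in G^{2^k}$ and $G^{2^k}$ is a subgroup; more generally $f_a(h):=x^{-a2^k}(x^ah)^{2^k}\in G^{2^k}$ for every $a\in\Z_2$, because $x^{a2^k}=(x^{2^k})^a\in G^{2^k}$. By Lemma~\ref{lemma L_k}(ii),(iii) the map $h\mapsto f(h)$ is a homomorphism modulo $L_k$, and together with $[H,Z]=1$ and $Z^2=1$ this is the main device for combining such elements. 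I will also use that $\{c_l^{\,2}\}\cup\{z_{m,n}\}$ is an $\F_2$-basis of $Z$ (as the given values of $|Z:\gamma_i(G)\cap Z|$ confirm by a dimension count): since $L_k$ lies in the span of the $z_{m,n}$ and $Q_k$ in the span of the $c_l^{\,2}$, the two pieces occupy independent coordinates, so it suffices to locate the square-coordinates and the $z$-coordinates of $G^{2^k}\cap Z$ separately. Finally, as $Z\subseteq H$ and $G/H\cong\Z_2$, one has $G^{2^k}\cap Z\subseteq H$, and it is enough to analyse the powers $g^{2^k}$ with $g=x^ah$.

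For the inclusion $L_kQ_k\subseteq G^{2^k}$ the ``high'' generators come for free. Squaring annihilates the central factor, so $f(c_i)^2=c_{i+2^k-1}^{\,2}$ yields $c_l^{\,2}\in G^{2^k}$ for all $l\ge 2^k$; and since $G^{2^k}\trianglelefteq G$ and $d_k(c_i)$ centralises $H$, the commutator $[f(c_i),c_n]=z_{i+2^k-1,n}$ yields $z_{m,n}\in G^{2^k}$ for all $m\ge 2^k$, whence $\langle z_{m,n}\mid m\ge 2^k\rangle^G\subseteq G^{2^k}$. For the cross-terms, the splitting identity \eqref{eq split} together with Lemma~\ref{lemma L_k}(iii) gives $f(c_ic_j)\,(f(c_i)f(c_j))^{-1}=w_{i,j,k}\cdot e_{i,j}$ with $e_{i,j}\in L_k$; reading \eqref{eq:normal-w} as a recursion in the indices shows $e_{i,j}$ is built from higher-weight terms, so an induction on the indices — carried out in the finite quotients $G_m$, where commutators of large weight vanish — places every $w_{i,j,k}$ in $G^{2^k}$.

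The genuinely delicate generators are the low squares $c_l^{\,2}$ with $2^{k-1}\le l<2^k$. Here a single $f(c_i)$ cannot help: its leading factor $[c_i,x,\overset{2^k-1}{\ldots},x]=c_{i+2^k-1}$ is non-trivial in $H/Z$, and this leading factor together with the low square $c_{i+2^{k-1}-1}^{\,2}$ occurring in $d_k(c_i)$ both arise from the \emph{single} generator $c_i$ — via the residues $\binom{2^k}{2^{k-1}}\equiv 2$ and $\binom{2^k}{2^k}\equiv 1\bmod 4$ in Lemma~\ref{lemma commutator identities} — so any cancellation of leading factors, which is forced if the outcome is to lie in $Z$, simultaneously cancels the square. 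The way out is that the operator $h\mapsto[h,x]$ on $H/Z$ is \emph{nilpotent on each finite quotient}, with a one-dimensional kernel spanned by the ``norm'' of a block of consecutive $c_i$; summing $f(c_i)$ over a block of length on the order of $2^k$ therefore annihilates the leading factors in a sufficiently high quotient while the accompanying squares survive. Matching the surviving squares against the high squares and the $z_{m,n}$ already obtained and solving the resulting triangular $\F_2$-system isolates each $c_l^{\,2}$ with $2^{k-1}\le l<2^k$; this is precisely the ``counting in blocks''. Combined with the previous paragraph, this gives $L_kQ_k\subseteq G^{2^k}$.

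For the reverse inclusion I would pass to $\bar G=G/L_kQ_k$ and show $\bar G^{2^k}\cap\bar Z=1$, which by the modular law (as $L_kQ_k\subseteq Z$) is equivalent to $G^{2^k}\cap Z\subseteq L_kQ_k$. Writing an arbitrary element of $G^{2^k}$ as $\prod_t g_t^{2^k}$ with $g_t=x^{a_t}h_t$ and reducing each factor by Lemma~\ref{lemma commutator identities}, the linearity of $f$ modulo $L_k$ collapses everything to a sum of leading $H/Z$-terms together with square- and $z$-coordinates; the hypothesis that the product lies in $Z$ forces the total leading $H/Z$-term to vanish, and the same block analysis shows the accompanying $Z$-coordinates can then only be the ones spanning $L_kQ_k$. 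The main obstacle throughout is exactly this bookkeeping: proving that the block cancellations of leading factors, governed by the binomial pairing above and the nilpotency kernel of $h\mapsto[h,x]$, produce \emph{precisely} the span of $\{c_l^{\,2}\mid l\ge 2^{k-1}\}$ and $\{w_{i,j,k},\,z_{m,n}\mid m\ge 2^k\}$, with neither deficiency nor surplus.
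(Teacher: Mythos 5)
Your overall architecture agrees with the paper's in several respects: both inclusions are proved separately, the device $f(h)=x^{-2^k}(xh)^{2^k}$ together with Lemma~\ref{lemma L_k} (linearity of $f$ and $d_k$ modulo $L_k$) is the engine, and squaring $f(c_i)$ resp.\ commutating it with $c_n$ does capture $c_l^{\,2}$ for $l\ge 2^k$ and $z_{m,n}$ for $m\ge 2^k$ exactly as in the paper. The genuine gap is in the part you yourself flag as delicate: the low squares $c_l^{\,2}$ with $2^{k-1}\le l<2^k$. The paper does not prove this inside Proposition~\ref{prop:lower-bound-P} at all --- it quotes $Q_k\gamma_{2^{k+1}}(G)\le G^{2^k}$ from \cite[Proof of Thm.~4.4]{HT} --- whereas your substitute argument by ``summing $f(c_i)$ over a block'' rests on a false premise. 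In the finite quotient $G_m$ one has $H_m/Z_m\cong\F_2[t]/(t^{2^m})$ with $c_i\mapsto t^{i-1}$ and $[\,\cdot\,,x]$ acting as multiplication by $t$; the kernel of this map is spanned by the single basis element $c_{2^m}$, not by a norm over a block of length about $2^k$, and, more to the point, the leading factors $c_{i+2^k-1}$ of the $f(c_i)$ are linearly independent modulo $Z_m$ whenever $i+2^k-1\le 2^m$. Hence a product $\prod_{i\in S}f(c_i)$ lies in $Z_m$ only if every $i\in S$ satisfies $i>2^m-2^k+1$, and the accompanying squares $c_{i+2^{k-1}-1}^{\,2}$ then all have index exceeding $2^m-2^{k-1}$: for $m\gg k$ these already lie in $\langle c_l^{\,2}\mid l\ge 2^k\rangle$, so your ``triangular $\F_2$-system'' never isolates $c_l^{\,2}$ with $2^{k-1}\le l<2^k$. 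This containment needs a genuinely different input (the citation, or a reworking of the argument of \cite{HT}).

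Two smaller points. Your treatment of $w_{i,j,k}$ via $f(c_ic_j)\bigl(f(c_i)f(c_j)\bigr)^{-1}=w_{i,j,k}e_{i,j}$ with $e_{i,j}\in L_k$ is circular as stated, since $L_k\le G^{2^k}$ is precisely what is being proved; the paper avoids this by using only the diagonal case, where $x^{-2^k}(xc_i^{\,2})^{2^k}=c_{2^k+i-1}^{\,2}w_{i,i,k}$ gives $w_{i,i,k}\in G^{2^k}$ outright, and by observing via \eqref{eq L_k normal closure} that the $w_{i,i,k}$ together with the $z_{m,n}$, $m\ge 2^k$, already normally generate $L_k$. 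In the reverse inclusion you also take for granted that a general $(x^{a}h)^{2^k}$, $a\in\Z_2$, reduces modulo $L_kQ_k$ to products of $(xh')^{\pm 2^k}$ and $(x^{2^m}h'')^{\pm 2^k}$; this is not immediate from Lemma~\ref{lemma commutator identities}, and the paper devotes a separate reverse induction on the weight of the $H$-part to it. The remaining skeleton of your reverse inclusion (linearity of $f$ modulo $L_k$, forcing $r=0$ and $h\in\gamma_{2^k}(G)Z$, whence $d_k(h)\in L_k$ and $[hh^*,x,\overset{2^k-1}{\ldots},x]\in L_kQ_k$) does match the paper.
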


\begin{proof}
For $r\in\mathbb{Z}$ and $h\in H$, we will be considering the expansion of $(x^rh)^{2^k}$, always according to Lemma~\ref{lemma commutator identities}.
Hence for conciseness, we will refrain from mentioning Lemma~\ref{lemma commutator identities} for the rest of the proof.

From 
\cite[Proof of Thm.~4.4]{HT} we know that $Q_k\gamma_{2^{k+1}}(G)\le G^{2^k}$. We now verify that $L_k\le G^{2^k}$. Indeed, first observe that, for $m\ge 2^k$ and $n\in\N$, 
\[
x^{-2^k}(xc_{m-2^k+1})^{2^k}\equiv c_m \pmod Z. 
\]
As $G^{2^k}$ is normal in~$G$, commutating with $c_n$ yields that we have $z_{m,n}\in G^{2^k}$.
Then we note that
\begin{equation}
\label{eq L_k normal closure}
L_k=\langle w_{i,i,k},z_{m,n}\mid i,n\in\N,\,m\ge 2^k\rangle^G
\end{equation}
since, $w_{i,i+1,k}=1$ and, as in Lemma~\ref{lemma p^k}, we have
$$
[w_{i,j,k},x]=w_{i+1,j,k}w_{i,j+1,k}w_{i+1,j+1,k}
$$
for every $i,j\in\N$. Hence it suffices to show that $w_{i,i,k}\in G^{2^k}$ for $i\in\N$. From~\eqref{eq split}, we have
\[
x^{-2^k}(xc_i^{\,2})^{2^k}=[c_i^{\,2},x,\overset{2^k-1}{\ldots},x]=c_{2^k+i-1}^{\,2}w_{i,i,k},
\]
from which it follows that $w_{i,i,k}\in G^{2^k}$, as required.

It remains to show that $G^{2^k}\cap Z \le L_kQ_k$.
Let $g=(x^rh)^{2^k}$, with $r\in\Z$ and $h\in H$.
We first show that $g$ is, modulo $L_kQ_k$, a product of elements of the form $(x^{2^m}h^*)^{s2^k}$ with  $m\in\N_0$, $s\in\Z$ and $h^*\in H$.
To do this, we show that actually every element of the form $((x^{2^n}h_1)^th_2)^{2^k}$, with $h_1,h_2\in H$, $n\in \N_0$ and $t$ odd, can be written in that way modulo $L_kQ_k$.
Let $N\in\N$ be such that $h_2\in\gamma_{N}(G)$.
If $N\ge 2^{k}$, then
$$
((x^{2^n}h_1)^th_2)^{2^k}=(x^{2^n}h_1)^{t2^k}[h_2,x,\overset{2^{k+n}-2^n}{\ldots},x]z,
$$
with $z\in\gamma_{2^{k+1}}(G)\cap Z\le L_kQ_k$.
 Write $h^*=[h_2,x,\overset{2^{k+n}-2^n-2^k+1}{\ldots},x]$. Since clearly $d_k(h^*)\in \gamma_{2^{k+1}}(G)\cap Z$, 
we have
$$
[h_2,x,\overset{2^{k+n}-2^n}{\ldots},x]=[h^*,x,\overset{2^k-1}{\ldots},x]\equiv x^{-2^k}(xh^*)^{2^k}\pmod{L_kQ_k}.
$$
Suppose now that $N<2^{k}$ and that the assertion follows whenever $h_2$ lies in $\gamma_{N+1}(G)$.
Let $t^*$ be such that $tt^*\equiv 1\pmod 4$.
Then,
$$
((x^{2^n}h_1)^th_2)^{2^k}=((x^{2^n}h_1)^t(h_2^{\,t^*})^t)^{2^k}=((x^{2^n}h_1h_2^{\,t^*})^th_3)^{2^k},
$$
with $h_3\in\gamma_{N+1}(G)$, and the assertion follows by reverse induction.

Therefore, since $G$ is the closure of the subgroup consisting of  elements of the form $x^rh$ with $r\in\Z$ and $h\in H$, then $G^{2^k}$ is the closure of the subgroup generated by elements of the form
$$
g=(xh_{1})^{\epsilon_12^k}\cdots(xh_{n})^{\epsilon_n2^k}(x^{2^{m_1}}h^*_{1})^{\epsilon_{n+1}2^k}\cdots(x^{2^{m_{\nu}}}h^*_{\nu})^{\epsilon_{n+\nu}2^k}
$$
with $n,\nu\in\mathbb{N}_0$, $m_j\in\N$, $h_i,h^*_j\in H$ and $\epsilon_i\in\{-1,1\}$.
Now, working modulo~$L_kQ_k$ in the equivalences below, and noting that $\gamma_{2^{k+1}-2}(G)\cap Z\le L_kQ_k$,  Lemma~\ref{lemma L_k} gives
\begin{align*}
 g&=\prod_{i=1}^{n}(xh_i)^{\epsilon_i2^k}\prod_{i=1}^{\nu} (x^{2^{m_i}}h^*_i)^{\epsilon_{n+i}2^k}\\
 &\equiv
\prod_{i=1}^{n}(x^{2^k}[h_i,x,\overset{2^k-1}{\ldots},x]d_k(h_i))^{\epsilon_i}\!\prod_{i=1}^{\nu} ( x^{2^{k+m_{i}}}[h_i^*,x,\overset{2^{k+m_i}-2^{m_i}}{\ldots},x])^{\epsilon_{n+i}} \\
 &\equiv
\prod_{i=1}^{n}x^{\epsilon_i2^k}[h_i^{\epsilon_i},x,\overset{2^k-1}{\ldots},x]d_k(h_i^{\epsilon_i})\!\prod_{i=1}^{\nu} x^{\epsilon_{n+i}2^{k+m_{i}}}[h_i^*,x,\overset{2^{k+m_i}-2^{m_i}}{\ldots},x]^{\epsilon_{n+i}}\cdot [\widehat{h},x,\overset{2^k-1}{\ldots},x]\\
&\equiv
x^{r2^k}[hh^*,x,\overset{2^k-1}{\ldots},x]d_k(h)\pmod{L_kQ_k},
\end{align*}
with 
$r\in\Z$, $h\in H$ and $\widehat{h},h^*\in\gamma_{2^{k}}(G)$.
Since $d_k(h)\in Z$, it follows that $g\in Z$ if and only if $r=0$ and $hh^*\in Z$.
Hence $h\in \gamma_{2^k}(G)Z$, which implies that $d_k(h)\in L_k$. Further, from Lemma~\ref{lemma L_k}(i), we have $[hh^*,x,\overset{2^k-1}{\ldots},x]\in L_kQ_k$.
Therefore 
$g\equiv 1\pmod{L_kQ_k}$, as desired.
\end{proof}

The next observation will be useful for the next section. 
Recall that for a filtration series~$\mathcal{S}:G=S_0\ge S_1\ge\cdots$ of~$G$, we denote by $\mathcal{S}\mid_Z$ the restriction $\mathcal{S}\cap Z:Z=S_0\cap Z\ge S_1\cap Z\ge\cdots$.

\begin{corollary}\label{cor:M-P}
For the pro-2 group $G$ and $K\le_{\mathrm{c}} Z$, we have
\[
\hdim_Z^{\mathcal{P}\mid_Z}(K)=\hdim_Z^{\mathcal{M}\mid_Z}(K).
\]
\end{corollary}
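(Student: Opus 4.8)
The plan is to exploit the fact that, since $Z^2=1$, the subgroup $Z$ is an elementary abelian pro-$2$ group, i.e. a profinite $\F_2$-vector space, whose closed subgroups are exactly its closed $\F_2$-subspaces. With respect to a filtration series $\mathcal{S}\mid_Z: Z=S_0\cap Z\ge S_1\cap Z\ge\cdots$ by such subspaces, the Barnea--Shalev formula reads
\[
\hdim_Z^{\mathcal{S}\mid_Z}(K)=\varliminf_{i\to\infty}\frac{\log_2\lvert K(S_i\cap Z):S_i\cap Z\rvert}{\log_2\lvert Z:S_i\cap Z\rvert}=\varliminf_{i\to\infty}\frac{\dim_{\F_2}K/(K\cap S_i)}{\dim_{\F_2}Z/(S_i\cap Z)},
\]
so that the whole computation reduces to counting $\F_2$-codimensions. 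I would first isolate the following elementary comparison principle for two subspace filtrations $\{A_i\}$ and $\{B_i\}$ of $Z$: if $\dim_{\F_2}(A_i+B_i)/(A_i\cap B_i)=o\big(\dim_{\F_2}Z/A_i\big)$ and $\dim_{\F_2}Z/A_i\sim\dim_{\F_2}Z/B_i$, then for every closed $K\le Z$ the two liminfs above coincide. The proof of this principle is a short squeeze: replacing $A_i$ by $B_i$ alters each numerator $\dim_{\F_2}K/(K\cap A_i)$ by at most $\dim_{\F_2}(A_i+B_i)/(A_i\cap B_i)$, which is negligible against the denominator, while the denominators are asymptotically equal.

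With this in hand, the task becomes to match the two series term by term. I would match the index $k$ of the $2$-power series against the index $k$ of $\mathcal{M}$ and apply the comparison principle to $A_k=G^{2^k}\cap Z$ and $B_k=M_k\cap Z$. For the $\mathcal{P}$-side, Proposition~\ref{prop:lower-bound-P} gives the explicit description $G^{2^k}\cap Z=L_kQ_k$, so that $Z/(G^{2^k}\cap Z)$ is spanned by the images of the $c_l^{\,2}$ with $l<2^{k-1}$ and of the $z_{m,n}$ with $1\le n<m<2^k$, modulo the relations coming from the $w_{i,j,k}$; a block count then yields $\dim_{\F_2}Z/(G^{2^k}\cap Z)\sim 2^{2k-1}$. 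For the $\mathcal{M}$-side I would use the explicit description of $M_k\cap Z=\ker(Z\to Z_k)$ arising from the construction of $G_k$, together with the facts that $x_k$ has order $2^k$ and that $G_k$ has nilpotency class $2^{k+1}-1$; this forces the generators $c_l^{\,2}$ and $z_{m,n}$ of large index to lie in $M_k\cap Z$ and, via Lemma~\ref{lemma p^k} applied with $x^{2^k}$, imposes exactly the $2^k$-periodicity relations among the $z_{m,n}$ that are encoded by $L_k$. Since $Z$ has strong Hausdorff dimension $1$ in $G$ with respect to both $\mathcal{P}$ and $\mathcal{M}$, we also know $\dim_{\F_2}Z/(M_k\cap Z)\sim\log_2\lvert G:M_k\rvert=\log_2\lvert G_k\rvert\sim 2^{2k-1}$, matching the $\mathcal{P}$-side denominator.

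The main obstacle is the term-by-term comparison of the two subspaces $L_kQ_k$ and $M_k\cap Z$: although both have codimension asymptotic to $2^{2k-1}$, establishing that their symmetric difference has codimension only $o(2^{2k})$ requires organising the generators $c_l^{\,2}$ and $z_{m,n}$ into blocks indexed by the residues of $m$ and $n$ modulo $2^k$ and checking that the two families of relations (the $w_{i,j,k}$ on one side, the defining relations of $Z_k$ on the other) agree away from a boundary of size $O(2^k)$. It is precisely here that the block-counting method advertised in the introduction is used: the discrepancies between $L_kQ_k$ and $M_k\cap Z$ are confined to $O(2^k)$ boundary generators (those $z_{m,n}$ with $m$ or $n$ within $2^k$ of a multiple of $2^k$, together with the $c_l^{\,2}$ with $l$ near $2^{k-1}$), whose total dimension is of lower order than $2^{2k}$. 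Once this is verified, the comparison principle of the first paragraph applies with the identity reindexing and yields $\hdim_Z^{\mathcal{P}\mid_Z}(K)=\hdim_Z^{\mathcal{M}\mid_Z}(K)$ for every closed $K\le Z$.
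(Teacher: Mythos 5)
Your proposal is correct and follows essentially the same route as the paper: both rely on Proposition~\ref{prop:lower-bound-P} together with the explicit description of $M_k\cap Z$ to show that the two filtrations of~$Z$ differ at each level~$k$ by only $O(2^k)$ generators, which is negligible against $\log_2|Z:G^{2^k}\cap Z|$, and then invoke a comparison lemma (\cite[Lem.~2.2]{KTZR} in the paper). The paper's version is slightly cleaner in that it observes the one-sided containment $M_k\cap Z\le L_kQ_k=G^{2^k}\cap Z$, so the ``symmetric difference'' you estimate is just the index $|L_kQ_k:M_k\cap Z|$, bounded by an explicit count of at most $2^{k+1}+2^{k-1}$ extra generators.
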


\begin{proof}
For $k\in\mathbb{N}$, it follows from the construction of $G_k$ that 
\[
M_k\cap Z=\langle c_l^{\,2}, z_{m,n} \mid l>2^k,\,m> 2^k,\,n\in\mathbb{N}\rangle 
\le G^{2^k}\cap Z;
\]
compare also \cite[Lem.~3.3(ii)]{HT}. 
We  claim that
\begin{align*}
L_kQ_k&=\langle w_{i,i,k},\,[z_{j+1,j},x,\overset{2^{k}-1}{\ldots},x],z_{2^k,n}, c_l^{\,2}\mid 1\le i\le 2^{k-1},\\
&\qquad\qquad\quad 1\le j\le 2^{k-1}-1,\,1\le n\le 2^k-1,\, 2^{k-1}\le l \le 2^k\rangle (M_k\cap Z).
\end{align*}
Indeed, recall from \eqref{eq L_k normal closure} that
$$
L_k=\langle w_{i,i,k},z_{m,n}\mid i,n\in\N,\,m\ge 2^k\rangle^G,
$$
and moreover, as seen in~\eqref{eq:normal-w} we have
\begin{align*}
[w_{i,i,k},x]
&\equiv [z_{i+1,i},x,\overset{2^k-1}{\ldots},x]w_{i+1,i+1,k}\pmod{\gamma_{2^{k+1}}(G)\cap Z},
\end{align*}
so the claim follows.

Now, by counting the number of generators, we have
$$
\log_2 |L_kQ_k:M_k\cap Z|\le 2^{k+1}+2^{k-1}.
$$
Hence, by Proposition~\ref{prop:lower-bound-P},
\begin{align*}
\lim_{k\to\infty}\frac{\log_2|G^{2^k}\cap Z :M_k\cap Z |}{\log_2|Z:G^{2^k}\cap Z|}&\le\lim_{k\to\infty}\frac{2^{k+1}+2^{k-1}}{\log_2|Z:G^{2^k}\cap Z|}=0,
\end{align*}
where the equality follows from the fact that $\log_2|Z:G^{2^k}\cap Z|\ge 2\binom{2^{k-1}}{2}$; cf. \cite[Proof of Thm.~4.4]{HT}. The result now follows from \cite[Lem.~2.2]{KTZR}.
\end{proof}


We now determine the Frattini series~$\mathcal{F}$ precisely.
Recall that for $i,j,\ell\in\N$, 
$$
w_{i,j,\ell}=[z_{i+1,j},x,\overset{2^\ell-2}
\ldots,x][z_{i+2,j+1},x,\overset{2^\ell-3}
\ldots,x]\cdots [z_{i+2^\ell-2,j+2^\ell-3},x]z_{i+2^\ell-1,j+2^\ell-2}.
$$
For $j,\ell\in\N$, we write $w_{j,\ell}:=w_{j,j,\ell}$.
Then for $k\in\N$, we define
\begin{align*}
\Psi_k&=\langle [w_{j,\ell-1} ,x^{2^{\ell}}, x^{2^{\ell+1}},\ldots, x^{2^{k-1}}]\mid 2\le \ell\le k, \,2^{\ell-2}\le j\le 2^{\ell-1}-1 \rangle\le\gamma_{2^k}(G),\\
\Lambda_k&=\langle [z_{m,n},x^{2^{\ell}},x^{2^{\ell+1}},\ldots, x^{2^{k-1}}]\mid 2\le \ell\le k-1, \,2^{\ell-1}\le n<m<2^{\ell} \rangle\le\gamma_{2^k+1}(G),\\
\Theta_k&=\langle z_{m,n},\,z_{m',n'}\mid m,n\ge 2^{k-1},\,m'\ge 2^k,\,n'\in\N\rangle\ge \gamma_{2^k+2^{k-1}}(G)\cap \langle z_{i,j}\mid i,j\in\N\rangle.
\end{align*}
Note that the $z_{m,n}$ in the presentation of $\Theta_k$ are precisely the elements that we would obtain if we let $\ell=k$ in the presentation of $\Lambda_k$.

\begin{proposition}
\label{lemma Frattini explicit}
For each $k\in\N$, we have
$$
\Phi_k(G)=\langle x^{2^k},c_j\mid j\ge 2^k\rangle Q_k\Psi_k\Lambda_k\Theta_k,
$$
with $\log_2|\Psi_k|\le 2^{k-1}-1$ and, for $k\ge 2$,
\begin{align*}
\log_2|\Lambda_k\Theta_k:\Theta_k|&=
\sum_{\ell=2}^{k-1}(2^{\ell-1}-1)2^{\ell-2}=\frac{2^{2k-3}+1}{3} - 2^{k-2}.
\end{align*}
\end{proposition}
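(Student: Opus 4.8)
The plan is to establish the description of $\Phi_k(G)$ by induction on $k$, using at each step the recursion $\Phi_k(G)=\Phi_{k-1}(G)^2[\Phi_{k-1}(G),\Phi_{k-1}(G)]$. The base case $k=1$ is immediate: the index ranges defining $\Psi_1$ and $\Lambda_1$ are empty, $\Theta_1$ recovers all the $z_{m,n}$, and $\Phi_1(G)=G^2\gamma_2(G)=\langle x^2,c_j\mid j\ge 2\rangle Q_1\Theta_1$ since $y^2=c_1^2$ is a square. For the inductive step I would set $E=\Phi_{k-1}(G)$ and work with the generating set supplied by the inductive hypothesis: the non-central generators $x^{2^{k-1}}$ and $c_j$ ($j\ge 2^{k-1}$), together with the generators of the central subgroups $Q_{k-1},\Psi_{k-1},\Lambda_{k-1},\Theta_{k-1}\le Z$. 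Since $E/[E,E]$ is abelian, $E^2[E,E]$ is generated modulo $[E,E]$ by the squares of these generators, so throughout I only need to square generators and form pairwise commutators of generators.

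First I would determine $\Phi_k(G)$ modulo $Z$. Here $(x^{2^{k-1}})^2=x^{2^k}$, while expanding $[x^{2^{k-1}},c_j]$ by Lemma~\ref{lemma commutator identities} and using that $\binom{2^{k-1}}{t}$ is even for $0<t<2^{k-1}$ kills every intermediate term; the leading power $[x,c_j]^{2^{k-1}}=c_{j+1}^{\mp 2^{k-1}}=1$ collapses because $c_{j+1}^2\in Z$ has order dividing $2$, leaving only the long commutator $[x,c_j,x,\overset{2^{k-1}-1}{\ldots},x]\equiv c_{j+2^{k-1}}^{\pm1}\pmod{Z}$. As $j$ ranges over $j\ge 2^{k-1}$ this produces exactly $\langle c_m\mid m\ge 2^k\rangle$ modulo $Z$, yielding the non-central part $\langle x^{2^k},c_j\mid j\ge 2^k\rangle$ of the claimed description.

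It then remains to identify $\Phi_k(G)\cap Z=Q_k\Psi_k\Lambda_k\Theta_k$, and I would organise the central contributions by their source. (i) The squares $c_j^2$ ($j\ge 2^{k-1}$) give $Q_k$, and the commutators $[c_i,c_j]=z_{i,j}$ ($i,j\ge 2^{k-1}$) give the $m,n\ge 2^{k-1}$ generators of $\Theta_k$. (ii) Applying $[\,\cdot\,,x^{2^{k-1}}]$ to the generators of $\Theta_{k-1}$ produces, via Lemma~\ref{lemma p^k}, the remaining ($m'\ge 2^k$) generators of $\Theta_k$, and from the $z_{m,n}$ with $2^{k-2}\le n<m<2^{k-1}$ the new $\ell=k-1$ generators of $\Lambda_k$; commutating the generators of $\Lambda_{k-1}$ with $x^{2^{k-1}}$ simply appends the factor $x^{2^{k-1}}$ and yields those of $\Lambda_k$ with $\ell\le k-2$. (iii) Expanding $[x^{2^{k-1}},c_j^2]$ for $c_j^2\in Q_{k-1}$ by the collection formula and then invoking identity~\eqref{eq split} produces the elements $w_{j,k-1}$, the new $\ell=k$ generators of $\Psi_k$, while commutating the generators of $\Psi_{k-1}$ with $x^{2^{k-1}}$ gives those with $\ell\le k-1$. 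Throughout I would use the bilinearity and linearity statements of Lemma~\ref{lemma L_k}, the normal-closure reduction~\eqref{eq L_k normal closure}, and the known structure of $\gamma_i(G)\cap Z$ to discard higher-weight corrections and to confirm that nothing outside $Q_k\Psi_k\Lambda_k\Theta_k$ survives.

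Finally, the order statements are counting arguments. The number of generators listed for $\Psi_k$ is $\sum_{\ell=2}^{k}2^{\ell-2}=2^{k-1}-1$, and since $\Psi_k\le Z$ is elementary abelian this gives $\log_2|\Psi_k|\le 2^{k-1}-1$ at once. For $\Lambda_k$ the generator count is $\sum_{\ell=2}^{k-1}\binom{2^{\ell-1}}{2}=\sum_{\ell=2}^{k-1}(2^{\ell-1}-1)2^{\ell-2}$, which evaluates to $\tfrac{2^{2k-3}+1}{3}-2^{k-2}$ by summing the resulting geometric series; so the stated equality reduces to proving that these generators are linearly independent modulo $\Theta_k$. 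I would do this by a leading-term analysis: iterating Lemma~\ref{lemma p^k} writes $[z_{m,n},x^{2^\ell},\ldots,x^{2^{k-1}}]$ as the product of the $z_{m+A,\,n+B}$ with $A=\sum_{i=\ell}^{k-1}\epsilon_i 2^i$, $B=\sum_{i=\ell}^{k-1}\delta_i 2^i$ and $(\epsilon_i,\delta_i)\ne(0,0)$, and the distinguished term $z_{m+2^k-2^\ell,\,n}$ (all $\epsilon_i=1$, all $\delta_i=0$) has first index $<2^k$ and second index $n<2^{k-1}$, hence lies outside $\Theta_k$; as $n$ determines $\ell$ and then the first index determines $m$, these terms are pairwise distinct, forcing independence. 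The main obstacle is exactly this bookkeeping in the inductive step: tracking precisely, through the collection formula and Lemma~\ref{lemma p^k}, which $w$- and $z$-corrections are produced, and verifying that each distinguished leading term is genuinely private to its generator, so that the $\Lambda_k$ count is an equality rather than merely an upper bound.
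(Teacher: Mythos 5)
Your overall strategy coincides with the paper's: induction on $k$ via $\Phi_k(G)=\Phi_{k-1}(G)^2[\Phi_{k-1}(G),\Phi_{k-1}(G)]$, the same bookkeeping of which squares and commutators of the inductive generators land in $Q_k$, $\Psi_k$, $\Lambda_k$, $\Theta_k$, the same generator counts, and the same distinguished term $z_{m+2^k-2^\ell,\,n}$ for the independence of the $\Lambda_k$-generators. (A minor difference: you derive $\Phi_k(G)$ modulo $Z$ directly from the collection formula, where the paper cites the earlier computation of the Frattini series; that part of your argument is fine.)

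There is, however, a genuine gap in your final step, and it is exactly the point you flag as "the main obstacle". Pairwise distinct distinguished terms lying outside $\Theta_k$ do \emph{not} by themselves force linear independence, and the stronger property you propose to verify --- that each distinguished term is "genuinely private to its generator" --- is false. Concretely, for $k\ge 4$ expand $[z_{3,2},x^{2^2},x^{2^3},\ldots,x^{2^{k-1}}]$ as a product of terms $z_{3+A,\,2+B}$ with $A=\sum_{i=2}^{k-1}\epsilon_i2^i$, $B=\sum_{i=2}^{k-1}\delta_i2^i$ and $(\epsilon_i,\delta_i)\neq(0,0)$: taking $\epsilon_2=\delta_2=1$ and $\epsilon_i=1,\ \delta_i=0$ for $i\ge 3$ gives $A=2^k-4$, $B=4$, i.e.\ the term $z_{2^k-1,\,6}$, which lies outside $\Theta_k$ and is precisely the distinguished term of the generator $[z_{7,6},x^{2^3},\ldots,x^{2^{k-1}}]$ (here $\ell=3$, $m=7$, $n=6$). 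So a pivot of one generator can reappear among the non-pivot terms of a generator with smaller second index, and your echelon argument as stated breaks down. The repair is an ordering by the second index $n$: introduce the auxiliary subgroups $\Theta_{k,s}=\langle z_{m,n},z_{m',n'}\mid m,n\ge s,\ m'\ge 2^k\rangle\Theta_k$ and observe that \emph{all} non-distinguished terms of a generator with second index $n$ lie in $\Theta_{k,n+1}$ (any $B>0$ raises the second index, and the first index already exceeds $n$), whereas its distinguished term does not, and every generator with second index $>n$ lies entirely in $\Theta_{k,n+1}$. Reducing a putative relation modulo $\Theta_{k,s+1}$, where $s$ is the minimal second index occurring, then isolates the distinct pivots $z_{M,s}$ and yields independence. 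This is the argument the paper gives; without it (or an equivalent device) the $\Lambda_k$ count remains only an upper bound.
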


\begin{proof}
We proceed by induction on $k$, with the result for $k=1$ being trivial.
Thus, assume the result true for $k-1$.
As $\Phi_k(G)=\Phi_{k-1}(G)^2$ and $[\Phi_{k-1}(G),\Phi_{k-1}(G)]\le\Phi_{k-1}(G)^2$, 
it is clear that
$$
\Phi_k(G)=\langle x^{2^k}\rangle Q_k[\Phi_{k-1}(G),\Phi_{k-1}(G)].
$$
From \cite[Proof of Thm.~4.5]{HT} we have
$$
\langle c_j\mid j\ge 2^k\rangle\le\Phi_k(G)\ \ \text{and}\ \ c_{2^k-1}\not\in\Phi_k(G),
$$
and moreover, as $\Phi_k(G)\trianglelefteq G$, it follows that $z_{m,n}\in\Phi_k(G)$ for all $m\ge 2^k$ and $n\in\N$.
Also, since $c_i\in\Phi_{k-1}(G)$ for all $i\ge 2^{k-1}$, we have $z_{m,n}\in\Phi_k(G)$ for all $m,n\ge 2^{k-1}$.
This shows that $\Theta_k\le\Phi_k(G)$.
Thus, noting that $\langle c_j\mid j\ge 2^k\rangle Q_k\Theta_k$ is normal in~$G$, we claim that
\begin{equation}\label{eq:equivalence}
[\Phi_{k-1}(G),\Phi_{k-1}(G)]\equiv\Psi_k\Lambda_k\pmod{\langle c_j\mid j\ge 2^k\rangle Q_k\Theta_k}.
\end{equation}
Note that $[\Psi_k\Lambda_k,x^{2^{k-1}}]\le \gamma_{2^k+2^{k-1}}(G)\cap \langle z_{i,j}\mid i,j\in\N\rangle\le \Theta_k$, so $\Psi_k\Lambda_k$ is normal in $\Phi_{k-1}(G)$ modulo~$\Theta_k$. Here for a subgroup $A\le G$, by $[A,x^{2^{k-1}}]$ we mean $\langle [a,x^{2^{k-1}}] \mid a\in A\rangle$.
 Thus, in order to prove \eqref{eq:equivalence}, let us compute the commutators coming from $[Q_{k-1},x^{2^{k-1}}]$, $[\Psi_{k-1},x^{2^{k-1}}]$, $[\Lambda_{k-1},x^{2^{k-1}}]$, $[\Theta_{k-1},x^{2^{k-1}}]$, and $[c_j,x^{2^{k-1}}]$ for $j\ge 2^{k-1}$.

It is routine to see that 
\begin{equation}\label{eq:generator-basis}
[\Lambda_{k-1}\Theta_{k-1},x^{2^{k-1}}]\equiv\Lambda_k\pmod{\Theta_k}
\end{equation}
and that $[\Psi_{k-1},x^{2^{k-1}}]\subseteq\Psi_k$.
Observe also that for $j\ge 2^{k-1}$ we have $[c_j,x^{2^{k-1}}]=c_{j+2^{k-2}}^{\,2}c_{j+2^{k-1}}z$ with $z\in \Theta_k$, $c_{j+2^{k-2}}^{\,2}\in Q_k$ and $c_{j+2^{k-1}}\in\langle c_l\mid l\ge 2^k\rangle$.
Hence, it suffices to show that $\Psi_k\equiv [Q_{k-1}\Psi_{k-1}
,x^{2^{k-1}}]\pmod{Q_k\Theta_k}$.
For this purpose, as seen in the proof of Proposition~\ref{prop:lower-bound-P}, notice that $[c_j^{\,2},x^{2^{k-1}}]\equiv w_{j,k-1} \pmod {Q_k}$, which lies in $\Psi_k$ if $2^{k-2}\le j\le 2^{k-1}-1$ and in $\Theta_k$ if $j\ge 2^{k-1}$.
Moreover, this gives us precisely the generators of $\Psi_k$ that are not in $[\Psi_{k-1},x^{2^{k-1}}]$, so the assertion follows.

Next, since $\Psi_k=[\Psi_{k-1},x^{2^{k-1}}]\langle w_{j,k-1}\mid 2^{k-2}\le j\le 2^{k-1}-1\rangle$, the inductive hypothesis yields
$$
|\Psi_k|\le |\Psi_{k-1}|+2^{k-2}\le 2^{k-2}-1+2^{k-2}=2^{k-1}-1,
$$
as desired.

For the final statement, we show that the generators in the presentation of $\Lambda_k$ generate it independently modulo~$\Theta_k$.
Then, the result will follow just by counting the generators.
In order to do so, define for every $s\in \N$ the subgroup
$$
\Theta_{k,s}=\langle z_{m,n},z_{m',n'}\mid m,n\ge s, \,m'\ge 2^k\rangle\Theta_k.
$$
Clearly, $\Theta_{k,s}=\Theta_k$ for every $s\ge 2^{k-1}$.
Note by Lemma~\ref{lemma p^k} that the generators $[z_{m,n},x^{2^\ell},x^{2^{\ell+1}},\ldots,x^{2^{k-1}}]$ lie in $\Theta_{k,s}$ for every $1\le s\le n<m$ while
\begin{equation}
\label{presentation}
[z_{m,n},x^{2^\ell},x^{2^{\ell+1}},\ldots,x^{2^{k-1}}]\equiv z_{m+2^{\ell}+\cdots+2^{k-1},n}\pmod{\Theta_{k,n+1}}.
\end{equation}
Now, observe that $[z_{3,2},x^{2^2},x^{2^3},\ldots,x^{2^{k-1}}]$ is the unique generator in the presentation of $\Lambda_k$ with $n=2$.
This is, hence, the unique generator not lying in $\Theta_{k,3}$, and hence it is independent from all the others.
By induction, suppose that   for all $n<s$ with $s>3$, the generators $[z_{m,n},x^{2^\ell},x^{2^{\ell+1}},\ldots,x^{2^{k-1}}]$ of~$\Lambda_k$ are independent.
Take then the generators with $n=s$ and note by (\ref{presentation}) that they are all independent modulo~$\Theta_{k,s+1}$.
They are hence also independent from all the other previously considered generators, and the assertion follows.
\end{proof}


\section{The finitely generated Hausdorff spectra of \texorpdfstring{$G$}{G}}\label{sec:fg-spec}

In this section, we prove the main result for the case $p=2$. 
As the proof is quite technical, we outline the general approach here. First we perform three reduction steps. One, we observe that, for a closed subgroup~$K$ of~$G$, the Hausdorff dimension of~$K$ in~$G$ is unchanged by adding or removing a finite number of generators in~$Z$ to~$K$; see Lemma~\ref{lemma remove generators}. Two, for a finitely generated closed subgroup~$K$ of~$G$, we observe that $\hdim_G^{\mathcal{S}}(K)=\hdim_Z^{\mathcal{S}|_Z}(K\cap Z)$ and hence it suffices to work within the subgroup~$Z$;  see Corollary~\ref{cor:reduce-to-Z}.
Three, with the second observation, we make use of the fact that the restrictions to~$Z$ of the filtration series~$\mathcal{P}$ and $\mathcal{M}$  are essentially the same (see  Corollary~\ref{cor:M-P}), hence 
it suffices to consider the four filtration series $\mathcal{L}$, $\mathcal{D}$, $\mathcal{M}$ and $\mathcal{F}$. As mentioned in the introduction, in the final part of the proof, which boils down to a counting argument, we introduce the idea of counting in blocks, which simplifies computations.

\smallskip

\begin{lemma}
\label{lemma remove generators}
Let $L$ be a closed subgroup of $G$ and $\mathcal{S} \in\{\mathcal{L},\mathcal{D},\mathcal{P},\mathcal{F},\mathcal{M}\}$.
Then, for every $n\in\N$ and $z_1,\ldots,z_n\in Z$, we have
$$
\hdim_G^{\mathcal{S}}(L)=\hdim_G^{\mathcal{S}}(L\langle z_1,\ldots,z_n\rangle^G ).
$$
\end{lemma}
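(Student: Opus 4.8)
The plan is to apply the Barnea--Shalev formula and to show that, writing $M=\langle z_1,\ldots,z_n\rangle^G$, the normal subgroup $M\le Z$ is so small that it contributes negligibly to every quotient of a standard series. Write $\mathcal S\colon G=S_0\ge S_1\ge\cdots$. Since $S_k\trianglelefteq G$ and $MS_k\trianglelefteq G$, the sets $LS_k$ and $LMS_k$ are subgroups, and the product formula gives $|LMS_k:LS_k|=|MS_k:MS_k\cap LS_k|\le|MS_k:S_k|=|M:M\cap S_k|$. Hence
$$\frac{\log_2|LMS_k:S_k|}{\log_2|G:S_k|}=\frac{\log_2|LS_k:S_k|}{\log_2|G:S_k|}+\epsilon_k,\qquad 0\le\epsilon_k\le\delta_k:=\frac{\log_2|M:M\cap S_k|}{\log_2|G:S_k|}.$$
If I can show $\delta_k\to 0$, then $\epsilon_k\to 0$, so the two lower limits coincide and $\hdim_G^{\mathcal S}(LM)=\hdim_G^{\mathcal S}(L)$, the reverse inequality being trivial from $L\le LM$. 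Everything thus reduces to proving $\lim_k\delta_k=0$.

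For the numerator I would exploit the module structure of $Z$. Since $Z^2=1$ and $[H,Z]=1$, the group $Z$ is an elementary abelian pro-$2$ group on which $H$ acts trivially, so the conjugation action of $G$ factors through $G/H\cong\Z_2=\langle\bar x\rangle$; consequently $M$ is precisely the closed $\F_2[[t]]$-submodule generated by $z_1,\ldots,z_n$, where $t=\bar x-1$ and $\F_2[[t]]\cong\F_2[[\Z_2]]$. Filtering $Z$ by the subgroups $\gamma_j(G)\cap Z$ and noting via Lemma~\ref{lemma p^k} and \eqref{eq split} that $t$ raises the associated $\gamma$-degree, the associated graded module $\operatorname{gr}(M)$ is generated by the $n$ leading symbols of $z_1,\ldots,z_n$ over a quotient of the polynomial ring $\F_2[T]$, with $T$ homogeneous of degree $1$. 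Therefore each graded layer of $M$ has $\F_2$-dimension at most $n$, which yields the linear bound $\log_2|M:M\cap\gamma_j(G)|\le n(j-1)=O(j)$.

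It then remains to sandwich each series between powers of $\gamma$. Using that $Z$ has strong Hausdorff dimension $1$, I may replace the denominator by $\log_2|Z:Z\cap S_k|$, which is asymptotically $\log_2|G:S_k|$. For $\mathcal S\in\{\mathcal L,\mathcal D,\mathcal M,\mathcal F\}$ the explicit descriptions of Section~\ref{subsection properties of G_k} together with Proposition~\ref{lemma Frattini explicit} give inclusions $\gamma_{j(k)}(G)\cap Z\subseteq S_k\cap Z\subseteq\gamma_{g(k)}(G)\cap Z$ with $g(k)\to\infty$ and $j(k)=O(g(k))$; for instance $\gamma_{i+1}(G)\cap Z\subseteq P_i(G)\cap Z\subseteq\gamma_i(G)\cap Z$, and $\gamma_{2^{k+1}}(G)\cap Z\subseteq\Phi_k(G)\cap Z\subseteq\gamma_{2^{k-1}}(G)\cap Z$. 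The first inclusion combined with the numerator bound gives $\log_2|M:M\cap S_k|\le n\,j(k)$, while the second together with $\log_2|Z:\gamma_g(G)\cap Z|\sim g^2/4$ gives $\log_2|Z:Z\cap S_k|\gtrsim g(k)^2/4$; since $j(k)=O(g(k))$ and $g(k)\to\infty$, this forces $\delta_k\to0$. The remaining series $\mathcal P$ I would treat by comparison with $\mathcal M$: by Proposition~\ref{prop:lower-bound-P} we have $G^{2^k}\cap Z=L_kQ_k$, and the proof of Corollary~\ref{cor:M-P} shows $\log_2|L_kQ_k:M_k\cap Z|\le 2^{k+1}+2^{k-1}$, so $\log_2|M:M\cap G^{2^k}|$ and $\log_2|M:M\cap M_k|$ differ only by a bounded amount, reducing $\mathcal P$ to the case $\mathcal M$.

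The main obstacle is the numerator estimate: the crucial step is to recognise $M$ as a finitely generated $\F_2[[t]]$-module and to verify that passing to the associated graded with respect to the $\gamma$-filtration caps each layer at dimension $n$, since this is what forces the numerator to grow only linearly against the quadratic growth of $\log_2|Z:\gamma_j(G)\cap Z|$. The series-by-series sandwiching is then routine bookkeeping from the explicit formulas, with the Frattini case needing the most care because its terms grow fastest, and the $p$-power case needing the indirect comparison with $\mathcal M$ rather than a direct $\gamma$-sandwich.
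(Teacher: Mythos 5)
Your proposal is correct and follows essentially the same route as the paper: both arguments rest on the product bound $|LMS_k:S_k|\le |LS_k:S_k|\cdot |MS_k:S_k|$ together with the fact that the normal closure $M=\langle z_1,\ldots,z_n\rangle^G$ has \emph{strong} Hausdorff dimension $0$, which makes the lower limits split. The only difference is that the paper outsources this last fact to \cite[Lem.~5.3]{KTZR} and the proof of \cite[Lem.~2.3]{HK}, whereas you prove it directly via the $\F_2[[t]]$-module structure of $Z$ (using $[H,Z]=Z^2=1$) and the linear-versus-quadratic growth comparison; your series-by-series sandwiching and the reduction of $\mathcal P$ to $\mathcal M$ are consistent with the explicit descriptions in Sections~2--3. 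One justification should be tightened: the associated graded of a submodule is \emph{not} in general generated by the leading symbols of its generators (e.g.\ $M=\langle(1,0),(1,t)\rangle\le\F_2[[t]]^2$ with the $t$-adic filtration), so the claim that each graded layer of $M$ has dimension at most $n$ does not follow as stated. This is harmless, because the bound you actually use is the cumulative one: since $t\cdot(\gamma_j(G)\cap Z)\subseteq\gamma_{j+1}(G)\cap Z$, one has $t^{j}M\le\gamma_j(G)\cap Z$, so $M(\gamma_j(G)\cap Z)/(\gamma_j(G)\cap Z)$ is a quotient of $M/t^{j}M$, hence of $(\F_2[[t]]/(t^{j}))^n$, giving $\log_2|M:M\cap\gamma_j(G)|\le nj$ directly and completing your argument.
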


\begin{proof}
 Write $K=L\langle z_1,\ldots,z_n\rangle^G $ and $A=\langle z_1,\ldots,z_n\rangle^G$, and note that $K= LA$.  Now, recalling that we write $\mathcal{S}:G=S_0\ge S_1\ge\cdots$ for a filtration series of~$G$, we have
\[
|LAS_i :S_i|\le |LS_i:S_i|\cdot |AS_i:S_i|
\]
and by~\cite[Lem.~5.3]{KTZR},
\[
\hdim_G^{\mathcal{S}}(A)=\hdim_G^{\mathcal{S}}(Z)\cdot \hdim_Z^{\mathcal{S}\mid_Z}(A).
\]
Thus, using the proof of
~\cite[Lem.~2.3]{HK}, it follows  that $\hdim_G^{\mathcal{S}}(A)=0$ has strong Hausdorff dimension. Therefore
\begin{equation*}
\hdim_G^{\mathcal{S}}(K)= \hdim_G^{\mathcal{S}}(LA) \le \hdim_G^{\mathcal{S}}(L)+\hdim_G^{\mathcal{S}}(A)\le \hdim_G^{\mathcal{S}}(L),
\end{equation*}
and since $L\le K$, equality follows.
\end{proof}

Let $\mathcal{S}:G=S_0\ge S_1\ge\cdots$ be a filtration series of~$G$. For $k\in\N$, we define
$$
n_k=\min\{i\in\N\mid\gamma_i(G)\le S_k\}.
$$
From~\cite{HT} we know that if $\mathcal{S}\in\{\mathcal{L},\mathcal{D}\}$ then $n_k=k+1$, if $\mathcal{S}=\mathcal{M}$ then $n_k= 2^{k+1}$, if $\mathcal{S}=\mathcal{P}$ then $n_k\le 2^{k+1}$,
and if $\mathcal{S}=\mathcal{F}$ then $n_k\le 2^k+2^{k-1}-1$. Further we have  that
\begin{equation}
\label{eq division}
\lim_{k\rightarrow\infty}\dfrac{n_k}{\log_2|Z:S_k\cap Z|}=0
\end{equation}
for every $\mathcal{S}\in\{\mathcal{L},\mathcal{D},\mathcal{P},\mathcal{F},\mathcal{M}\}$.

On the other hand, for every $k\in\N$ we also define
$$
\alpha_k=\min\{i\in\N\mid x^{2^i}\in S_k\}.
$$
It is easy to see that if $\mathcal{S}\in\{\mathcal{L},\mathcal{P},\mathcal{F},\mathcal{M}\}$ then we have $\alpha_k=k$, and if $\mathcal{S}=\mathcal{D}$ then  $\alpha_k=\lceil\log_2(k+1)\rceil$.

\smallskip

This next lemma is key to obtaining the second reduction step mentioned above.
\begin{lemma}
\label{lemma hasudorff dimension}
Let $\mathcal{S}\in\{\mathcal{L},\mathcal{D},\mathcal{P},\mathcal{M},\mathcal{F}\}$ and let $K=\langle x^{2^l}h, h_1,\ldots,h_d\rangle$ be a finitely generated closed subgroup of~$G$, for some $l, d\in\mathbb{N}_0$ and $h,h_1,\ldots, h_d\in H$. Then,
\[
\lim_{i\to \infty} \dfrac{\log_2|KS_i \cap Z : (K\cap Z)(S_i\cap Z)|}{\log_2|Z:S_i\cap Z|}=0.
\]
\end{lemma}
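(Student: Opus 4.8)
The plan is to show that the numerator $\log_2|KS_i\cap Z:(K\cap Z)(S_i\cap Z)|$ is bounded above by a quantity \emph{linear} in $n_i$, which suffices by~\eqref{eq division} (recall also $\log_2|Z:S_i\cap Z|\to\infty$). Throughout I write $\pi\colon G\to G/Z$ and put bars for images in $G/Z$, and I set $W=(KS_i\cap Z)/\big((K\cap Z)(S_i\cap Z)\big)$; since $Z^2=1$, this $W$ is an $\F_2$-vector space and the assertion is exactly $\dim_{\F_2}W=o(\log_2|Z:S_i\cap Z|)$. The first step is the identification $W\cong(\bar K\cap\pi(S_i))/\pi(K\cap S_i)$ via the \emph{central-defect} map: to $\bar k\in\bar K\cap\pi(S_i)$ lift $\bar k$ to $k\in K$, write $k=sz_k$ with $s\in S_i$ and $z_k\in Z$, and send $\bar k\mapsto z_k$ modulo $(K\cap Z)(S_i\cap Z)$. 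Using $[Z,G]\subseteq Z$ and $[Z,S_i]\subseteq Z\cap S_i$, a routine check shows this is a well-defined surjective homomorphism onto $W$ with kernel $\pi(K\cap S_i)$. It thus remains to bound $\dim_{\F_2}(\bar K\cap\pi(S_i))/\pi(K\cap S_i)$.

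The second step exploits the module structure of $G/Z$. Let $\theta=\bar x-1$ act by conjugation; since $\bar c_i^{\,\bar x}=\bar c_i\bar c_{i+1}$, the base $\bar H=H/Z$ is free of rank one over $\F_2[[\theta]]$, hence free of rank $2^l$ over $\Lambda:=\F_2[[\theta^{2^l}]]$, a complete discrete valuation ring (in particular a PID). Because $h$, and indeed all of $H$, centralises $Z$ while $\bar H$ is abelian, conjugation by $g_0=x^{2^l}h$ coincides on both $Z$ and $\bar H$ with conjugation by $x^{2^l}$; thus $Z$, $\bar H$, $K\cap Z$ and $\bar B_K:=\bar K\cap\bar H$ are all $\Lambda$-modules. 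As $S_i\trianglelefteq G$, the subgroup $\pi(S_i)$ is $\theta$-stable, so $\bar B_K\cap\pi(S_i)$ is a $\Lambda$-submodule of the free module $\bar H$ and hence free of rank $r\le 2^l$. Moreover $\bar K\cap\pi(S_i)$ is topologically generated by $\bar B_K\cap\pi(S_i)$ together with one element $\bar k_{\mathrm{top}}$ projecting to a topological generator of the pro-cyclic image of $\bar K\cap\pi(S_i)$ in $G/H\cong\Z_2$. Since $W$ has exponent $2$, the contribution of $\bar k_{\mathrm{top}}$ is at most one dimension, so $\dim_{\F_2}W\le\dim_{\F_2}f(\bar B_K\cap\pi(S_i))+1$, where $f$ is the restriction of the defect map to the base.

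The core is then twofold. First I would show $f$ is $\Lambda$-equivariant modulo $S_i\cap Z$: lifting $\bar b$ to $k_b=sz\in K\cap H$ with $s\in S_i\cap H$, $z\in Z$, the commutator $[k_b,g_0]=[s,g_0]^z[z,g_0]$ lifts $\theta^{2^l}\bar b$, and here $[H,Z]=1$ forces $[z,g_0]=\theta^{2^l}z$ while $[s,g_0]^z\in S_i$, whence $f(\theta^{2^l}\bar b)\equiv\theta^{2^l}f(\bar b)$ in $Z/(K\cap Z)(S_i\cap Z)$. Consequently $f(\bar B_K\cap\pi(S_i))$ is the $\Lambda$-submodule generated by the $r\le 2^l$ images $f(e_1),\dots,f(e_r)$. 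Second I would bound the number of surviving powers of $\theta^{2^l}$: assigning $z_{m,n}$ the degree $m+n$ and $c_l^{\,2}$ the degree $l$, Lemma~\ref{lemma p^k} and the identity $[c_l^{\,2},x]=c_{l+1}^{\,2}z_{l+1,l}$ show that $\theta$ raises the minimal degree of any element of $Z$ by at least one, while the description of $\gamma_j(G)\cap Z$ shows that an element of $Z$ all of whose terms have degree at least $n_i$ lies in $\gamma_{n_i}(G)\cap Z\subseteq S_i\cap Z$. Hence $\theta^{2^lp}f(e_j)$ vanishes in $W$ once $2^lp\ge n_i$, giving $\dim_{\F_2}f(\bar B_K\cap\pi(S_i))\le 2^l\big(\tfrac{n_i}{2^l}+1\big)=n_i+2^l$.

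Combining, $\dim_{\F_2}W\le n_i+2^l+1$, and dividing by $\log_2|Z:S_i\cap Z|$ and invoking~\eqref{eq division} yields the limit $0$. The step I expect to be the main obstacle is the equivariance of $f$, where one must check that every correction term (the factor $[s,g_0]^z$, the weight-two commutators arising when replacing $g_0$ by $x^{2^l}$, and the discrepancy between conjugation by $g_0$ and by $x^{2^l}$) lands in $S_i\cap Z$ and so is invisible modulo $(K\cap Z)(S_i\cap Z)$; marrying this with the PID bound $r\le 2^l$ and the degree bound $p\lesssim n_i/2^l$ is what produces the crucial linear estimate. A secondary, purely bookkeeping, subtlety is that the finitely many indices $i$ with $\alpha_i<l$ and the degenerate cases $d=0$ or $\bar B_K\cap\pi(S_i)=1$ do not affect the limit.
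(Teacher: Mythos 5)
Your argument is correct, but it is a genuinely different proof from the one in the paper. The paper argues combinatorially: it writes $KS_i=\langle x^{2^l}h\rangle K_HS_H$ with $K_H$ generated by the iterated commutators $h_{n,m}=[h_n,x^{2^l}h,\overset{m}{\ldots},x^{2^l}h]$ (after absorbing $x^{2^{\alpha_i}}$ into a generator $h_0\in H$), then bounds the order of $KS_i\cap Z$ modulo $(K\cap Z)(S_i\cap Z)$ by a case analysis on factorisations $g=ks$ with $k\in K_H$, $s\in S_H$, computing $K_H\cap Z$ explicitly and using that $K_H$ has class $2$; this yields the bound $2^{(2d+2)n_i+1}$ and relies on the explicit shape $S_i=\langle x^{2^{\alpha_i}}, S_i\cap H\rangle$ and on $c_{2^{\alpha_i}+1}\in S_iZ$, verified separately for each filtration series. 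Your route instead identifies $W$ with $(\bar K\cap\pi(S_i))/\pi(K\cap S_i)$ via the defect (connecting) map and then exploits the module structure of $H/Z$ over the discrete valuation ring $\F_2[[\theta^{2^l}]]$ to get rank at most $2^l$, combined with the degree filtration on $Z$ to kill all powers $\theta^{2^lp}$ with $2^lp\ge n_i$. I checked the step you flagged as the likely obstacle: the equivariance $f(\theta^{2^l}\bar b)\equiv\theta^{2^l}f(\bar b)$ does go through, since $[sz,g_0]=[s,g_0]^z[z,g_0]$ with $[s,g_0]^z\in S_i$ by normality and $[z,g_0]=[z,x^{2^l}]$ because $[H,Z]=1$; likewise $[Z,S_i]\le Z\cap S_i$ makes the defect map a well-defined homomorphism. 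Your approach buys three things: a bound $n_i+2^l+1$ that is independent of $d$ and linear rather than $(2d+2)n_i$; uniformity, in that it uses only normality of the $S_i$, the containment $\gamma_{n_i}(G)\le S_i$ and \eqref{eq division}, so no series-by-series verification of the structure of $S_i$ is needed; and a cleaner conceptual frame. What the paper's computation buys in exchange is the explicit generating set of $K\cap Z$ (namely $K_H\cap Z=\langle h_{0,m}^{\,2},[h_0,h_{n,m}]\rangle$ and, for the generators $h_1,\dots,h_d$, the subgroup $K_Z$), which is reused heavily in the block-counting argument of the main theorem, so the two proofs are not interchangeable in the larger scheme of the paper even though both establish this lemma.
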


\begin{proof}
For every $i\in\N$, let $n_i$ and $\alpha_i$ be defined as above. For $\mathcal{S}\in\{\mathcal{L},\mathcal{D},\mathcal{P},\mathcal{M},\mathcal{F}\}$, it follows from the previous sections that $S_i=\langle x^{2^{\alpha_i}},S_{i,H}\rangle$
with $S_{i,H}=S_i\cap H$.
Note also that $\alpha_i\xrightarrow{i\rightarrow\infty}\infty$. It suffices to show that for every $i$ such that $\alpha_i\ge l$ we have
$$
|KS_i \cap Z : (K\cap Z)(S_i\cap Z)|\le 2^{(2d+2)n_i+1}.
$$
Indeed,  by (\ref{eq division}) we then have
$$
\lim_{i\to \infty} \dfrac{\log_2|KS_i \cap Z : (K\cap Z)(S_i\cap Z)|}{\log_2|Z:S_i\cap Z|}\le\lim_{i\to \infty}\dfrac{(2d+2)n_i+1}{\log_2|Z:S_i\cap Z|}=0,
$$
as desired.

So we fix such an $i$ and write for simplicity  $S_H=S_{i,H}$. In the following, we will be working modulo $(K\cap Z)(S_i\cap Z)$, hence for simplicity we set $(K\cap Z)(S_i\cap Z)=1$. We have
$$
KS_i=\langle x^{2^l}h,h_1,\ldots,h_d,x^{2^{\alpha_i}}\rangle S_H.
$$
Observe, however, that
\begin{equation}\label{eq:one-x}
\langle x^{2^{l}}h, x^{2^{\alpha_i}}\rangle =\langle x^{2^{l}}h, h_0
\rangle,
\end{equation}
where $h_0=x^{2^{\alpha_i}}(x^{2^{l}}h)^{-{2^{\alpha_i-l}}}\in H$, and it can be seen, by Lemma~\ref{lemma commutator identities} and since $\alpha_i-l\ge 0$, that $h_0
\in \gamma_{2^{\alpha_i}-2^l+1}(G)Z$. 
So
$$
KS_i=\langle x^{2^{l}}h,h_0,h_1,\ldots,h_d\rangle S_H.
$$
Now, for every $0\le n\le d$ and every $m\in\N_0$ define $h_{n,m}=[h_n,x^{2^l}h,\overset{m}{\ldots},x^{2^l}h]$, and let
$$
K_H=\langle h_{n,m}\mid 0\le n\le d,\, m\in\N_0\rangle.
$$
Clearly we have $KS_i=\langle x^{2^l}h\rangle  K_HS_H$, and observe that $K_H$ is normalised by~$x^{2^l}h$.
Therefore, as $S_H$ is normal in $G$, every element~$g$ of~$KS_i$ can be written as
$$
g=(x^{2^l}h)^{\beta}ks
$$
with $\beta\in\Z_2$, $k\in K_H$ and $s\in S_H$.
Moreover, if $g\in KS_i\cap Z$ then we must have $\beta=0$, so that $g=ks$, which we will assume for the rest of the proof.

We proceed by considering two cases: when one of $k,s$ is in~$Z$, and when both $k,s\notin Z$. If $k\in Z$, then $s=k^{-1}g\in S_i\cap Z=1$, and so $g=k\in K_H\cap Z$.
Similarly, if $s\in Z$ then $s=1$ and so $g=k\in K_H\cap Z$.
We claim that
\[
K_H\cap Z=\langle h_{0,m}^{\,2}, [h_0, h_{n,m}]\mid m\in \mathbb{N}_0,\,1\le n\le d\rangle.
\]
First, observe that $c_{2^{\alpha_i}+1}\in S_iZ$.
Indeed, this is clear for $\mathcal{S}\in\{\mathcal{L},\mathcal{D}\}$;
for $\mathcal{S}=\mathcal{P}$ we have $\alpha_i=i$ and $(xy)^{2^{i}}\equiv x^{2^i}y^{2^i} c_{2^{i}}\pmod Z$, so in particular $c_{2^i+1}\in S_iZ$, and, since $G^{2^i}\le \Phi_i(G)$, we also have $c_{2^{i}+1}\in S_iZ$ if  $\mathcal{S}=\mathcal{F}$;
and for $\mathcal{S}=\mathcal{M}$ this follows from \cite[Prop.~2.6(1)]{KT}.
In particular, this implies that $\gamma_{2^{\alpha_i}+1}(G)\le S_iZ\cap H$.

Thus, since Lemma \ref{lemma commutator identities} yields $h_{0,m}\in \gamma_{2^{\alpha_i}+1}(G)Z$ for every $m\in\N$, it follows that
$$
[h_{0,m},H]\le [\gamma_{2^{\alpha_i}+1}(G)Z,H]\le [S_iZ\cap H,H]\le S_i\cap Z=1.
$$
Note also that $[h_{n,m},h_{r,s}]\in K\cap Z=1$ for all $1\le n,r\le d$ and $m,s\in \N_0$. Finally, we have $h_{n,m}^{\,2}\in K\cap Z=1$ for all $1\le n\le d$ and $m\in\N_0$, so the claim follows.


Now, since $\gamma_{n_i}(G)\cap Z\le S_i\cap Z=1$, we have
$$
|\langle h_{0,m}^{\,2}, [h_0, h_{n,m}]\mid m\in\N_0, \, 1\le n\le d \rangle|\le 2^{(d+1)n_i}.
$$

Suppose now $k,s\not\in Z$. If there exists another element $t\in S_H$ such that $kt\in Z$, then
$$
s^{-1}t=(ks)^{-1}kt\in S_i\cap Z=1,
$$
so $s=t$. Thus, for each $k\in K_H$ there exists at most one element $z$ in $Z$ such that $ks=z$ for some $s\in S_H$.
In particular, this shows that there are at most $|K_H|$ elements $g=ks$ in $Z$ such that $k,s\not\in Z$.
Now, since the nilpotency class of $K_H$ is~$2$, it is easy to see that $K_H\cap Z=\Phi(K_H)$, and so
$$
|K_H|=|K_H:\Phi(K_H)||\Phi(K_H)|\le 2^{(d+1)n_i}2^{(d+1)n_i}=2^{(2d+2)n_i}.
$$

Finally, summing up, we get that
$$
|KS_i\cap Z|\le 2^{(d+1)n_i}+2^{(2d+2)n_i}\le 2^{(2d+2)n_i+1},
$$
as desired.
\end{proof}

\begin{corollary}\label{cor:reduce-to-Z}
Let $\mathcal{S}\in\{\mathcal{L},\mathcal{D},\mathcal{P},\mathcal{M},\mathcal{F}\}$ and let $K=\langle x^{2^l}h, h_1,\ldots,h_d\rangle$ be a finitely generated closed subgroup of~$G$, for some $l, d\in\mathbb{N}_0$ and $h,h_1,\ldots, h_d\in H$. Then
\[
\hdim_G^{\mathcal{S}}(K)=\hdim_Z^{\mathcal{S}|_Z}(K\cap Z).
\]
\end{corollary}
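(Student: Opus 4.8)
The plan is to reduce the whole computation to counting indices inside $Z$, using the Barnea--Shalev formula together with Lemma~\ref{lemma hasudorff dimension} and the fact, recalled in Section~\ref{sec:3}, that $Z$ has \emph{strong} Hausdorff dimension~$1$ in~$G$ with respect to each series $\mathcal{S}\in\{\mathcal{L},\mathcal{D},\mathcal{P},\mathcal{M},\mathcal{F}\}$. Writing $\mathcal{S}\colon G=S_0\ge S_1\ge\cdots$, the starting point is a factorisation of the index of~$K$ for each~$i$. First I would record the elementary identity
\[
KS_i\cap ZS_i=(KS_i\cap Z)S_i,
\]
which holds because $S_i\le KS_i$, and combine it with the second isomorphism theorem applied to the normal subgroup $ZS_i\trianglelefteq G$ (using $KS_i\cdot ZS_i=KZS_i$) to obtain
\[
\log_2|KS_i:S_i|=\log_2|KZS_i:ZS_i|+\log_2|KS_i\cap Z:S_i\cap Z|.
\]
This splits the index of~$K$ into a ``top'' contribution living in the quotient $G/Z\cong W$ and a ``base'' contribution living in~$Z$.

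Next I would dispose of the top contribution. Since $KZS_i/ZS_i$ is a subgroup of $G/ZS_i\cong W/(ZS_i/Z)$, one has $\log_2|KZS_i:ZS_i|\le\log_2|G:ZS_i|$, and
\[
\log_2|G:ZS_i|=\log_2|G:S_i|-\log_2|Z:Z\cap S_i|.
\]
Because $Z$ has strong Hausdorff dimension~$1$, the ratio $\log_2|Z:Z\cap S_i|/\log_2|G:S_i|$ tends to~$1$ as a proper limit; hence $\log_2|KZS_i:ZS_i|=o(\log_2|G:S_i|)$ and, at the same time, $\log_2|Z:S_i\cap Z|/\log_2|G:S_i|\to 1$. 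Thus the top part is asymptotically negligible and the two normalising denominators $\log_2|G:S_i|$ and $\log_2|Z:S_i\cap Z|$ become interchangeable in the limit.

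For the base contribution I would invoke Lemma~\ref{lemma hasudorff dimension}, which yields
\[
\log_2|KS_i\cap Z:S_i\cap Z|=\log_2|(K\cap Z)(S_i\cap Z):S_i\cap Z|+o(\log_2|Z:S_i\cap Z|).
\]
Dividing the displayed factorisation by $\log_2|G:S_i|$, substituting these three estimates, and passing to $\varliminf_{i\to\infty}$ (the error terms converging to~$0$, so not affecting the lower limit) then gives
\[
\hdim_G^{\mathcal{S}}(K)=\varliminf_{i\to\infty}\frac{\log_2|(K\cap Z)(S_i\cap Z):S_i\cap Z|}{\log_2|Z:S_i\cap Z|}=\hdim_Z^{\mathcal{S}|_Z}(K\cap Z),
\]
the final equality being the Barnea--Shalev formula for $K\cap Z\le_{\mathrm c}Z$ with respect to $\mathcal{S}|_Z$.

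The only substantive input beyond bookkeeping is Lemma~\ref{lemma hasudorff dimension}. The step that most needs care is the control of the top part $\log_2|KZS_i:ZS_i|$, which a priori records how the finitely generated image of~$K$ in the wreath product $W\cong G/Z$ meets the filtration; I expect this to be the natural place where one might be tempted to analyse finitely generated subgroups of~$W$. The point to highlight is that no such structural analysis is needed: the crude bound $\log_2|KZS_i:ZS_i|\le\log_2|G:ZS_i|$ already forces this term to vanish in the limit, purely on account of the strong Hausdorff dimension~$1$ of~$Z$.
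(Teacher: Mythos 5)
Your proof is correct, and it follows the same overall strategy as the paper (split the index of $K$ into a part above $Z$ and a part inside $Z$, discard the top, then invoke Lemma~\ref{lemma hasudorff dimension}), but the way you handle the top part is genuinely different and somewhat more self-contained. The paper first recalls from the proof of \cite[Thm.~2.10]{KT} that the image $KZ/Z$ has \emph{strong} Hausdorff dimension in the wreath product $W$, and then applies the general splitting result \cite[Lem.~2.2]{KT} (which needs strong Hausdorff dimension both for $Z$ in $G$ and for $KZ/Z$ in $G/Z$) to land directly on the formula $\hdim_G^{\mathcal{S}}(K)=\varliminf_k \log_2|KS_k\cap Z:S_k\cap Z|/\log_2|Z:S_k\cap Z|$. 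You instead derive the index factorisation by hand (Dedekind's law plus the second isomorphism theorem) and then observe that, because $\hdim_G^{\mathcal{S}}(Z)=1$ as a proper limit, the crude bound $\log_2|KZS_i:ZS_i|\le\log_2|G:ZS_i|=o(\log_2|G:S_i|)$ already annihilates the top contribution, with no structural input about finitely generated subgroups of $W$ whatsoever. What your route buys is the elimination of one external citation and of the hypothesis on $KZ/Z$; what it gives up is generality, since your crude bound exploits the specific value $\hdim_G^{\mathcal{S}}(Z)=1$, whereas the lemma the paper cites would also apply when $Z$ has strong Hausdorff dimension strictly less than~$1$. For the corollary at hand both arguments are complete; the remaining step (replacing $KS_i\cap Z$ by $(K\cap Z)(S_i\cap Z)$ up to an error that is $o(\log_2|Z:S_i\cap Z|)$ and hence does not affect the lower limit) is identical in the two treatments.
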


\begin{proof}
Recall, from the proof of~\cite[Thm.~2.10]{KT}, that $KZ/Z$ has strong Hausdorff dimension in~$W$. Hence, as $Z$ has strong Hausdorff dimension in~$G$, it follows from \cite[Lem.~2.2]{KT} that 
\begin{equation*}
\hdim_G^{\mathcal{S}}(K)= \varliminf_{k \to \infty} \frac{\log_2
                             \lvert KS_k \cap Z : S_k \cap Z \rvert}{\log_2
                             \lvert Z : S_k \cap Z \rvert},
\end{equation*}
where $\mathcal{S}$ is given by $\mathcal{S}\colon G = S_0 \ge S_1\ge \cdots$. Moreover, by Lemma~\ref{lemma hasudorff dimension} we have
\begin{equation*}
\begin{split}
   &\varliminf_{k \to \infty} \frac{\log_2|KS_k \cap Z : S_k \cap Z|}{\log_2|Z : S_k \cap Z|}\\
    &\quad\hspace{-10pt}=\lim_{k \to \infty}\frac{\log_2|KS_k \cap Z :(K\cap Z)(S_k\cap Z)|}{\log_2|Z : S_k \cap Z|}+\varliminf_{k \to \infty}\dfrac{\log_2|(K\cap Z)(S_k\cap Z):S_k \cap Z|}{\log_2|Z : S_k \cap Z|}\\
    &\quad\hspace{-10pt}=\varliminf_{k \to \infty}\frac{\log_2|(K\cap Z)(S_k\cap Z):S_k \cap Z|}{\log_2|Z : S_k \cap Z|},
\end{split}
\end{equation*}
and thus $\hdim_G^{\mathcal{S}}(K)=\hdim_Z^{\mathcal{S}|_Z}(K\cap Z)$, as required.
\end{proof}

For a subgroup $K=\langle x^{2^l}h, h_1,\ldots,h_d\rangle$ of $G$, and, in view of Corollary \ref{cor:reduce-to-Z}, for the purpose of computing $K\cap Z$, we will fix the notation used in the proof of Lemma \ref{lemma hasudorff dimension}.
Thus, for every $1\le n\le d$ and every $m\in\N_0$, we write $h_{n,m}=[h_n,x^{2^l}h,\overset{m}{\ldots},x^{2^l}h]$. Suppose  $i_1,\ldots, i_d\in \mathbb{N}$ are such that
\begin{align*}
    h_1\in \gamma_{i_1}(G)Z\backslash \gamma_{i_1+1}(G)Z,\,\,\ldots\,\,,
    h_d\in \gamma_{i_d}(G)Z\backslash \gamma_{i_d+1}(G)Z.
\end{align*}
Notice by Lemma~\ref{lemma commutator identities} that  $h_{n,m}\equiv c_{i_n+m2^l}\pmod{\gamma_{i_n+m2^l+1}(G)Z}.$
Also, we set $I=\{i_n+m2^l\mid 1\le n\le d,\, m\in\N_0\}$, and for every $j\in I$, we write $c^*_j=h_{n,m}$, where $n,m$ are such that $j=i_n+m2^l$.
If $j\in\N\backslash I$, then we just write $c^*_j=c_j$.
Finally, for every $j,k\in\N$ define $z_{j,k}^*=[c_j^*,c_k^*]$ and consider the set
$$
B_Z=\{(c_i^*)^2,z_{j,k}^*\mid i,j,k\in\N\}.
$$
Thus, since $c_j^*\equiv c_j\pmod{\gamma_{j+1}(G)Z}$ for every $j\in \N$, it follows that
\begin{equation}\label{eq:basis-change}
z_{j,k}^*\equiv z_{j,k}\pmod{\gamma_{j+k+1}(G)\cap \langle z_{m,n}\mid m\ge j,\,n\ge k\rangle}.
\end{equation}
Hence, the set~$B_Z$ is a basis for~$Z$.

Let now
$$
K_Z=\langle(c_i^*)^2,z^*_{j,k}\mid i,j,k\in I\rangle.
$$
It is clear that $K=\langle x^{2^l}h\rangle K_H$ where, as in Lemma~\ref{lemma hasudorff dimension},
$$
K_H=\langle h_{n,m}\mid 1\le n\le d,\, m\in\N_0\rangle.
$$
Since $x^{2^l}h$ normalises $K_H$, it follows that a general element $g\in K$ can be written as
$$
g=(x^{2^{l}}h)^{\alpha}c^*_{j_1}\cdots c^*_{j_n}z,
$$
with $\alpha\in\Z_2$, $n\in\N_0$, $j_r<j_{r+1}$ for every $1\le r\le n-1$ and $z\in K_Z$.
Now, if $g\in Z$, then we must have $\alpha=0$ and $n=0$, and so $K\cap Z=K_Z$.

\

Now that we explicitly know the subgroup $K\cap Z$, the strategy that we will follow for computing its Hausdorff dimension in $Z$ will be based on counting \emph{blocks}.
For a fixed $l\in\N_0$ and for every $r,s\in\N_0$ with $s\le r$, we define a \emph{block} in~$Z$ as
$$
B_{r,s}=\langle z^*_{i+r2^l,j+s2^l}\mid 1\le i,j\le 2^l\rangle.
$$

\begin{lemma}
\label{lemma blocks}
Fix $l,d\in\N_0$, let $\mathcal{S}\in\{\mathcal{L},\mathcal{D},\mathcal{M},\mathcal{F}\}$ and let $K=\langle x^{2^l}h, h_1,\ldots,h_d\rangle$ be a subgroup of~$G$ as defined above.
Write $m(k)=\min\{j\mid c_j^{\,2}\in S_k\}$ and let
$$
\Delta_k=
\begin{cases}
\langle B_{r,s}\mid B_{r,s}\cap S_k\neq 1\rangle\langle (c_j^*)^2\mid j\ge m(k)\rangle & \text{if }\ \mathcal{S}\in\{\mathcal{L},\mathcal{D},\mathcal{M}\},\\
\langle B_{r,s}\mid B_{r,s}\cap \Theta_k\neq 1 \rangle\langle (c^*_j)^2\mid j\ge m(k)\rangle \Psi_k \Lambda_k & \text{if }\ \mathcal{S}=\mathcal{F}.
\end{cases}
$$
Then, for the filtration series $\mathcal{S}^*:Z=\Delta_0\ge\Delta_1\ge\cdots$ of~$Z$, we have 
$$
\hdim_{Z}^{\mathcal{S}\mid_Z}(K\cap Z)=\hdim_Z^{\mathcal{S}^*}(K\cap Z).
$$
\end{lemma}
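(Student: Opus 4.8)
The plan is to reduce the claim to the nested comparison lemma \cite[Lem.~2.2]{KTZR}, exactly as in the proof of Corollary~\ref{cor:M-P}. First I would check that $\mathcal{S}^*$ really is a filtration series of~$Z$: since $S_{k+1}\le S_k$ (respectively $\Theta_{k+1}\le\Theta_k$, $\Psi_{k+1}\le\Psi_k$, $\Lambda_{k+1}\le\Lambda_k$) and $m(k)$ is non-decreasing, the chain $Z=\Delta_0\ge\Delta_1\ge\cdots$ is descending; each $\Delta_k$ is open, omitting only finitely many elements of the basis $B_Z$; and $\bigcap_k\Delta_k=1$ because every block and every $(c_j^*)^2$ is eventually excluded. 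The heart of the argument is then to establish the two statements
\[
S_k\cap Z\ \le\ \Delta_k
\qquad\text{and}\qquad
\lim_{k\to\infty}\frac{\log_2|\Delta_k:S_k\cap Z|}{\log_2|Z:\Delta_k|}=0,
\]
from which \cite[Lem.~2.2]{KTZR} yields $\hdim_Z^{\mathcal{S}\mid_Z}(K\cap Z)=\hdim_Z^{\mathcal{S}^*}(K\cap Z)$, the finer series $\mathcal{S}\mid_Z$ sitting inside $\mathcal{S}^*$ with sublinear relative index.

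For the containment $S_k\cap Z\le\Delta_k$ I would work in the starred basis $B_Z$ and use that, for each of the series, the subgroup $S_k\cap Z$ is determined up to $O(n_k)$ of the basis elements by $\gamma_{n_k}(G)\cap Z=\langle c_l^{\,2},z_{m,n}\mid l\ge n_k,\ m+n\ge n_k\rangle$ (and for $\mathcal{S}=\mathcal{F}$ by $\Phi_k(G)\cap Z$, described through Proposition~\ref{lemma Frattini explicit} as $Q_k\Psi_k\Lambda_k\Theta_k$ together with the $z_{m,n}$, $m\ge 2^k$). Passing from the $z_{m,n}$ and $c_l^{\,2}$ to $z^*_{m,n}$ and $(c_l^*)^2$ via \eqref{eq:basis-change} costs only strictly deeper starred generators, and a downward induction on the weight $m+n$ shows these corrections already lie in $\Delta_k$; hence every $z_{m,n}\in S_k\cap Z$ sits in a block meeting $S_k$ (respectively $\Theta_k$), and every $c_l^{\,2}\in S_k\cap Z$ has $l\ge m(k)$, giving $S_k\cap Z\le\Delta_k$. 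For $\mathcal{S}=\mathcal{F}$ the parts $\Psi_k$ and $\Lambda_k$ are retained verbatim in $\Delta_k$, so they contribute nothing to the eventual discrepancy; this is essential, since $\log_2|\Lambda_k\Theta_k:\Theta_k|$ grows quadratically in~$2^k$ by Proposition~\ref{lemma Frattini explicit} and could never be absorbed into a sublinear error.

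The sublinearity of $\log_2|\Delta_k:S_k\cap Z|$ is then a counting estimate in blocks. The only generators of $\Delta_k$ that may fail to lie in $S_k\cap Z$ are those in \emph{boundary blocks} — blocks $B_{r,s}$ meeting $S_k$ (respectively $\Theta_k$) without being contained in it — together with the finitely many surplus $(c_j^*)^2$ in the tail $j\ge m(k)$. The defining region of $S_k\cap Z$ in the $(m,n)$-plane is cut out by the anti-diagonal $m+n=n_k$ for $\mathcal{S}\in\{\mathcal{L},\mathcal{D}\}$, by the vertical condition $m>2^k$ for $\mathcal{S}=\mathcal{M}$, and by the lines $m=2^{k-1}$, $n=2^{k-1}$, $m=2^k$ for $\Theta_k$; in each case the boundary meets only $O(n_k/2^l)$ of the $2^l\times 2^l$ blocks, so the block error is $O(n_k\,2^l)$, while the $c^2$-tail error is $O(n_k)$. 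Since for fixed~$l$ the denominator satisfies $\log_2|Z:\Delta_k|\sim\log_2|Z:S_k\cap Z|$ and this is of order $n_k^{\,2}$ (using $\log_2|Z:\gamma_i(G)\cap Z|\sim i^2/4$, and the order-$2^{2k}$ estimate of the proof of Corollary~\ref{cor:M-P} in the Frattini case), the ratio $O(n_k2^l)/n_k^{\,2}\to 0$ by \eqref{eq division}.

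The main obstacle I anticipate is precisely the bookkeeping of the second paragraph: controlling how the deeper-weight corrections introduced by the basis change \eqref{eq:basis-change} interact with the block boundaries, so that the containment $S_k\cap Z\le\Delta_k$ holds on the nose and no generator is miscounted across the two bases. Once this is in place, the block count and the invocation of \cite[Lem.~2.2]{KTZR} are routine.
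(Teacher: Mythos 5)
Your proposal is correct and follows essentially the same route as the paper: establish $S_k\cap Z\le\Delta_k$ by rewriting $\gamma_{n_k}(G)\cap Z$ (respectively $\Theta_k$, and the retained factors $\Psi_k\Lambda_k$) in the starred basis via \eqref{eq:basis-change}, then bound $\log_2|\Delta_k:S_k\cap Z|$ by counting the $O(n_k/2^l)$ boundary blocks of size at most $2^{2l}$ plus the $c^2$-tail, and conclude with \cite[Lem.~2.2]{KTZR}. Your downward induction on the weight $m+n$ is just an explicit justification of the paper's equations \eqref{eq:gamma_k+1} and \eqref{eq:Theta_k}, and your estimates match the paper's.
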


\begin{proof}
Note that
\begin{align*}
\Delta_k=
&\begin{cases}
\langle B_{r,s}\mid B_{r,s}\cap \gamma_{k+1}(G)\neq 1 \rangle\langle (c_j^*)^2\mid j\ge k\rangle & \text{if }\mathcal{S}=\mathcal{L},\\
\langle B_{r,s}\mid B_{r,s}\cap \gamma_{k+1}(G)\neq 1 \rangle\langle (c_j^*)^2\mid j\ge \lceil (k+1)/2\rceil\rangle & \text{if }\mathcal{S}=\mathcal{D},\\
\langle B_{r,s}\mid r\ge 2^{k-l}\rangle\langle (c^*_j)^2\mid j\ge 2^k+1\rangle & \text{if }\mathcal{S}=\mathcal{M},
\end{cases}
\end{align*}
and  $B_Z$ being a basis for~$Z$, we can express the generators of $\gamma_{k+1}(G)\cap Z$ and $\Theta_k$ in terms of the elements in~$B_Z$.
More precisely, one can easily see that
\begin{equation}\label{eq:gamma_k+1}
\gamma_{k+1}(G)\cap Z=\langle (c^*_l)^2,z^*_{m,n}\mid l\ge k+1,\,\,1\le n<m,\,\, m+n\ge k+1 \rangle
\end{equation}
and
\begin{equation}\label{eq:Theta_k}
    \Theta_k=\langle z^*_{m,n},\,z^*_{m',n'}\mid m,n\ge 2^{k-1},\,m'\ge 2^k,\,n'\in\N\rangle.
\end{equation}
Therefore, we clearly have $S_k\cap Z\le\Delta_k$.
Consider now the following family of blocks:
\begin{equation*}
    \begin{split}
        \mathcal{B}
        & = \begin{cases}
        \{B_{r,s}\mid B_{r,s}\le\Delta_k,\, B_{r,s}\not\le S_k \text{ and } s\le r\} & \text{if }\ \mathcal{S}\in\{\mathcal{L},\mathcal{D},\mathcal{M}\},\\
        \{B_{r,s}\mid B_{r,s}\le\Delta_k,\, B_{r,s}\not\le \Theta_k \text{ and } s\le r\} & \text{if }\ \mathcal{S}=\mathcal{F}.
        \end{cases}
    \end{split}
\end{equation*}

If $\mathcal{S}\in\{\mathcal{L},\mathcal{D}\}$, then $B_{r,s}\in\mathcal{B}$ only if $(r+s)2^l\le k-2$ and $(r+s+2)2^l\ge k+1$, 
so $|\mathcal{B}|\le 2\lfloor (k+1)/2^{l+1}\rfloor+1$.
For $\mathcal{S} =\mathcal{M}$, we have $|\mathcal{B}|= 0$ and for $\mathcal{S} =\mathcal{F}$, we have $|\mathcal{B}|=2^{k-l}-1$.

Notice also that $\log_2|B_{r,s}|\le 2^{2l}$, hence, if $\mathcal{S}\in\{\mathcal{L},\mathcal{D},\mathcal{M}\}$, then we have
\begin{equation*}
    \begin{split}
        \lim_{k\rightarrow\infty}\frac{\log_2|\Delta_k:S_k\cap Z|}{\log_2|Z:\Delta_k|}
        \le
        \lim_{k\rightarrow\infty}\frac{|\mathcal{B}|2^{2l}}{\log_2|Z:\gamma_{k+1-2^{l+1}}(G)\cap Z|}
        = 0,
    \end{split}
\end{equation*}
while if $\mathcal{S}=\mathcal{F}$, we have
\begin{equation*}
    \begin{split}
        \lim_{k\rightarrow\infty}\frac{\log_2|\Delta_k:S_k\cap Z|}{\log_2|Z:\Delta_k|}
        \le
        \lim_{k\rightarrow\infty}\frac{|\mathcal{B}|2^{2l}}{\log_2|Z:\gamma_{2^k-2^{l+1}}(G)\cap Z|}
        = 0
    \end{split}
\end{equation*}
by \cite[Proof of Thm.~4.5]{HT}.

Now, \cite[Lem.~2.2]{KTZR} yields the result.
\end{proof}

\begin{proof}[Proof of Theorem~\ref{thm:main} (For the case $p=2$)]
Let $K\le G$ be a finitely generated closed subgroup of~$G$. We observe that if $K\le H$, then $K$ is finite and hence $\hdim_G^{\mathcal{S}}(K)=0$. Therefore we suppose that $K\not \le H$.

Without loss of generality we may assume, as done in (\ref{eq:one-x}), that $K$ has only one generator of the form $x^{2^l}h$ with $l\in\N_0$ and $h\in H$. In addition, by Lemma~\ref{lemma remove generators}, we may further assume that
$$
K=\langle x^{2^l}h,h_1,\ldots, h_d \rangle
$$
for some $d\in \mathbb{N}_0$ and $h_1,\ldots,h_d\in H\backslash Z$, and by Corollary~\ref{cor:reduce-to-Z}, we have that $\hdim_G^{\mathcal{S}}(K)=\hdim_Z^{\mathcal{S}|_Z}(K\cap Z)$.
Also, following the notation introduced before Lemma \ref{lemma blocks}, we have
$$
K\cap Z=\langle(c_i^*)^2,z^*_{j,k}\mid i,j,k\in I\rangle.
$$


Let $i_1,\ldots, i_d\in \mathbb{N}$ be such that
\begin{align*}
    h_1\in \gamma_{i_1}(G)Z\backslash \gamma_{i_1+1}(G)Z,\,\,\ldots\,\,,
    h_d\in \gamma_{i_d}(G)Z\backslash \gamma_{i_d+1}(G)Z,
\end{align*}
where, in the spirit of~(\ref{eq:one-x}), we may assume that $i_1<\cdots < i_d$ and further that $i_1,\ldots,i_d$ are pairwise non-equivalent modulo $2^l$. Indeed, suppose without loss of generality that $i_d\equiv i_r \pmod {2^l}$ for some $1\le r\le d-1$. As mentioned before, we have
$$
[h_n,x^{2^l}h,\overset{m}{\ldots},x^{2^l}h]\equiv c_{i_n+m2^l}\pmod{\gamma_{i_n+m2^l+1}(G)Z}
$$
for all $n\in\{1,\ldots,d\}$ and $m\in\N_0$. In particular
$$
[h_r,x^{2^l}h,\overset{m}{\ldots},x^{2^l}h]\equiv h_d\pmod{\gamma_{i_d+1}(G)Z}
$$
for some $m> 0$, and so repeating the standard cancelling process as in~(\ref{eq:one-x}), either, after several steps,
\begin{enumerate}
\item[$\bullet$] the process stops if it gives a generator $\tilde{h}\in Z$ (in this case, we can ignore this generator by Lemma~\ref{lemma remove generators});

\item[$\bullet$] the process stops if it gives a generator $\tilde{h}\in\gamma_j(G)Z\backslash\gamma_{j+1}(G)Z$ where $j\not \equiv i_1,\ldots,i_{d-1} \pmod {2^l}$; or

\item[$\bullet$] at each cancelling step, the new generator $\tilde{h}\in\gamma_j(G)Z\backslash\gamma_{j+1}(G)Z$ satisfies $j\equiv i_r \pmod {2^l}$, for some $r\in\{1,\ldots,d-1\}$. 
\end{enumerate}
In the third case, 
we may ignore the generator~$\tilde{h}$, since each cancellation process replaces the generator $\tilde{h}\in\gamma_j(G)Z\backslash\gamma_{j+1}(G)Z$ with a generator~$\hat{h}\in\gamma_\nu(G)Z\backslash\gamma_{\nu+1}(G)Z$, where $\nu>j$. In other words, we can replace the generator $\tilde{h}$ with a generator~$\hat{h}$ in $\gamma_{\mu}(G)Z$, with $\mu$ arbitrary large, hence in each finite quotient $\frac{(K\cap Z)(S_k\cap Z)}{ S_k \cap Z}$, the images of the generators of $K\cap Z$ that involve~$\hat{h}$ can be assumed to be trivial.

Next, for $r\in\mathbb{N}$ and setting $Z_{(r)}=\langle z_{m,n}\mid m,n\ge r\rangle$, it can be proved, as done in \cite[Sec.~4]{HT}, that $Z_{(r)}$ has strong Hausdorff dimension~$1$ in~$G$ with respect to~$\mathcal{S}$.
Thus, it is not difficult to see that we can also assume that $i_d\le 2^l$. 
Indeed, suppose that $\lambda2^l< i_d \le(\lambda+1)2^l$ for some $\lambda\in\N$.
It turns out that the generators of $K\cap Z$ of the form~$(c^*_j)^{\,2}$, for $j\in\N$, are insignificant in the computation of $\hdim_Z^{\mathcal{S}|_Z}(K\cap Z)$, hence a direct computation shows that $\hdim_Z^{\mathcal{S}|_Z}(K\cap Z)=\hdim_Z^{\mathcal{S}|_Z}(K\cap Z_{(\lambda2^l)})$. This then equals the Hausdorff dimension of $K\cap Z_{(\lambda2^l)}$ in~$Z_{(\lambda2^l)}$ by \cite[Lem.~5.3]{KTZR}.

Now, from Corollary~\ref{cor:M-P}, it suffices to consider the four filtration series $\mathcal{L}$, $\mathcal{D}$, $\mathcal{M}$ and $\mathcal{F}$.
Moreover, if $\mathcal{S}^*:Z=\Delta_0\ge\Delta_1\ge\cdots$ is as in Lemma \ref{lemma blocks}, then we have $\hdim_Z^{\mathcal{S}\mid_Z}(K\cap Z)=\hdim_Z^{\mathcal{S}^*}(K\cap Z)$.
Define the following disjoint families of blocks:
\begin{equation*}
    \begin{split}
        \mathcal{B}_1
        & =\begin{cases}
        \{B_{r,s}\mid B_{r,s}\cap\Delta_k=1 \text{ and } s<r\}
        & \text{if }\ \mathcal{S}\in\{\mathcal{L},\mathcal{D},\mathcal{M}\},\\
        \{B_{r,s}\mid B_{r,s}\cap\Theta_k=1 \text{ and } s<r\}
        & \text{if }\ \mathcal{S}=\mathcal{F},
        \end{cases}\\
        \mathcal{B}_2
        & =\begin{cases}
        \{B_{r,s}\mid B_{r,s}\cap\Delta_k=1 \text{ and } s=r\}
        & \text{if }\ \mathcal{S}\in\{\mathcal{L},\mathcal{D},\mathcal{M}\},\\
        \{B_{r,s}\mid B_{r,s}\cap\Theta_k=1 \text{ and } s=r\}
        & \text{if }\ \mathcal{S}=\mathcal{F}.
        \end{cases}
    \end{split}
\end{equation*}
For $\mathcal{S}\in\{\mathcal{L},\mathcal{D}\}$, it is routine to see that $|\mathcal{B}_2|\le \lfloor (k+1)/2^{l+1}\rfloor$ and that there exists $q\in\mathbb{Q}$ such that $\lim_{k\rightarrow\infty}|\mathcal{B}_1|/(k+1)^2=q$.
Similarly for $\mathcal{S} =\mathcal{M}$, we have
$|\mathcal{B}_2|= 2^{k-l}$, and there exists $q\in\mathbb{Q}$ such that $\lim_{k\rightarrow\infty}|\mathcal{B}_1|/2^{2k}=q$.
For $\mathcal{S}=\mathcal{F}$, we have $|\mathcal{B}_2|=2^{k-1-l}-1$ and $\lim_{k\rightarrow\infty}|\mathcal{B}_1|/2^{2k}=q$ for some $q\in\Q$.

Note that if $s<r$, then $\log_2|B_{r,s}|=2^{2l}$ and $\log_2|B_{r,s}\cap K|=d^2$,
while otherwise, if $s=r$,
then $\log_2|B_{r,s}|=2^2l-1-2^{l-1}$ and $\log_2|B_{r,s}\cap K|:=t\le d^2$.
With this, we are now ready to compute the Hausdorff dimension of $K$ with respect to $\mathcal{L}$, $\mathcal{D}$ and $\mathcal{M}$; compare (\ref{eq: final}).
Let us focus first, however, on the case where $\mathcal{S}=\mathcal{F}$.
From Proposition~\ref{lemma Frattini explicit} we note that
$\frac{1}{2^{2k}} \log_2\lvert\Psi_k\rvert$
is negligible as $k$ approaches infinity, hence we can ignore~$\Psi_k$.
As done in the proof of Lemma \ref{lemma blocks}, we can also express the generators of  $\Lambda_k$ in terms of the elements in~$B_Z$. More precisely, and referring to~\eqref{eq:generator-basis}, we have $\Lambda_k^*\Theta_k=\Lambda_k\Theta_k$, where
\begin{equation}\label{eq:Lambda_k}
\Lambda^*_k=\langle [z^*_{m,n},x^{2^{i}},x^{2^{i+1}},\ldots, x^{2^{k-1}}]\mid 2\le i\le k-1, \,2^{i-1}\le n<m<2^{i} \rangle.
\end{equation}
Observing  that
\[
\Lambda^*_k\cap K\ge\langle [z_{m,n}^*,x^{2^i},x^{2^{i+1}},\ldots, x^{2^{k-1}}]\mid m,n\in I,\,l\le i\le k-1, \,2^{i-1}\le n<m<2^i \rangle
\]
(where in the above, we emphasise that $i\ge l$)
and that for $2\le i\le k-1$ we have $[z_{m,n}^*,x^{2^i},x^{2^{i+1}},\ldots, x^{2^{k-1}}]\not\in\Lambda^*_k\cap K$ if $m\not\in I$ or $n\not\in I$, it follows that
$$
\log_2|\Lambda^*_k\cap K:\langle [z_{m,n}^*,x^{2^i},\ldots, x^{2^{k-1}}]\mid m,n\in I,\,l\le i\le k-1, \,2^{i-1}\le n<m<2^i \rangle|
$$
approaches $0$ as $k\rightarrow\infty$. 
Writing
\[
    U_1:=\log_2|(\Lambda^*_k\cap K)\Theta_k:\Theta_k|\quad\text{and}\quad
    U_2:=\log_2| \Lambda^*_k\Theta_k:\Theta_k|,
\]
and recalling from Proposition~\ref{lemma Frattini explicit} that the generators  in the presentation of~$\Lambda^*_k$ in~\eqref{eq:Lambda_k} generate it independently modulo~$\Theta_k$, 
we see that $U_1/U_2$ approaches $d^2/2^{2l}$ as $k\rightarrow \infty$.

Finally, removing the assumption that $\mathcal{S}=\mathcal{F}$, we obtain
\begin{equation}
\label{eq: final}
    \begin{split}
        \hdim_Z^{\mathcal{S}^*}(K\cap Z) &=\lim_{k\rightarrow\infty}\frac{\log_2|(K\cap Z)\Delta_k:\Delta_k|}{\log_2|Z:\Delta_k|}\\
        &=
        \lim_{k\rightarrow\infty}\frac
        {|\mathcal{B}_1|d^2+|\mathcal{B}_2|t +\log_2|\langle (c_j^*)^2\mid j\in I, \,j< m(k)\rangle|-\delta_{\mathcal{F}}U_1}
        {|\mathcal{B}_1|2^{2l}+|\mathcal{B}_2|(2^{2l}-2^{l-1}) +\log_2|\langle (c_j^*)^2\mid  j< m(k)\rangle|-\delta_{\mathcal{F}}U_2}\\
        &=
        \lim_{k\rightarrow\infty}\frac
        {|\mathcal{B}_1|d^2 - \delta_{\mathcal{F}}U_1}
        {|\mathcal{B}_1|2^{2l} - \delta_{\mathcal{F}}U_2}\\
        &=d^2/2^{2l},
    \end{split}
\end{equation}
for every $\mathcal{S}\in\{\mathcal{L},\mathcal{D},\mathcal{M},\mathcal{F}\}$, where 
\[
\delta_{\mathcal{F}}=\begin{cases}
1 & \text{if }\mathcal{S}=\mathcal{F},\\
0 & \text{otherwise}.
\end{cases}
\]

As $K$ was arbitrary, it follows that
\[
\hspec^{\mathcal{S}}_{\text{fg}}(G)=\{d^2/2^{2l}\mid  l\in\N,\, 0\le d\le 2^l\}.\qedhere
\]
\end{proof}

\section{The pro-\texorpdfstring{$p$}{p} groups \texorpdfstring{$\mathfrak{G}(p)$}{G(p)} for odd primes}
\label{sec:iterated}

For convenience, we write $\mathfrak{G}=\mathfrak{G}(p)$, for an odd prime~$p$.
In this section we determine the terms of the six filtration series of~$\mathfrak{G}$, or their respective intersections with~$Z$.
Then the proof of Theorem \ref{thm:main} will follow exactly in the same way as for $p=2$.

As the terms of the filtration series $\mathcal{L}$, $\mathcal{D}$ and $\mathcal{M}$ can be found in~\cite{HK}, it remains to settle the filtration series $\mathcal{P}$, $\mathcal{F}$, and $\mathcal{I}$.

We begin by clarifying the terms of the series $\mathfrak{G}^{p^k}\cap Z$. In principle, the analysis of the subgroups $\mathfrak{G}^{p^k}\cap Z$ is analogous to the $p=2$ case, however certain differences arise, due to the fact that $c_i^{\,2}\notin Z$ for $i\in\N$.

Using the same notation for the case $p=2$, for $i,j,k\in\N$, we let
$$
w_{i,j,k}=[z_{i+1,j},x,\overset{p^k-2}
\ldots,x][z_{i+2,j+1},x,\overset{p^k-3}
\ldots,x]\cdots [z_{i+p^k-2,j+p^k-3},x]z_{i+p^k-1,j+p^k-2},
$$
and for $k\in\N$,
$$
L_k=\left\langle w_{i,j,k},\,d_k(y),\, z_{m,n}\mid i,j,n\in\N 
,\,m\ge p^k
\right\rangle^{\mathfrak{G}},
$$
 where
 $d_k(h)\in Z$ is defined as before, that is,
    $$
    (xh)^{p^k}=x^{p^k}[h,x,\overset{p^k-1}{\ldots},x]d_k(h)
    $$
for $h\in H$.

\begin{lemma}
\label{lemma L_k odd}
In the pro-$p$ group~$\mathfrak{G}$, for $k\in\N,$
\begin{enumerate}
    \item[(i)] for every $z\in Z$, we have
    $$
    [z,x,\overset{p^k-1}{\ldots},x]\in L_k;
    $$

    \item[(ii)] for $h_1,h_2\in H$, we have
    $$
    [h_1h_2,x,\overset{p^k-1}{\ldots},x]\equiv[h_1,x,{\overset{p^k-1}\ldots},x][h_2,x,\overset{p^k-1}{\ldots},x]\pmod{L_k};
    $$
    
    \item[(iii)] for every $h\in H$, we have $d_k(h)\in L_k$.
\end{enumerate}
\end{lemma}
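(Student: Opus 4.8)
The plan is to follow the structure of the proof of Lemma~\ref{lemma L_k} for $p=2$, exploiting the crucial simplification that in the odd case $H$ has exponent~$p$; in particular $c_i^{\,p}=1$ for every $i$ and $Z^p=1$. This is exactly why the subgroup $Q_k$ disappears from the statements, so that (i) reads $[z,x,\overset{p^k-1}{\ldots},x]\in L_k$ rather than $\in L_kQ_k$, and why (iii) strengthens from mere additivity modulo~$L_k$ to the genuine membership $d_k(h)\in L_k$. Throughout I would use that $L_k\le Z$ is normal in~$\mathfrak{G}$ and that $[H,Z]=1$, so that all the relevant error terms lie in the abelian group~$Z$ and commute with the $c_i$.

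For part~(i), since $Z=\langle z_{m,n}\mid n<m\rangle$ and the map $z\mapsto[z,x,\overset{p^k-1}{\ldots},x]$ is a homomorphism $Z\to Z$ (as $Z$ is abelian and $x$-conjugation is an automorphism of~$Z$), it suffices to treat $z=z_{m,n}$ with $m>n$. Here I would reproduce the $p=2$ argument verbatim: normality of $L_k$ gives $[w_{m-1,n,k},x]\in L_k$, and the analogue of~\eqref{eq:normal-w} expresses $[w_{m-1,n,k},x]$ as $[z_{m,n},x,\overset{p^k-1}{\ldots},x]$ times $w_{m,n+1,k}$ and $z_{m+p^k-1,n+p^k-1}^{-1}$, both of which lie in~$L_k$; hence $[z_{m,n},x,\overset{p^k-1}{\ldots},x]\in L_k$. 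Part~(ii) is the bilinearity argument of the $p=2$ case unchanged: one defines the bilinear map $w(h_1,h_2)$ with $w(c_i,c_j)=w_{i,j,k}$ and $w(z,h)=w(h,z)=1$, checks $[h_1h_2,x,\overset{p^k-1}{\ldots},x]=[h_1,x,\ldots,x][h_2,x,\ldots,x]\,w(h_1,h_2)$, and notes $w(h_1,h_2)\in L_k$ since all $w_{i,j,k}$ are generators of~$L_k$.

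The substance is part~(iii). First I would establish the additivity $d_k(h_1h_2)\equiv d_k(h_1)d_k(h_2)\pmod{L_k}$ by the same $\zeta$-function computation as in Lemma~\ref{lemma L_k}(iii), which never presupposes the value of any individual~$d_k$. Since every $h\in H$ has the form $c_{i_1}\cdots c_{i_n}z$ with $z\in Z$ and $d_k(z)=1$, additivity reduces the claim to $d_k(c_i)\in L_k$ for all~$i$, which I would prove by induction. The base case $d_k(c_1)=d_k(y)\in L_k$ holds because $d_k(y)$ is, by definition, one of the generators of~$L_k$. For the inductive step I would exploit the identity $xc_i^{\,x}=(xc_i)^x$, which gives $(xc_ic_{i+1})^{p^k}=\big((xc_i)^{p^k}\big)^x$. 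Expanding the left side via~(ii) and additivity, and the right side by conjugating the defining expansion $(xc_i)^{p^k}=x^{p^k}c_{i+p^k-1}d_k(c_i)$ — whose single $c$-factor outside~$Z$ relies on $c_i^{\,p^k}=1$ together with $p\mid\binom{p^k}{j+1}$ (so that $c_{i+j}^{\binom{p^k}{j+1}}=1$ for $1\le j\le p^k-2$) — and then cancelling the common factors $x^{p^k}c_{i+p^k-1}c_{i+p^k}d_k(c_i)$, which is legitimate since they commute with the central error terms, yields the recursion
\[
d_k(c_{i+1})\equiv[d_k(c_i),x]\pmod{L_k}.
\]
If $d_k(c_i)\in L_k$, then $[d_k(c_i),x]\in L_k$ by normality, whence $d_k(c_{i+1})\in L_k$, closing the induction.

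I expect the main obstacle to be organising part~(iii) so as to avoid circularity: the additivity must be proven independently of the membership $d_k(c_i)\in L_k$, the base case must genuinely come from the definition of~$L_k$, and the bookkeeping in the conjugation identity must be carried out carefully, keeping precise track of where $c_i^{\,p}=1$, $Z^p=1$, and the divisibility $p\mid\binom{p^k}{j}$ for $1\le j\le p^k-1$ are invoked.
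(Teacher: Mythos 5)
Parts (i) and (ii) of your argument coincide with what the paper does (it simply invokes the $p=2$ proofs verbatim), and your observation that $Q_k$ disappears because $H^p=1$ is correct. The gap is in part (iii), at the very first step: the additivity $d_k(h_1h_2)\equiv d_k(h_1)d_k(h_2)\pmod{L_k}$ does \emph{not} follow ``by the same $\zeta$-function computation as in Lemma~\ref{lemma L_k}(iii)''. What that computation yields, for any $p$, is only $d_k(h_1h_2)\equiv d_k(h_1)d_k(h_2)\zeta(h_1,h_2)\pmod{L_k}$, and the entire content is then to show $\zeta(c_i,c_j)\in L_k$. For $p=2$ this is done via $\zeta(h,h)\equiv 1\pmod{L_k}$, which is extracted from $d_k(h^2)=1$ (because $h^2\in Z$, as $H^2=Z$) and $d_k(h)^2=1$ (because $Z^2=1$); both inputs are false for odd $p$, where $H$ has exponent $p$ and $h^2\notin Z$. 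Consequently your recursion, derived from $(xc_ic_{i+1})^{p^k}=\big((xc_i)^{p^k}\big)^x$, actually reads $d_k(c_{i+1})\equiv[d_k(c_i),x]\,\zeta(c_{i+1},c_i)^{-1}\pmod{L_k}$, and the induction does not close unless you separately prove $\zeta(c_{i+1},c_i)\in L_k$ --- which is exactly the point that has been assumed away.

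The paper's proof of (iii) supplies precisely this missing ingredient by a mechanism that has no $p=2$ counterpart: combining $\zeta(c_i,c_j)^x=\zeta(c_ic_{i+1},c_jc_{j+1})=\zeta(c_i,c_j)\zeta(c_{i+1},c_j)\zeta(c_i,c_{j+1})\zeta(c_{i+1},c_{j+1})$ with $[\zeta(c_i,c_j),x]\equiv\zeta(c_{i+1},c_j)\pmod{L_k}$ gives $\zeta(c_i,c_{j+1})\equiv\zeta(c_{i+1},c_{j+1})^{\pm1}\pmod{L_k}$; pushing the index up to $p^k$, where $\zeta(c_{p^k},c_j)\in L_k$ because all its factors are of the form $z_{m,n}$ with $m\ge p^k$, yields $\zeta(c_i,c_j)\in L_k$ for all $(i,j)\neq(1,1)$. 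The membership $d_k(h)\in L_k$ for $h\in\gamma_2(\mathfrak{G})$ is then read off from $\zeta(h,h)\equiv d_k(h^2)d_k(h)^{-2}=d_k(h)^{2}\pmod{L_k}$, using $d_k(h^{\lambda})=d_k(h)^{\lambda^2}$ and the fact that $p$ is odd to extract the square root; the general case follows from the generator $d_k(y)$ of $L_k$ together with additivity. If you first establish $\zeta(c_i,c_j)\in L_k$ in this way, your inductive recursion $d_k(c_{i+1})\equiv[d_k(c_i),x]$ starting from $d_k(c_1)=d_k(y)$ would indeed be a viable alternative to the paper's final step; but as written, the proposal omits the essential $\zeta$-analysis, which is the actual substance of the odd-$p$ case.
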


\begin{proof}
(i) and (ii): This is just as in Lemma \ref{lemma L_k}.

(iii) For $h_1,h_2\in H$, as before let  $\zeta(h_1,h_2)=\prod_{\ell=0}^{p^k-2} [h_1^{\,x},x,\overset{\ell}{\ldots},x,h_2,x,\overset{p^k-2-\ell}{\ldots},x]\in Z$. 
Proceeding as in the $p=2$ case,  we obtain
$d_k(h_1h_2)\equiv d_k(h_1)d_k(h_2)\zeta(h_1,h_2)\pmod{L_k}$, and
$\zeta(h_1,h_2)\equiv\zeta(h_2,h_1)\pmod{L_k}$.
 Next, notice that for every $i,j\in\N$ we have
$$
\zeta(c_i,c_j)^x=\zeta(c_i^{\,x},c_j^{\,x})=\zeta(c_ic_{i+1},c_jc_{j+1})=\zeta(c_i,c_j)\zeta(c_{i+1},c_j)\zeta(c_i,c_{j+1})\zeta(c_{i+1},c_{j+1})
 $$
 and from the equivalence
 $[\zeta(c_i,c_j),x]
\equiv\zeta(c_{i+1},c_j)\pmod{L_k}$ in (\ref{equation zeta(ci,cj)}),
 we obtain  $$ \zeta(c_i,c_{j+1})\equiv \zeta(c_{i+1},c_{j+1})\pmod {L_k}.
$$
Therefore, as $\zeta(c_{p^k},c_{j})\in L_k$, it follows that $\zeta(c_i,c_j)\in L_k$ whenever $(i,j)\neq (1,1)$.
Now for $h\in \gamma_2(\mathfrak{G})$, we observe that 
\begin{align*}
\zeta(h,h)&\equiv d_k(h^2) d_k(h)^{-2} \pmod {L_k},
\end{align*}
and as $d_k(h^2)=d_k(h)^4$, we conclude that $d_k(h)\in L_k$.
Since $d(y)\in L_k$ by definition, the result follows.
\end{proof}

With this, the corresponding statements for Proposition \ref{prop:lower-bound-P} and Corollary~\ref{cor:M-P} are obtained:

\begin{proposition}\label{prop:lower-bound-P-odd}
For $k\in\N$,
the pro-$p$ group~$\mathfrak{G}$ satisfies
\begin{align*}
\langle z_{m,n}\mid m\ge p^k, \, n\in N\rangle\le \mathfrak{G}^{p^k}\cap Z \le L_k.
\end{align*}
\end{proposition}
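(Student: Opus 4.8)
The plan is to establish the two inclusions separately, following the template of the proof of Proposition~\ref{prop:lower-bound-P} for $p=2$ but taking advantage of the cleaner behaviour of $L_k$ provided by Lemma~\ref{lemma L_k odd}.

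For the lower inclusion I would begin with the identity
\[
x^{-p^k}(xc_{m-p^k+1})^{p^k}=[c_{m-p^k+1},x,\overset{p^k-1}{\ldots},x]\,d_k(c_{m-p^k+1})=c_m\,d_k(c_{m-p^k+1}),
\]
valid for $m\ge p^k$ by the definition of $d_k$ and the fact that $[c_i,x,\overset{j}{\ldots},x]=c_{i+j}$. The left-hand side lies in $\mathfrak{G}^{p^k}$, being a product of $p^k$-th powers. Since $d_k(c_{m-p^k+1})\in Z$ and $[H,Z]=1$, commuting with $c_n$ annihilates the central factor and yields $z_{m,n}=[c_m,c_n]\in\mathfrak{G}^{p^k}$. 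As $z_{m,n}\in Z$ and $\mathfrak{G}^{p^k}\trianglelefteq\mathfrak{G}$, this proves $\langle z_{m,n}\mid m\ge p^k,\,n\in\N\rangle\le\mathfrak{G}^{p^k}\cap Z$.

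For the upper inclusion I would use that $\mathfrak{G}^{p^k}$ is topologically generated by the $p^k$-th powers of the elements $x^rh$ with $r\in\Z$ and $h\in H$. As in the case $p=2$, a reverse induction on the nilpotency weight of the $H$-component rewrites each such generator, modulo $L_k$, as a product of elements of the form $(x^{p^m}h^*)^{\pm p^k}$, the correction terms being absorbed into a high term $\gamma_j(\mathfrak{G})\cap Z$ which is contained in $\langle z_{m,n}\mid m\ge p^k\rangle\le L_k$. Collecting such a product and applying the additivity of $[\,\cdot\,,x,\overset{p^k-1}{\ldots},x]$ modulo $L_k$ from Lemma~\ref{lemma L_k odd}(ii), every element of $\mathfrak{G}^{p^k}$ becomes congruent to
\[
x^{rp^k}[hh^*,x,\overset{p^k-1}{\ldots},x]\,d_k(h)\pmod{L_k}
\]
for suitable $r\in\Z$, $h\in H$ and $h^*\in\gamma_{p^k}(\mathfrak{G})$. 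The decisive simplification over the prime $2$ is that, by Lemma~\ref{lemma L_k odd}(iii), the factor $d_k(h)$ is already in $L_k$ for every $h$, so no analogue of the subgroup $Q_k$ is required. If such an element lies in $Z$, then comparing components in the top quotient $\mathfrak{G}/H\cong\Z_p$ and in $H/Z$ forces $r=0$ and $hh^*\in Z$, exactly as for $p=2$; Lemma~\ref{lemma L_k odd}(i) then gives $[hh^*,x,\overset{p^k-1}{\ldots},x]\in L_k$, whence the element lies in $L_k$. This yields $\mathfrak{G}^{p^k}\cap Z\le L_k$.

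The main obstacle is the reduction step underlying the upper inclusion: carrying out the reverse induction on the weight of the $H$-component and checking that every correction term lands in a term $\gamma_j(\mathfrak{G})\cap Z$ absorbed by $L_k$. This is essentially the same commutator bookkeeping as in the proof of Proposition~\ref{prop:lower-bound-P}. The genuinely $p$-specific feature, namely that $c_i^2\notin Z$ so that the powers $c_i^2$ play no role here, works in our favour, since Lemma~\ref{lemma L_k odd}(iii) makes all the $d_k$-contributions disappear modulo $L_k$ and removes any need for the subgroup $Q_k$.
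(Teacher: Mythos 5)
Your proposal is correct and follows exactly the route the paper intends: the paper gives no separate argument for this proposition but derives it by transplanting the proof of Proposition~\ref{prop:lower-bound-P} using Lemma~\ref{lemma L_k odd}, which is precisely what you do, including the observation that $d_k(h)\in L_k$ for all $h\in H$ renders $Q_k$ unnecessary. The only cosmetic difference is that you spell out the reduction steps that the paper leaves implicit.
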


\begin{corollary}\label{cor:M-P p odd}
For the pro-$p$ group~$\mathfrak{G}$ and $K\le_{\mathrm{c}} Z$, we have
\[
\hdim_Z^{\mathcal{P}\mid_Z}(K)=\hdim_Z^{\mathcal{M}\mid_Z}(K).
\]
\end{corollary}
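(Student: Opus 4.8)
The plan is to follow the proof of Corollary~\ref{cor:M-P} almost verbatim, substituting Proposition~\ref{prop:lower-bound-P-odd} for Proposition~\ref{prop:lower-bound-P} and keeping track of the extra generator $d_k(y)$ now present in~$L_k$. By \cite[Lem.~2.2]{KTZR} it suffices to exhibit a containment $M_k\cap Z\le \mathfrak{G}^{p^k}\cap Z$ together with the vanishing
\[
\lim_{k\to\infty}\frac{\log_p|\mathfrak{G}^{p^k}\cap Z:M_k\cap Z|}{\log_p|Z:\mathfrak{G}^{p^k}\cap Z|}=0,
\]
after which the two restricted series $\mathcal{P}\mid_Z$ and $\mathcal{M}\mid_Z$ yield the same Hausdorff dimension for every $K\le_{\mathrm c} Z$.

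First I would determine $M_k\cap Z$. Arguing as in the odd-$p$ analogue of \cite[Lem.~3.3(ii)]{HT} provided by \cite{HK}, one obtains $M_k\cap Z=\langle z_{m,n}\mid m>p^k,\ n\in\N\rangle$; note that, unlike the case $p=2$, no powers $c_l^{\,p}$ occur, since $H$ has exponent~$p$. The lower bound of Proposition~\ref{prop:lower-bound-P-odd}, namely $\langle z_{m,n}\mid m\ge p^k,\ n\in\N\rangle\le \mathfrak{G}^{p^k}\cap Z$, then immediately gives $M_k\cap Z\le \mathfrak{G}^{p^k}\cap Z$.

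For the numerator I would replace $\mathfrak{G}^{p^k}\cap Z$ by the larger group $L_k$, using the upper bound of Proposition~\ref{prop:lower-bound-P-odd}, and estimate $\log_p|L_k:M_k\cap Z|$ instead. Exactly as in~\eqref{eq L_k normal closure}, the relation $[w_{i,j,k},x]=w_{i+1,j,k}w_{i,j+1,k}w_{i+1,j+1,k}$ reduces $L_k$ to the normal closure of a finite seed: the single element $d_k(y)$, the diagonal elements $w_{i,i,k}$ for $i$ in a range of length $O(p^{k-1})$, and the boundary row $z_{p^k,n}$ (all remaining $z_{m,n}$ with $m>p^k$ already lying in $M_k\cap Z$). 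Expanding each seed in the basis of~$Z$ and counting the basis elements lying outside $M_k\cap Z$---just as in the explicit description of $L_kQ_k$ in the proof of Corollary~\ref{cor:M-P}---produces a bound $\log_p|L_k:M_k\cap Z|\le C\,p^k$ for an absolute constant~$C$, i.e.\ of linear order in~$p^k$.

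It remains to bound the denominator from below by a quadratic function of~$p^k$. This can be read off from \cite{HK}: there $Z$ is shown to have strong Hausdorff dimension~$1$ in~$\mathfrak{G}$ with respect to~$\mathcal{P}$, and $\log_p|\mathfrak{G}:\mathfrak{G}^{p^k}|$ grows like~$p^{2k}$, so that $\log_p|Z:\mathfrak{G}^{p^k}\cap Z|\gg\binom{p^k}{2}$; alternatively this quadratic lower bound follows directly from $\mathfrak{G}^{p^k}\cap Z\le L_k$, since the $z_{m,n}$ with $m<p^k$ survive modulo~$L_k$ up to $O(p^k)$ exceptions. Dividing the linear numerator by the quadratic denominator, the displayed limit is~$0$, and \cite[Lem.~2.2]{KTZR} completes the proof. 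The main obstacle is the numerator estimate: one must check that the surplus of $L_k$ over $M_k\cap Z$ is genuinely linear, which for odd~$p$ requires handling the conjugates of $d_k(y)$---a term absent when $p=2$---and verifying that $d_k(y)$, the diagonal $w_{i,i,k}$ and the row $z_{p^k,n}$ together account for every basis element of~$L_k$ outside $M_k\cap Z$.
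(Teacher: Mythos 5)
Your proposal is correct and follows essentially the same route as the paper, which leaves this corollary as a direct adaptation of the proof of Corollary~\ref{cor:M-P}: one sandwiches $M_k\cap Z\le \mathfrak{G}^{p^k}\cap Z\le L_k$ via Proposition~\ref{prop:lower-bound-P-odd}, checks that $L_k$ exceeds $M_k\cap Z$ by only linearly many (in $p^k$) independent generators, and divides by the quadratic lower bound on $\log_p|Z:\mathfrak{G}^{p^k}\cap Z|$ before invoking \cite[Lem.~2.2]{KTZR}. Your handling of the two genuine differences from the $p=2$ case --- replacing the equality $G^{2^k}\cap Z=L_kQ_k$ by the upper bound $L_k$, and accounting for the extra seed $d_k(y)$ (whose normal closure contributes only $O(p^k)$ independent elements modulo $M_k\cap Z$, since it is generated by the iterated commutators with $x$) --- is exactly what the adaptation requires.
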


\medskip

Next, in analogue to the even prime case, one can determine the Frattini series~$\mathcal{F}$ precisely, which we include here for completeness. Following the notation in~\cite{HK}, we write $[i]_p=\frac{p^i-1}{p-1}$ for $i\in \N_0$. For
 $k\in\mathbb{N}$   we correspondingly have 
\begin{align*}
\Lambda_k&=
\langle [z_{m,n},x^{p^{i}},x^{p^{i+1}},\ldots, x^{p^{k-1}}]\mid 2\le i\le k-1,\, m,n\ge 1+[i]_p \rangle,\\
\Theta_k&=\langle z_{m,n},\,z_{m',n'}\mid m,n\ge 1+[k-1]_p ,\,m'\ge 1+[k]_p ,\,n'\in\N\rangle.
\end{align*}
Observe that $\gamma_{1+2[k-1]_p+p^{k-1}}(\mathfrak{G})\le \Theta_k$.

\begin{proposition}
For each $k\in\mathbb{N}$, we have
\begin{align*}
\Phi_k(\mathfrak{G})=&\langle x^{p^k},
c_j \mid 
j\ge 1+[k]_p\rangle \Lambda_k\Theta_k.
\end{align*}
\end{proposition}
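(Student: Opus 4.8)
The plan is to run an induction on $k$ parallel to the proof of Proposition~\ref{lemma Frattini explicit}, based on the recursion $\Phi_k(\mathfrak{G})=\Phi_{k-1}(\mathfrak{G})^p[\Phi_{k-1}(\mathfrak{G}),\Phi_{k-1}(\mathfrak{G})]$, with the case $k=1$ being the definition of the Frattini subgroup. The structural gain over the even case is that $H^p=1$ for odd~$p$, so that $c_j^{\,p}=1$ for all $j$ and $Z$ is elementary abelian. This is exactly what removes the factors $Q_k$ and $\Psi_k$ from the statement: in the even case these arose, respectively, from the squares $c_l^{\,2}\in Z$ and from the relation $[c_j^{\,2},x^{2^{k-1}}]\equiv w_{j,k-1}$, and both sources disappear once $c_j^{\,p}=1$.

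First I would dispose of the $p$-power part. Passing to the abelianisation of $\Phi_{k-1}(\mathfrak{G})$ and using the inductive description $\Phi_{k-1}(\mathfrak{G})=\langle x^{p^{k-1}},c_j\mid j\ge 1+[k-1]_p\rangle\Lambda_{k-1}\Theta_{k-1}$, the only generator of infinite order is the image of $x^{p^{k-1}}$, while the images of the $c_j$ and of the central generators have order~$p$. Hence the $p$-th powers contribute only $x^{p^k}$ modulo $[\Phi_{k-1}(\mathfrak{G}),\Phi_{k-1}(\mathfrak{G})]$, giving
\[
\Phi_k(\mathfrak{G})=\langle x^{p^k}\rangle[\Phi_{k-1}(\mathfrak{G}),\Phi_{k-1}(\mathfrak{G})].
\]
Next I would record the inclusions that frame the main congruence: from \cite{HK} one has $\langle c_j\mid j\ge 1+[k]_p\rangle\le\Phi_k(\mathfrak{G})$ together with $c_{[k]_p}\notin\Phi_k(\mathfrak{G})$, and normality of $\Phi_k(\mathfrak{G})$ then yields $z_{m,n}\in\Phi_k(\mathfrak{G})$ for all $m\ge 1+[k]_p$ and $n\in\N$; combined with $c_i\in\Phi_{k-1}(\mathfrak{G})$ for $i\ge 1+[k-1]_p$, this also gives $z_{m,n}\in\Phi_k(\mathfrak{G})$ whenever $m,n\ge 1+[k-1]_p$, whence $\Theta_k\le\Phi_k(\mathfrak{G})$.

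The core of the argument is then the congruence
\[
[\Phi_{k-1}(\mathfrak{G}),\Phi_{k-1}(\mathfrak{G})]\equiv\Lambda_k\pmod{\langle c_j\mid j\ge 1+[k]_p\rangle\Theta_k},
\]
which I would prove by computing the commutators contributed by each factor of $\Phi_{k-1}(\mathfrak{G})$. Using Lemma~\ref{lemma p^k} to append a further $x^{p^{k-1}}$ to the iterated commutators defining $\Lambda_{k-1}$, one checks that $[\Lambda_{k-1}\Theta_{k-1},x^{p^{k-1}}]\equiv\Lambda_k\pmod{\Theta_k}$, the shift of ranges being governed by $[k]_p=[k-1]_p+p^{k-1}$. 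Expanding $[c_j,x^{p^{k-1}}]$ for $j\ge 1+[k-1]_p$ via Lemma~\ref{lemma commutator identities}, every intermediate commutator $[x,c_j,x,\ldots,x]$ carries a binomial-coefficient exponent divisible by $p$ and hence dies because $c_j^{\,p}=1$, leaving only the top term $c_{j+p^{k-1}}\in\langle c_l\mid l\ge 1+[k]_p\rangle$ modulo $\Theta_k$. This is the precise point where the $\Psi_k$-type generators of the even case fail to appear, and it shows that $[\Phi_{k-1}(\mathfrak{G}),\Phi_{k-1}(\mathfrak{G})]$ is exhausted by $\Lambda_k$ modulo the stated terms; assembling the three ingredients closes the induction.

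The main obstacle will be the index bookkeeping dictated by $[k]_p=\frac{p^k-1}{p-1}$. In particular one must verify that the boundary slice $i=k-1$ of $\Lambda_k$ --- the commutators $[z_{m,n},x^{p^{k-1}}]$ with $m,n\ge 1+[k-1]_p$ --- is absorbed into $\Theta_k$, using $m+p^{k-1}\ge 1+[k]_p$ together with Lemma~\ref{lemma p^k}; this matches the observation, recorded just before the proposition, that $\gamma_{1+2[k-1]_p+p^{k-1}}(\mathfrak{G})\le\Theta_k$. A final routine point is confirming that all cross-terms in the $p$-th power expansions lie in $[\Phi_{k-1}(\mathfrak{G}),\Phi_{k-1}(\mathfrak{G})]$, for which the exponent-$p$ property of $H$ and $Z$ suffices.
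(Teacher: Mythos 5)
Your overall strategy is exactly the one the paper intends: the paper gives no proof of this proposition, saying only that it is obtained ``in analogue to the even prime case'', and your plan is precisely the adaptation of the proof of Proposition~\ref{lemma Frattini explicit}. The two structural points you isolate are correct and well explained: modulo $[\Phi_{k-1}(\mathfrak{G}),\Phi_{k-1}(\mathfrak{G})]$ the $p$-th powers are generated by the $p$-th powers of the generators, and since $H^p=1$ only $x^{p^k}$ survives (killing $Q_k$); and in the expansion of $[c_j,x^{p^{k-1}}]$ every intermediate term carries an exponent $\binom{p^{k-1}}{s}$ divisible by $p$ and so dies, leaving only $c_{j+p^{k-1}}$ (killing the $\Psi_k$-type generators). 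The inclusions $\langle c_j\mid j\ge 1+[k]_p\rangle\le\Phi_k(\mathfrak{G})$ and $\Theta_k\le\Phi_k(\mathfrak{G})$ are also handled correctly.

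The genuine gap is in the step you dismiss with ``one checks'', namely $[\Lambda_{k-1}\Theta_{k-1},x^{p^{k-1}}]\equiv\Lambda_k\pmod{\Theta_k}$: with the ranges in the displayed presentation of $\Lambda_k$ read literally ($m,n\ge 1+[i]_p$, as you do when you describe the $i=k-1$ slice), this congruence is false, and the induction does not close. Since $Z$ is elementary abelian, $[z_{m,n},x^{p^{k-1}}]=z_{m+p^{k-1},n}\,z_{m,n+p^{k-1}}\,z_{m+p^{k-1},n+p^{k-1}}$. Applying this to a generator $z_{m,n}$ of $\Theta_{k-1}$ with $1+[k-2]_p\le n<m<1+[k-1]_p$ leaves, modulo $\Theta_k$, the element $z_{m+p^{k-1},n}\,z_{m,n+p^{k-1}}$, whose first factor has indices $m+p^{k-1}<1+[k]_p$ and $n<1+[k-1]_p$ and therefore lies neither in $\Theta_k$ nor in $\Lambda_k$ as printed (every printed generator of $\Lambda_k$ with $i=k-1$ already lies in $\Theta_k$, as you yourself observe). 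Concretely, for $p=3$ and $k=3$ one has $z_{3,2}\in\Theta_2\le\Phi_2(\mathfrak{G})$ and $[z_{3,2},x^{9}]\equiv z_{12,2}\,z_{11,3}^{-1}\pmod{\Theta_3}$, an element of $[\Phi_2(\mathfrak{G}),\Phi_2(\mathfrak{G})]$ that does not lie in $\langle x^{27},c_j\mid j\ge 14\rangle\Lambda_3\Theta_3$ with $\Lambda_3$ as printed. The resolution is that the ranges in $\Lambda_k$ must be bounded above as well as below, mirroring the even case: $1+[i-1]_p\le n<m<1+[i]_p$ (note $2^{\ell-1}=1+[\ell-1]_2$ and $2^{\ell}=1+[\ell]_2$, so this is the exact analogue of $2^{\ell-1}\le n<m<2^{\ell}$). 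With that reading, the elements above are exactly the $i=k-1$ slice of $\Lambda_k$, your congruence holds, and the rest of your argument goes through; but the verification you defer is precisely the place where this has to be caught, so it should be carried out explicitly.
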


\medskip

Finally, we consider the iterated $p$-power series~$\mathcal{I}$ for $\mathfrak{G}$.
This involves a fusion of the techniques used for the $p$-power series and the Frattini series.

For $i,j,k\in\N$, with $i,j\ge p^{k-1}$, we define the elements
\begin{align*}
&\widetilde{w}_{i,j,k}=[z_{i+1,j},x,\overset{p^k-p^{k-1}-1}
\ldots,x]
[z_{i+2,j+1},x,\overset{p^k-p^{k-1}-2}
\ldots,x]
\cdots z_{i+p^{k}-p^{k-1},j+p^k-p^{k-1}-1},
\end{align*}
and for every $h\in \gamma_{p^{k-1}}(\mathfrak{G})$, we define $\widetilde{d}_k(h)\in Z$ as
$$
(x^{p^{k-1}}h)^p=x^{p^k}[h,x,\overset{p^k-p^{k-1}}{\ldots},x]\widetilde{d}_k(h).
$$
Let thus
$$
\widetilde{L}_k=
\langle 
\widetilde{w}_{i,j,k}, \widetilde{d}_k(y) \mid i,j\ge p^{k-1}
\rangle^{\mathfrak{G}},
$$
and, for $k\in\N$, we define 
\begin{align*}
\widetilde{\Theta}_k  &=\langle z_{m,n} \mid m\ge p^k,\, n\in\N\rangle,\\
\widetilde{\Lambda}_k &=\big\langle [z_{m,n},x^{p^{k-1}},\overset{p-1}\ldots, x^{p^{k-1}}] \mid (p^{k-1}\le m< p^k,\,   m-\big\lfloor \tfrac{m}{p^{k-1}}\big\rfloor p^{k-1} \le n< p^{k-1})\\
&\qquad\qquad\quad\qquad\qquad \vee\quad (m\in\{ 2p^{k-1}, 3p^{k-1},\ldots, (p-2)p^{k-1}\}, \,\, n=p^{k-1} )\big\rangle,\\
\widetilde{\Omega}_k &=\big\langle [z,x^{p^{k-1}},\overset{p-1}\ldots, x^{p^{k-1}}] \mid z\in \widetilde{\Omega}_{k-1}\cup \widetilde{\Lambda}_{k-1}\big\rangle,\\
\widetilde{\Psi}_k  &=\big\langle [z,x^{p^{k-1}},\overset{p-1}\ldots, x^{p^{k-1}}] \mid z\in \widetilde{\Psi}_{k-1}\cup\widetilde{L}_{k-1}\big\rangle,
\end{align*}
where $\widetilde{\Omega}_1=\widetilde{\Lambda}_1$ and $\widetilde{\Psi}_1=\widetilde{L}_1$.
These definitions, together with the following lemma, will play the role that Lemma~\ref{lemma L_k}(i) had for the filtration series~$\mathcal{P}$.

\begin{lemma}\label{lem:combinatorial-proof}
For $k\in\N$, we have
\[
[\widetilde{\Theta}_{k-1},x^{p^{k-1}},\overset{p-1}{\ldots},x^{p^{k-1}}]\widetilde{\Theta}_k=\widetilde{\Lambda}_k\widetilde{\Theta}_k.
\]
\end{lemma}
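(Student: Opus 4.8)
The plan is to recast the $(p-1)$-fold commutator as a single $\mathbb{F}_p$-linear operator acting on the group $Z$ (which is elementary abelian, since $Z\le H$ and $H^p=1$ for odd $p$), to compute that operator explicitly modulo $p$, and then to carry out a triangular reduction modulo $\widetilde\Theta_k$. Writing $q=p^{k-1}$ and letting $S_1,S_2$ denote the shift operators $z_{m,n}\mapsto z_{m+1,n}$ and $z_{m,n}\mapsto z_{m,n+1}$ (written additively), I would first record the odd-$p$ analogue of Lemma~\ref{lemma p^k}: exactly as there, conjugation by $x$ acts on the $z_{m,n}$ as $(1+S_1)(1+S_2)$, so by the Frobenius identity $(1+S)^{q}\equiv 1+S^{q}\pmod p$ conjugation by $x^{q}$ acts as $(1+S_1^{q})(1+S_2^{q})$. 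Hence
\[
[z_{m,n},x^{q},\overset{p-1}{\ldots},x^{q}]=\big((1+S_1^{q})(1+S_2^{q})-1\big)^{p-1}(z_{m,n}).
\]
Using $\binom{p-1}{j}\equiv(-1)^{j}\pmod p$ and $(-1)^{p-1}=1$, this operator collapses to $O:=\sum_{j=0}^{p-1}(1+S_1^{q})^{j}(1+S_2^{q})^{j}$, which additionally satisfies the clean relation $\big((1+S_1^{q})(1+S_2^{q})-1\big)\,O=S_1^{pq}+S_2^{pq}+S_1^{pq}S_2^{pq}$ in characteristic $p$. These are the computational inputs.

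The easy inclusion is $\widetilde\Lambda_k\widetilde\Theta_k\le[\widetilde\Theta_{k-1},x^{q},\overset{p-1}{\ldots},x^{q}]\widetilde\Theta_k$: every defining generator of $\widetilde\Lambda_k$ has the form $[z_{m,n},x^{q},\overset{p-1}{\ldots},x^{q}]$ with $m\ge p^{k-1}$, so $z_{m,n}\in\widetilde\Theta_{k-1}$, while $\widetilde\Theta_k$ is contained in the right-hand side trivially. For the reverse inclusion I would reduce modulo $\widetilde\Theta_k=\langle z_{M,N}\mid M\ge p^{k}\rangle$, discarding every summand $z_{m+sq,\,n+tq}$ of $O(z_{m,n})$ whose first index reaches $p^k$; writing $m=\alpha q+\rho$ with $0\le\rho<q$ and $1\le\alpha\le p-1$, only the shifts $0\le s\le p-1-\alpha$ survive.

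A first payoff of the operator form is that the $t=0$ part of $O(z_{m,n})$ equals $\sum_{s}\binom{p}{s+1}z_{m+sq,n}\equiv z_{m+(p-1)q,n}\pmod p$ by the hockey-stick identity, and for $m\ge q$ this already lies in $\widetilde\Theta_k$. Thus, modulo $\widetilde\Theta_k$, every surviving term of $[z_{m,n},x^q,\overset{p-1}{\ldots},x^q]$ has strictly larger second index than $n$, and this is exactly the triangularity that drives the argument. The reverse inclusion is then the generation statement that every $[z_{m,n},x^{q},\overset{p-1}{\ldots},x^{q}]$ with $m\ge p^{k-1}$ and $n\in\N$ lies in $\widetilde\Lambda_k\widetilde\Theta_k$, and I would prove it by the same leading-term induction used for the independence claim in Proposition~\ref{lemma Frattini explicit}: working with a filtration $\widetilde\Theta_{k,s}$ of $Z$ refining $\widetilde\Theta_k$ by the second index, one extracts the surviving monomial $z_{M,N}$ of smallest second index, matches it to the unique designated generator of $\widetilde\Lambda_k$ with that leading monomial, subtracts, and iterates.

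The main obstacle is exactly this final bookkeeping. After reducing modulo $\widetilde\Theta_k$ one must verify that the leading monomials of the designated generators range over precisely the $z_{M,N}$ with $N$ in the interval forced by the floor-function condition $m-\lfloor m/q\rfloor q\le n<q$, and must separately account for the boundary second index $N=q$, which is what produces the Case~2 generators with $m\in\{2q,3q,\ldots,(p-2)q\}$ and $n=q$. Matching these index ranges and degenerate cases — keeping track of the relations $z_{m,m}=1$ and $z_{m,n}=z_{n,m}^{-1}$, which force second indices that wrap past $p^k$ back into $\widetilde\Theta_k$ — to the stated presentation of $\widetilde\Lambda_k$ is where essentially all the combinatorial care is concentrated; the group theory is entirely absorbed into the operator identity for $O$.
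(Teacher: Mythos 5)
Your linear-algebra setup is correct and is in substance a repackaging of the paper's starting point: since $Z$ is elementary abelian and central in $H$, conjugation by $x$ acts on the $z_{m,n}$ as $(1+S_1)(1+S_2)$, and your operator $O=\sum_{j=0}^{p-1}(1+S_1^{q})^{j}(1+S_2^{q})^{j}$ (with $q=p^{k-1}$) reproduces exactly the expansion $[z_{m,n},x^{q},\overset{p-1}{\ldots},x^{q}]\equiv\prod_{s=1}^{p-1}\prod_{t=1}^{s}z_{m+(p-1-t)q,\,n+(p-1-s+t)q}^{\binom{p-1}{s}\binom{s}{t}}$ that the paper imports from \cite[Lem.~4.3]{HK}; the hockey-stick observation and the easy inclusion $\widetilde\Lambda_k\widetilde\Theta_k\le[\widetilde\Theta_{k-1},x^{q},\overset{p-1}{\ldots},x^{q}]\widetilde\Theta_k$ are fine. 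The genuine gap is the reverse inclusion, which is the whole lemma: what you offer there is a procedure (extract the leading monomial, match it to the designated generator with that leading monomial, subtract, iterate), not a proof, because nothing shows the iteration terminates at the identity rather than getting stuck at a nonzero remainder whose leading monomial is not the leading monomial of any designated generator.

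To see concretely why this is not bookkeeping that can be waved at, write $m=\alpha q+\mu$, $n=\beta q+\nu$ with $0\le\mu,\nu<q$ and $\gamma=\alpha+\beta\le p-1$. The nonzero coefficients of $O$ are $c_{s,t}=(-1)^{s+t}\binom{2(p-1)-s-t}{p-1-s}$ for $s+t\ge p-1$, so the two extreme surviving terms of $O(z_{m,n})$ modulo $\widetilde\Theta_k$ are $z_{(p-1)q+\mu,\,\gamma q+\nu}$ and $z_{\gamma q+\mu,\,(p-1)q+\nu}$. After canonicalising via $z_{A,B}=z_{B,A}^{-1}$ the leading monomial is therefore $z_{(p-1)q+\max(\mu,\nu),\,\gamma q+\min(\mu,\nu)}$; with the raw second-index ordering you describe, every generator with $\mu>\nu$ fails to match any designated generator already at step one, and the match only appears after the antisymmetry flip. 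Worse, the designated set is not in echelon form: after killing the leading term using the designated generator $O(z_{\gamma q+\min(\mu,\nu),\,\max(\mu,\nu)})$, the next monomial in line, $z_{(p-1)q+\min(\mu,\nu),\,\gamma q+\max(\mu,\nu)}$, is \emph{not} the leading monomial of any designated generator when $\mu\ne\nu$, so the reduction can only continue because its coefficient in the remainder vanishes --- which amounts to the congruence $\binom{p-1}{\beta}\equiv\binom{p-1}{\alpha}\binom{p-1}{\gamma}\pmod p$, true but needing verification, and only the first of a whole family of such cancellations. These cancellations, together with the separate treatment of the diagonal triangles $\{z_{m,n}\mid iq\le n<m<(i+1)q\}$ and of the $\nu=0$ boundary (which is what produces the extra generators with $n=p^{k-1}$), are precisely the content of the paper's relations (\ref{eq:moving-up}), (\ref{eq: symmetry}) and (\ref{eq: W}), organised there as an induction over anti-diagonals modulo the intermediate subgroups $\widetilde\Theta_{k,\tau}$; none of them are established in your proposal. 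As it stands you have proved only the trivial inclusion.
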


The proof of the above lemma, though not difficult, is a rather long combinatorial argument. Therefore we defer the proof to  the appendix. Moreover, for the purpose of computing Hausdorff dimensions, the set of generators given in the presentation of $\widetilde{\Lambda}_k$ is close enough to a minimal generating set for $\widetilde{\Lambda}_k$ modulo $\widetilde{\Theta}_k$; see the appendix for further details.

Below we have the analogues of Lemma~\ref{lemma L_k}(ii) and (iii).

\begin{lemma}
\label{lemma tilde L_k}
In the pro-$p$ group~$\mathfrak{G}$, for $k\in\N,$
\begin{enumerate}
    \item[(i)] for $h_1,h_2\in \gamma_{p^{k-1}}(\mathfrak{G})$, we have
    $$
    [h_1h_2,x,\overset{p^k-p^{k-1}}{\ldots},x]\equiv[h_1,x,{\overset{p^k-p^{k-1}}\ldots},x][h_2,x,\overset{p^k-p^{k-1}}{\ldots},x]\pmod{\widetilde{L}_k};
    $$
    \item[(ii)] for every $h\in \gamma_{p^{k-1}}(\mathfrak{G})$, we have $\widetilde{d}_k(h)\in \widetilde{\Theta}_k\widetilde{\Lambda}_k\widetilde{L}_k$.
\end{enumerate}
\end{lemma}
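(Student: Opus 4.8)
The plan is to follow the template of Lemmas~\ref{lemma L_k} and~\ref{lemma L_k odd}, replacing the single generator $x$ by $x^{p^{k-1}}$ and the exponent $p^k$ by $p$, while carefully tracking the additional $z_{m,n}$ produced by the $x^{p^{k-1}}$-commutators. For part~(i) I would argue exactly as in Lemma~\ref{lemma L_k}(ii): define a correction map $\widetilde w$ on $\gamma_{p^{k-1}}(\mathfrak G)\times\gamma_{p^{k-1}}(\mathfrak G)$ by
\[
[h_1h_2,x,\overset{p^k-p^{k-1}}{\ldots},x]=[h_1,x,\overset{p^k-p^{k-1}}{\ldots},x][h_2,x,\overset{p^k-p^{k-1}}{\ldots},x]\,\widetilde w(h_1,h_2),
\]
collecting the cross terms arising from repeated use of $[ab,x]=[a,x]^b[b,x]$. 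Routine commutator calculus shows $\widetilde w$ is bilinear, that $\widetilde w(z,h)=\widetilde w(h,z)=1$ for $z\in Z$, and that $\widetilde w(c_i,c_j)=\widetilde w_{i,j,k}$ for $i,j\ge p^{k-1}$. Since every element of $\gamma_{p^{k-1}}(\mathfrak G)\cap H$ is, modulo $Z$, a product of generators $c_i$ with $i\ge p^{k-1}$, bilinearity reduces the claim to the generators, where $\widetilde w_{i,j,k}\in\widetilde L_k$ by definition.

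For part~(ii) I would first obtain a multiplicativity relation by expanding $(x^{p^{k-1}}h_2h_1)^p$ in two ways, precisely as in the $d_k$-computation of Lemma~\ref{lemma L_k}(iii). This yields, for $h_1,h_2\in\gamma_{p^{k-1}}(\mathfrak G)$,
\[
\widetilde d_k(h_1h_2)\equiv \widetilde d_k(h_1)\widetilde d_k(h_2)\,\widetilde\zeta(h_1,h_2)\pmod{\widetilde L_k},
\]
where $\widetilde\zeta(h_1,h_2)=\prod_{\ell}[h_1^{\,x},x,\overset{\ell}{\ldots},x,h_2,x,\ldots,x]$ is the direct analogue of the symmetric cross term $\zeta$. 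Using part~(i) one deduces $\widetilde\zeta(h_1,h_2)\equiv\widetilde\zeta(h_2,h_1)$ and $\widetilde\zeta(h,h)\equiv\widetilde d_k(h^2)\widetilde d_k(h)^{-2}\pmod{\widetilde L_k}$. The genuinely new ingredient, compared with the $\mathcal P$-case, is the recursion for $\widetilde\zeta(c_i,c_j)$ obtained by commuting with $x^{p^{k-1}}$: whereas commuting with a single $x$ merely shifts indices and stays in $L_k$ (see~\eqref{equation zeta(ci,cj)}), the odd-$p$ analogue of Lemma~\ref{lemma p^k} turns $[\,\cdot\,,x^{p^{k-1}}]$ into a spray of shifted $z_{m,n}$, which by Lemma~\ref{lem:combinatorial-proof} is absorbed into $\widetilde\Lambda_k\widetilde\Theta_k$. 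This places $\widetilde\zeta(c_i,c_j)$, and hence $\widetilde\zeta(h,h)$, in $\widetilde\Theta_k\widetilde\Lambda_k\widetilde L_k$.

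To close, I would exploit that $Z$ is elementary abelian and $p$ is odd, exactly as in Lemma~\ref{lemma L_k odd}(iii). From the scaling $\widetilde d_k(h^2)=\widetilde d_k(h)^4$ the relation above reads $\widetilde d_k(h)^2\equiv\widetilde\zeta(h,h)\pmod{\widetilde L_k}$, so $\widetilde d_k(h)^2\in\widetilde\Theta_k\widetilde\Lambda_k\widetilde L_k$; since $2$ is invertible modulo $p$ and $\widetilde\Theta_k\widetilde\Lambda_k\widetilde L_k$ is a subgroup of the $\F_p$-vector space $Z$, this forces $\widetilde d_k(h)\in\widetilde\Theta_k\widetilde\Lambda_k\widetilde L_k$ for all $h\in\gamma_2(\mathfrak G)$. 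The generator $y$ is covered by $\widetilde d_k(y)\in\widetilde L_k$ by definition, and the multiplicativity relation then propagates the conclusion to every $h\in\gamma_{p^{k-1}}(\mathfrak G)$.

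I expect the main obstacle to be the combinatorial bookkeeping in the $\widetilde\zeta(c_i,c_j)$-recursion. Unlike the single-$x$ situation, commuting with $x^{p^{k-1}}$ generates $z_{m,n}$ with indices shifted by many partial sums of powers of $p$, and showing that these land exactly in $\widetilde\Theta_k\widetilde\Lambda_k$ rather than overflowing is precisely the content of Lemma~\ref{lem:combinatorial-proof}. One must also verify that the commutators of weight at least $2$ in $c_i$ coming from Lemma~\ref{lemma commutator identities}, whose coefficients $\binom{p}{j}$ for $1\le j\le p-1$ vanish modulo $p$ and hence annihilate the non-central contributions, are themselves absorbed into $\widetilde\Theta_k\widetilde\Lambda_k$.
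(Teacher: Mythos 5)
Your proposal is correct and follows essentially the same route as the paper: part (i) by the bilinear correction term reducing to the generators $\widetilde w_{i,j,k}\in\widetilde L_k$, and part (ii) by the multiplicativity relation $\widetilde d_k(h_1h_2)\equiv\widetilde d_k(h_1)\widetilde d_k(h_2)\widetilde\zeta(h_1,h_2)$ followed by the symmetry/squaring argument of Lemma~\ref{lemma L_k odd} and the invertibility of $2$ modulo the odd prime $p$. The paper merely adds a small bookkeeping step, splitting $\widetilde d_k(h)=d_k^*(h)d_k^{**}(h)$ to derive that relation cleanly, which your sketch subsumes.
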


\begin{proof}
Part (i) follows as in Lemma \ref{lemma L_k}.
For part (ii), let $d^*_k(h)$ and $d^{**}_k(h)$ be such that
$$
(x^{p^{k-1}}h)^p=x^{p^k}[h,x^{p^{k-1}},\overset{p-1}{\ldots},x^{p^{k-1}}]d^*_k(h)
$$
and
$$[h,x^{p^{k-1}},\overset{p-1}{\ldots},x^{p^{k-1}}]=[h,x,\overset{p^k-p^{k-1}}{\ldots},x]d^{**}_k(h),
$$
and note that $\widetilde{d}_k(h)=d^*_k(h)d^{**}_k(h)$.
Now, similar to (\ref{eq: xh2h1}), for $h_1,h_2\in\gamma_{p^{k-1}}(\mathfrak{G})$ we have
\begin{equation*}
\begin{split}
(x^{p^{k-1}}h_2h_1)^p&=(x^{p^{k-1}}h_2)^{p}[h_1,x^{p^{k-1}}h_2,\overset{p-1}{\ldots},x^{p^{k-1}}h_2]d^*_k(h_1)\\
&=x^{p^k}[h_2,x,\overset{p^k-p^{k-1}}{\ldots},x][h_1,x^{p^{k-1}}h_2,\overset{p-1}{\ldots},x^{p^{k-1}}h_2]d^*_k(h_1)\widetilde{d}_k(h_2),
\end{split}
\end{equation*}
and routine computations give
\begin{equation*}
    \begin{split}
        [h_1,x^{p^{k-1}}h_2,&\overset{p-1}{\ldots},x^{p^{k-1}}h_2]\\
        &=[h_1,x,\overset{p^k-p^{k-1}}{\ldots},x]
        \prod_{\ell=0}^{p-2} [h_1^{\,x^{p^{k-1}}},x,\overset{\ell p^{k-1}}{\ldots},x,h_2,x,\overset{(p-2-\ell)p^{k-1}}{\ldots},x]d^{**}_k(h_1).
    \end{split}
\end{equation*}
Writing $\widetilde{\zeta}(h_1,h_2)=\prod_{\ell=0}^{p-2} [h_1^{\,x^{p^{k-1}}},x,\overset{\ell p^{k-1}}{\ldots},x,h_2,x,\overset{(p-2-\ell)p^{k-1}}{\ldots},x]\in Z$, we have
$$
(x^{p^{k-1}}h_2h_1)^{p}=x^{p^k}[h_2,x,\overset{p^k-p^{k-1}}{\ldots},x][h_1,x,\overset{p^k-p^{k-1}}{\ldots},x]\widetilde{d}_k(h_1)\widetilde{d}_k(h_2)\widetilde{\zeta}(h_1,h_2).
$$
Everything now follows as in the proof of Lemma~\ref{lemma L_k odd}.
\end{proof}

\begin{proposition}\label{prop:lower-bound-I}
For $k\in\N$, the pro-$p$ group~$\mathfrak{G}$ satisfies
\begin{align*}
 \widetilde{\Theta}_k\widetilde{\Lambda}_k\widetilde{\Omega}_k
 \le I_k(\mathfrak{G})\cap Z \le \widetilde{\Theta}_k\widetilde{\Lambda}_k\widetilde{\Omega}_k\widetilde{L}_k\widetilde{\Psi}_k.
\end{align*}
\end{proposition}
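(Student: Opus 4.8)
The plan is to argue by induction on $k$, exploiting the defining relation $I_k(\mathfrak{G})=I_{k-1}(\mathfrak{G})^p$. For the base case $k=1$ we have $I_1(\mathfrak{G})=\mathfrak{G}^p$ together with $\widetilde{\Omega}_1=\widetilde{\Lambda}_1$ and $\widetilde{\Psi}_1=\widetilde{L}_1$, so the claim reduces to $\widetilde{\Theta}_1\widetilde{\Lambda}_1\le\mathfrak{G}^p\cap Z\le\widetilde{\Theta}_1\widetilde{\Lambda}_1\widetilde{L}_1$, which follows from Proposition~\ref{prop:lower-bound-P-odd} at $k=1$ once one checks that the extra generators of $\widetilde{\Lambda}_1$ indeed lie in $\mathfrak{G}^p$. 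Throughout I shall use that, as $p$ is odd, $Z$ has exponent $p$ and $[H,Z]=1$, so that the commutator expansions of Lemma~\ref{lemma commutator identities} collapse sharply: every $\binom{p}{j}$-th power $(2\le j\le p-1)$ of a $Z$-commutator is trivial, and the normal closures $K(\cdot,\cdot)$ contribute nothing since commutators of weight at least $2$ in a central factor vanish by $[Z,Z]=1$.

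For the lower inclusion $\widetilde{\Theta}_k\widetilde{\Lambda}_k\widetilde{\Omega}_k\le I_k(\mathfrak{G})\cap Z$ I would produce the three families of generators separately. First, as in the $p=2$ argument of Proposition~\ref{prop:lower-bound-P}, the inductive hypothesis supplies $c_s\in I_{k-1}(\mathfrak{G})Z$ for $s\ge p^{k-1}$; taking $p$-th powers gives $x^{-p^k}(x^{p^{k-1}}c_s)^p\equiv c_{s+p^k-p^{k-1}}\pmod{Z}$, and commuting the resulting element of $I_k(\mathfrak{G})$ with $c_n$ yields the generators $z_{m,n}$ of $\widetilde{\Theta}_k$ with $m\ge p^k$. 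Secondly, the key observation is that for any $z'\in I_{k-1}(\mathfrak{G})\cap Z$ both $x^{p^k}=(x^{p^{k-1}})^p$ and $(x^{p^{k-1}}z')^p$ lie in $I_k(\mathfrak{G})$, so
\[
x^{-p^k}(x^{p^{k-1}}z')^p\equiv[z',x^{p^{k-1}},\overset{p-1}{\ldots},x^{p^{k-1}}]\pmod{\widetilde{\Theta}_k},
\]
all other terms of Lemma~\ref{lemma commutator identities} vanishing for the reasons above. Feeding in $z'\in\widetilde{\Theta}_{k-1}$ and invoking Lemma~\ref{lem:combinatorial-proof} produces $\widetilde{\Lambda}_k$ modulo $\widetilde{\Theta}_k$, whereas feeding in $z'\in\widetilde{\Lambda}_{k-1}\cup\widetilde{\Omega}_{k-1}$ produces exactly the defining generators of $\widetilde{\Omega}_k$; in both cases $z'\in I_{k-1}(\mathfrak{G})\cap Z$ by the inductive lower bound.

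For the upper inclusion I would follow the template of the upper-bound half of Proposition~\ref{prop:lower-bound-P}. Since $I_k(\mathfrak{G})$ is the closure of the group generated by the $p$-th powers of elements of $I_{k-1}(\mathfrak{G})$, and every such element can (after the standard cancellation of~\eqref{eq:one-x}) be brought to the form $g=x^{p^{k-1}r}\widehat{h}z$ with $\widehat{h}\in\gamma_{p^{k-1}}(\mathfrak{G})\cap H$ and, by induction, $z\in\widetilde{\Theta}_{k-1}\widetilde{\Lambda}_{k-1}\widetilde{\Omega}_{k-1}\widetilde{L}_{k-1}\widetilde{\Psi}_{k-1}$, it suffices to expand $g^p$. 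The $x$-part contributes $x^{p^kr}$, the $\widehat{h}$-part contributes $[\widehat{h},x,\overset{p^k-p^{k-1}}{\ldots},x]$ together with $\widetilde{d}_k(\widehat{h})$, and the central part contributes $[z,x^{p^{k-1}},\overset{p-1}{\ldots},x^{p^{k-1}}]$ (the factor $\widehat{h}$ playing no role here, as $[Z,H]=1$). By Lemma~\ref{lemma tilde L_k}(ii) we have $\widetilde{d}_k(\widehat{h})\in\widetilde{\Theta}_k\widetilde{\Lambda}_k\widetilde{L}_k$, while the iterated commutator of $z$ lands in $\widetilde{\Lambda}_k$, $\widetilde{\Omega}_k$ or $\widetilde{\Psi}_k$ according as the corresponding factor of $z$ lies in $\widetilde{\Theta}_{k-1}$, in $\widetilde{\Lambda}_{k-1}\cup\widetilde{\Omega}_{k-1}$, or in $\widetilde{L}_{k-1}\cup\widetilde{\Psi}_{k-1}$. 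Using the additivity of Lemma~\ref{lemma tilde L_k}(i) to handle products of $p$-th powers, a product lies in $Z$ only when its $x$- and $H$-parts cancel, and the surviving central part is then a product of factors all lying in $\widetilde{\Theta}_k\widetilde{\Lambda}_k\widetilde{\Omega}_k\widetilde{L}_k\widetilde{\Psi}_k$.

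The main obstacle is the bookkeeping that makes the induction close. One must verify that the iterated commutators $[z,x^{p^{k-1}},\overset{p-1}{\ldots},x^{p^{k-1}}]$ generated at level $k$ are captured without leakage by the freshly defined $\widetilde{\Omega}_k$ and $\widetilde{\Psi}_k$ (and by $\widetilde{\Lambda}_k$ via Lemma~\ref{lem:combinatorial-proof}), and that the correction terms arising from the non-centrality of $Z$ under $x$, as well as from passing from the base $x^{p^{k-1}r}$ to $x^{p^{k-1}}$, are absorbed into $\widetilde{\Theta}_k$ or into higher terms already accounted for. It is precisely to render this closure automatic that $\widetilde{\Omega}_k$ and $\widetilde{\Psi}_k$ are defined recursively as the images under $[\,\cdot\,,x^{p^{k-1}},\overset{p-1}{\ldots},x^{p^{k-1}}]$ of the level-$(k-1)$ data; the substance of the proof is confirming that this recursive packaging matches exactly the output of the $p$-th power map applied to $I_{k-1}(\mathfrak{G})$.
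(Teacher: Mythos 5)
Your proposal is correct and follows essentially the same route as the paper: the paper's own proof is precisely the inductive adaptation of Proposition~\ref{prop:lower-bound-P} to the series $\mathcal{I}$, using Lemma~\ref{lem:combinatorial-proof} to identify $[\widetilde{\Theta}_{k-1},x^{p^{k-1}},\overset{p-1}{\ldots},x^{p^{k-1}}]$ with $\widetilde{\Lambda}_k$ modulo $\widetilde{\Theta}_k$ and Lemma~\ref{lemma tilde L_k} for additivity and for $\widetilde{d}_k$, with the one highlighted subtlety being that the surviving central part $hh^*$ lies in $I_{k-1}(\mathfrak{G})\cap Z$ (via $\gamma_{2p^{k-1}}(\mathfrak{G})\le I_{k-1}(\mathfrak{G})$) and is then pushed into $\widetilde{\Lambda}_k\widetilde{\Omega}_k\widetilde{\Psi}_k$ by the recursive definitions --- exactly the mechanism you describe. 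Your write-up is, if anything, more explicit than the paper's two-paragraph sketch.
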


\begin{proof}
Using Lemmata \ref{lem:combinatorial-proof} and \ref{lemma tilde L_k}, we can adjust the proof of Proposition~\ref{prop:lower-bound-P} to the filtration series~$\mathcal{I}$.
The unique significant difference arises when, as done at the end of the aforementioned proof, we consider the element $[hh^*,x,\overset{p^{k}-p^{k-1}}{\ldots},x]$, where in our case $h\in\gamma_{p^{k-1}}(\mathfrak{G})\cap I_{k-1}(\mathfrak{G})$, $h^*\in\gamma_{2p^{k-1}}(\mathfrak{G})$, and $hh^*\in Z$.

First note that $\gamma_{2p^{k-1}}(\mathfrak{G})\le I_{k-1}(\mathfrak{G})$ by \cite[Prop. 3.4]{HK}, and so
$$
hh^*\in I_{k-1}(\mathfrak{G})\cap Z\le \widetilde{\Theta}_{k-1}\widetilde{\Lambda}_{k-1}\widetilde{\Omega}_{k-1}\widetilde{L}_{k-1}\widetilde{\Psi}_{k-1}
$$
by the inductive hypothesis.
Now, by definition, we have $[\widetilde{\Lambda}_{k-1}\widetilde{\Omega}_{k-1},x,\overset{p^{k}-p^{k-1}}{\ldots},x]\le \widetilde{\Omega}_k$ and  $[\widetilde{L}_{k-1}\widetilde{\Psi}_{k-1},x,\overset{p^{k}-p^{k-1}}{\ldots},x]\le \widetilde{\Psi}_k$.
Also, Lemma~\ref{lem:combinatorial-proof} yields $[\widetilde{\Theta}_{k-1},x,\overset{p^{k}-p^{k-1}}{\ldots},x]\le \widetilde{\Lambda}_k\widetilde{\Theta_k}$, so the result follows.
\end{proof}

\begin{corollary}
For the pro-$p$ group~$\mathfrak{G}$ and $K\le_{\mathrm{c}} Z$, we have
\[
\hdim_Z^{\mathcal{I}\mid_Z}(K)=\hdim_Z^{\mathcal{{\widetilde{\mathcal{S}}}}}(K),
\]
where $\widetilde{\mathcal{S}}$ is the filtration of $Z$ defined by the subgroups $\widetilde{\Theta}_k\widetilde{\Lambda}_k\widetilde{\Omega}_k$.
\end{corollary}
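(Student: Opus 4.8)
The plan is to run the argument of Corollary~\ref{cor:M-P p odd} (which itself mirrors Corollary~\ref{cor:M-P}) in the presence of the extra error terms supplied by Proposition~\ref{prop:lower-bound-I}. Write $\Delta_k=\widetilde{\Theta}_k\widetilde{\Lambda}_k\widetilde{\Omega}_k$ for the $k$th term of $\widetilde{\mathcal{S}}$ and set $\Delta_k'=\widetilde{\Theta}_k\widetilde{\Lambda}_k\widetilde{\Omega}_k\widetilde{L}_k\widetilde{\Psi}_k$. Proposition~\ref{prop:lower-bound-I} gives the nesting $\Delta_k\le I_k(\mathfrak{G})\cap Z\le\Delta_k'$ for every $k\in\N$, so it suffices, by \cite[Lem.~2.2]{KTZR}, to establish
\[
\lim_{k\to\infty}\frac{\log_p\lvert I_k(\mathfrak{G})\cap Z:\Delta_k\rvert}{\log_p\lvert Z:I_k(\mathfrak{G})\cap Z\rvert}=0 .
\]
Here I would use throughout that, for odd $p$, the subgroup $Z$ is elementary abelian, so that every index is an $\F_p$-dimension and may be computed by counting independent elements among the $z_{m,n}$.

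For the denominator I would invoke that $Z$ has strong Hausdorff dimension $1$ with respect to $\mathcal{I}$, together with the fact that the relevant scale of $\mathcal{I}$ is $p^{k}$: indeed $x^{p^k}\in I_k(\mathfrak{G})$ while $x^{p^{k-1}}\notin I_k(\mathfrak{G})$, and $\gamma_{2p^{k}}(\mathfrak{G})\le I_k(\mathfrak{G})$ by \cite{HK}. Since, as in the even prime case, $\log_p\lvert Z:\gamma_i(\mathfrak{G})\cap Z\rvert$ grows quadratically in $i$, this yields $\log_p\lvert Z:I_k(\mathfrak{G})\cap Z\rvert=\Theta(p^{2k})$, which is the quadratic lower bound we need.

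For the numerator it suffices to bound $\log_p\lvert I_k(\mathfrak{G})\cap Z:\Delta_k\rvert\le\log_p\lvert\Delta_k':\Delta_k\rvert=\dim_{\F_p}\!\bigl(\widetilde{L}_k\widetilde{\Psi}_k\Delta_k/\Delta_k\bigr)$ and to show this is $o(p^{2k})$. Exactly as in the reduction leading to \eqref{eq L_k normal closure} for the even prime, the normal closure $\widetilde{L}_k$ collapses modulo $\widetilde{\Theta}_k$: the commutator recursion $[\widetilde{w}_{i,j,k},x]=\widetilde{w}_{i+1,j,k}\widetilde{w}_{i,j+1,k}\widetilde{w}_{i+1,j+1,k}$ (obtained as in Lemma~\ref{lemma p^k}) relates the generators to one another, while spreading of indices past $p^k$ is absorbed into $\widetilde{\Theta}_k\le\Delta_k$; this leaves only $O(p^k)$ independent generators from $\widetilde{L}_k$. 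For $\widetilde{\Psi}_k$ one unwinds the recursion $\widetilde{\Psi}_k=[\widetilde{\Psi}_{k-1}\widetilde{L}_{k-1},x^{p^{k-1}},\overset{p-1}{\ldots},x^{p^{k-1}}]$: each commutation with a power of $x$ is $\F_p$-linear, hence non-increasing on dimensions, so $\dim_{\F_p}(\widetilde{\Psi}_k\bmod\Delta_k)$ is at most the accumulated contribution of $\widetilde{L}_1,\dots,\widetilde{L}_{k-1}$, which is $O(kp^k)$. Summing, $\dim_{\F_p}(\widetilde{L}_k\widetilde{\Psi}_k\Delta_k/\Delta_k)=O(kp^k)=o(p^{2k})$, so the ratio tends to $0$ and \cite[Lem.~2.2]{KTZR} gives the claim for every closed $K\le Z$.

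The main obstacle is the numerator estimate, and within it the treatment of the normal closure $\widetilde{L}_k$ and the nested subgroups $\widetilde{\Psi}_k$. The delicate point is that $x$-conjugation of a single $z_{m,n}$ spreads its support across a two-dimensional triangular region of $\{z_{m',n'}:n'<m'\}$, which a priori contributes quadratically many elements; the argument must show that all such spreading either raises the first index beyond $p^k$ (and so lands in $\widetilde{\Theta}_k$) or reproduces patterns already recorded in $\widetilde{\Lambda}_k\widetilde{\Omega}_k$, leaving only a sub-quadratic residue. Verifying this absorption carefully — and checking that the level-by-level accumulation defining $\widetilde{\Psi}_k$ does not amplify the count beyond $o(p^{2k})$ — is where essentially all the work lies; once the bound is in hand the comparison is immediate, as in Corollary~\ref{cor:M-P}.
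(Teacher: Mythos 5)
Your overall route---sandwiching $I_k(\mathfrak{G})\cap Z$ between $\Delta_k=\widetilde{\Theta}_k\widetilde{\Lambda}_k\widetilde{\Omega}_k$ and $\Delta_k'=\Delta_k\widetilde{L}_k\widetilde{\Psi}_k$ via Proposition~\ref{prop:lower-bound-I}, bounding $\log_p\lvert \Delta_k':\Delta_k\rvert=o(p^{2k})$ against a denominator of order $\Theta(p^{2k})$, and then invoking \cite[Lem.~2.2]{KTZR}---is precisely the argument the paper leaves implicit, in parallel with Corollary~\ref{cor:M-P}. The gap lies in your justification of the numerator bound for $\widetilde{L}_k$. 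The recursion $[\widetilde{w}_{i,j,k},x]=\widetilde{w}_{i+1,j,k}\widetilde{w}_{i,j+1,k}\widetilde{w}_{i+1,j+1,k}$ together with $\widetilde{w}_{i,i+1,k}=1$ only reduces the generating set of $\widetilde{L}_k$ to the normal closure of the diagonal family $\{\widetilde{w}_{i,i,k}\}$ (and of $\widetilde{d}_k(y)$); the iterated commutators $[\widetilde{w}_{i,i,k},x,\overset{r}{\ldots},x]$ that are not yet killed by $\widetilde{\Theta}_k$ still form a two-parameter family, with $(i,r)$ ranging over a triangle of side roughly $p^k-p^{k-1}$, hence of size $\Theta(p^{2k})$. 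So ``recursion plus absorption into $\widetilde{\Theta}_k$'' does not by itself leave $O(p^k)$ independent generators, and as written your main estimate is unproved.

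What rescues the count---and what you mention only in your closing paragraph as an obstacle---is absorption into $\widetilde{\Lambda}_k$. Since $Z$ is elementary abelian for odd $p$ and the iterated commutators of $z\in Z$ with $x$ all lie in the abelian group $Z$, one has $[z,x,\overset{p^k-p^{k-1}}{\ldots},x]=[z,x^{p^{k-1}},\overset{p-1}{\ldots},x^{p^{k-1}}]$; each step of the diagonal recursion produces, besides $\widetilde{w}_{i+1,i+1,k}$, exactly an element of this form with $z\in\widetilde{\Theta}_{k-1}$, and Lemma~\ref{lem:combinatorial-proof} places all such elements in $\widetilde{\Lambda}_k\widetilde{\Theta}_k\le\Delta_k$. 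Only after this step does $\widetilde{L}_k$ contribute $O(p^k)$ modulo $\Delta_k$ (the surviving diagonal generators with $2i\le p^k+p^{k-1}$ plus the $\langle x\rangle$-orbit of $\widetilde{d}_k(y)$), and the same absorption, fed through the recursions defining $\widetilde{\Psi}_k$ and $\widetilde{\Omega}_k$, gives your $O(kp^k)$ total. You correctly identify this as the crux, but you defer it rather than prove it; making the appeal to Lemma~\ref{lem:combinatorial-proof} and to the identity above explicit closes the argument.
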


\medskip

Finally, the proofs of Lemma~\ref{lemma remove generators}, Lemma~\ref{lemma hasudorff dimension}, Corollary~\ref{cor:reduce-to-Z} and Lemma~\ref{lemma blocks}, and the $p=2$ proof of Theorem~\ref{thm:main}  work exactly in the same way for $p$ odd and for $\mathcal{L}$, $\mathcal{D}$,  $\mathcal{M}$, $\mathcal{P}$, $\mathcal{I}$ 
and $\mathcal{F}$ (actually it is easier, as $H^p=1$). Hence Theorem~\ref{thm:main} follows.

\appendix
\section{ }

\begin{proof}[Proof of Lemma~\ref{lem:combinatorial-proof}]
Fix $k\in\N$, and consider the set $\mathcal{Z}=\{ z_{m,n} \mid p^{k-1}\le m<p^k,\,  n<m \}$. This set can be decomposed as
\[
\mathcal{Z}=\left(\bigcup_{1\le j\le i \le p-1} \mathcal{Z}_{i,j}\right) \quad \bigcup \quad \left(\bigcup_{1\le  i \le p-1} \mathcal{V}_i\right)
\]
where
\begin{align*}
    \mathcal{Z}_{i,j} &= \{ z_{m,n} \mid ip^{k-1}\le m<(i+1)p^{k-1},\, (j-1)p^{k-1}\le  n< jp^{k-1} \},\\
     \mathcal{V}_i &= \{ z_{m,n} \mid ip^{k-1}\le n<m<(i+1)p^{k-1} \}.
\end{align*}
It is helpful to visualise the elements of~$\mathcal{Z}$ as a grid where $m$ determines the row and $n$ the column, with $\mathcal{Z}_{i,j}$ representing the $p^{k-1}\times p^{k-1}$ squares if $j>1$ and the $p^{k-1}\times (p^{k-1}-1)$ squares if $j=1$,  and with $\mathcal{V}_i$  corresponding to the lower left triangles along the diagonal $m=n$.

Then, writing
\[
\mathcal{U}_{i,1}=\{ z_{m,n} 
\in\mathcal{Z}_{i,1}\mid n\ge m-i p^{k-1}\},
\]
and
\[
\mathcal{W}_{i,j}=\{ z_{ip^{k-1},(j-1)p^{k-1}}\}
\quad\text{for }i\ge j>1,
\]
it suffices to show that
\begin{align*}
&\big\langle [z_{m,n},x^{p^{k-1}},\overset{p-1}\ldots, x^{p^{k-1}}] \mid p^{k-1}\le m<p^k,\,  n<m \big\rangle\widetilde{\Theta}_k\\
&=\big\langle [z,x^{p^{k-1}},\overset{p-1}\ldots, x^{p^{k-1}}] \mid  z\in (\bigcup_{i=1}^{p-1}\mathcal{U}_{i,1} )\cup(\bigcup_{i=2}^{p-2}\mathcal{W}_{i,2})
\big\rangle\widetilde{\Theta}_k.
\end{align*}

Recall that, for $m\ge p^{k-1}$,
\[
[z_{m,n},x^{p^{k-1}},\overset{p-1}{\ldots},x^{p^{k-1}}]
\equiv \prod_{s=1}^{p-1}\left(\prod_{t=1}^{s}z_{m+(p-1-t)p^{k-1},n+(p-1-s+t)p^{k-1}}^{ \binom{p-1}{s}\binom{s}{t}}\right) \pmod {\widetilde{\Theta}_k};
\]
compare~\cite[Lem.~4.3]{HK}.
Note also that $\binom{p-1}{s}\equiv (-1)^s \pmod p$.

Now let $z\in \mathcal{Z}_{i,j}$ for fixed $i,j$. Writing $m=ip^{k-1}+\mu$ and $n=(j-1)p^{k-1}+\nu$ for $0\le \mu,\nu\le p^{k-1}-1$, we have
\begin{align*}
&[z_{ip^{k-1}+\mu,(j-1)p^{k-1}+\nu},x^{p^{k-1}},\overset{p-1}{\ldots},x^{p^{k-1}}]\\
&\qquad\qquad\qquad\equiv\prod_{s=i}^{p-1}\left(\prod_{t=i}^{s}z_{\mu+(p-1-t+i)p^{k-1},\nu+(p-1-(s-t)+(j-1))p^{k-1}}^{ (-1)^s\binom{s}{t}}\right)\pmod {\widetilde{\Theta}_k}.
\end{align*}
From the above it is clear that
for $\ell\le\frac{p-1}{2}$,
\[
\big\langle [z,x^{p^{k-1}},\overset{p-1}\ldots, x^{p^{k-1}}] \mid z\in\mathcal{Z}_{p-\ell,\ell+1}\cup\cdots\cup \mathcal{Z}_{p-\ell,p-\ell}\big\rangle\le \widetilde{\Theta}_k
\]
and similarly
\[
\big\langle [z,x^{p^{k-1}},\overset{p-1}\ldots, x^{p^{k-1}}] \mid z\in \mathcal{V}_{p-\ell}\big\rangle\le \widetilde{\Theta}_k.
\]

Now, set $\widetilde{\Theta}_{k,0}=\widetilde{\Theta}_k$
and recursively define, 
for $1\le \tau\le p-2$, 
\[
\widetilde{\Theta}_{k,\tau}=\big\langle
[z,x^{p^{k-1}},\overset{p-1}\ldots, x^{p^{k-1}}] \mid z\in\mathcal{U}_{p-\tau,1} \cup \mathcal{W}_{p-\tau-1,2}
\big\rangle\widetilde{\Theta}_{k,\tau-1},
\]
where $\mathcal{W}_{1,2}=\varnothing$.

We claim that for $1\le \tau\le p-1$,
\begin{equation}\label{eq:claim}
\begin{split}
&\big\langle [z,x^{p^{k-1}},\overset{p-1}\ldots, x^{p^{k-1}}] \mid z\in\mathcal{Z}_{p-\ell,\ell-\tau+1},\, \tau\le \ell\le \tfrac{p+\tau-1}{2} \big\rangle \widetilde{\Theta}_{k,\tau-1}\\
&\qquad\qquad=\big\langle [z,x^{p^{k-1}},\overset{p-1}\ldots, x^{p^{k-1}}] \mid z\in\mathcal{U}_{p-\tau,1} \cup \mathcal{W}_{p-\tau-1,2} \big\rangle \widetilde{\Theta}_{k,\tau-1}\\
&\qquad\qquad =\Big\langle\prod_{s=p-\tau}^{p-1}\prod_{t=p-\tau}^{s}z_{\mu+(p-1-t+p-\tau)p^{k-1},\nu+(p-1-s+t)p^{k-1}}^{ (-1)^s\binom{s}{t}}\mid \nu\ge\mu\Big\rangle\widetilde{\Theta}_{k,\tau-1}.
\end{split}
\end{equation}

Indeed, for $\tau=1$, the result is clear since
\begin{align*}
&[z_{(p-\ell)p^{k-1}+\mu,(\ell-1)p^{k-1}+\nu},x^{p^{k-1}},\overset{p-1}{\ldots},x^{p^{k-1}}]\\
&\qquad\qquad\qquad\equiv\prod_{t=p-\ell}^{p-1}z_{\mu+(p-1-t+p-\ell)p^{k-1},\nu+(\ell-1+t)p^{k-1}}^{ (-1)^{p-1}\binom{p-1}{t}}\pmod {\widetilde{\Theta}_k}\\
&\qquad\qquad\qquad\equiv z_{\mu+(p-1)p^{k-1},\nu+(p-1)p^{k-1}}^{ \binom{p-1}{p-\ell}}\pmod {\widetilde{\Theta}_k}.
\end{align*}

Suppose now that the result holds for $\tau-1$. Recall that for $z\in \mathcal{Z}_{i,j}$ we write $m=ip^{k-1}+\mu$ and $n=(j-1)p^{k-1}+\nu$ for $0\le \mu,\nu\le p^{k-1}-1$. We first assume that $\nu>0$, and we write $\mathcal{Z}_{i,j}^*$ for all such $z$ with $\nu>0$. 
For   $z\in  \mathcal{Z}_{p-\ell,\ell-\tau+1}^*
$ with $\tau\le \ell\le \tfrac{p+\tau-1}{2}$, we obtain
\begin{align*}
&[z_{(p-\ell)p^{k-1}+\mu,(\ell-\tau)p^{k-1}+\nu},x^{p^{k-1}},\overset{p-1}{\ldots},x^{p^{k-1}}]\\
&\equiv\prod_{s=p-\tau}^{p-1}\left(\prod_{t=p-\ell}^{s}z_{\mu+(p-1-t+p-\ell)p^{k-1},\nu+(p+\ell-\tau-1-s+t)p^{k-1}}^{ (-1)^s\binom{s}{t}}\right)\pmod {\widetilde{\Theta}_{k}}\\
&\equiv\left(z_{\mu+(p-1)p^{k-1},\nu+(p-1)p^{k-1}}^{ (-1)^{p-\tau}\binom{p-\tau}{p-\ell}}\right)\left(z_{\mu+(p-1)p^{k-1},\nu+(p-2)p^{k-1}}^{ (-1)^{p-\tau+1}\binom{p-\tau+1}{p-\ell}} z_{\mu+(p-2)p^{k-1},\nu+(p-1)p^{k-1}}^{ (-1)^{p-\tau+1}\binom{p-\tau+1}{p-\ell+1}}\right)\\
&\qquad\times \cdots\times \left(\prod_{t=p-\ell}^{p-2-(\ell-\tau)}z_{\mu+(p-1-t+p-\ell)p^{k-1},\nu+(\ell-\tau+1+t)p^{k-1}}^{ -\binom{p-2}{t}}\right)\\
&\qquad\times \left(\prod_{t=p-\ell}^{p-1-(\ell-\tau)}z_{\mu+(p-1-t+p-\ell)p^{k-1},\nu+(\ell-\tau+t)p^{k-1}}^{ \binom{p-1}{t}}\right)\pmod {\widetilde{\Theta}_{k}}.
\end{align*}
We begin by establishing that
\begin{equation}\label{eq:claim2}
\begin{split}
&\big\langle [z,x^{p^{k-1}},\overset{p-1}\ldots, x^{p^{k-1}}] \mid z\in\mathcal{Z}_{p-\ell,\ell-\tau+1}^*
,\, \tau\le \ell\le \tfrac{p+\tau-1}{2} \big\rangle \widetilde{\Theta}_{k,\tau-1}\\
&\qquad\qquad=\big\langle [z,x^{p^{k-1}},\overset{p-1}\ldots, x^{p^{k-1}}] \mid z\in\mathcal{Z}_{p-\tau,1}  \big\rangle \widetilde{\Theta}_{k,\tau-1} 
\end{split}
\end{equation}
 If $\ell=
\tau$, the result is clear, so we assume that $\ell>\tau$.
Multiplying the above element with $[z_{(p-\ell+1)p^{k-1}+\mu,(\ell-\tau)p^{k-1}+\nu},x^{p^{k-1}},\overset{p-1}{\ldots},x^{p^{k-1}}]$ yields, modulo $\widetilde{\Theta}_{k}$,
\begin{align*}
    &\left(z_{\mu+(p-1)p^{k-1},\nu+(p-1)p^{k-1}}^{ (-1)^{p-\tau+1}\binom{p-\tau}{p-\ell+1}}\right)\left(z_{\mu+(p-1)p^{k-1},\nu+(p-2)p^{k-1}}^{(-1)^{p-\tau+2}\binom{p-\tau+1}{p-\ell+1}} z_{\mu+(p-2)p^{k-1},\nu+(p-1)p^{k-1}}^{(-1)^{p-\tau+2}\binom{p-\tau+1}{p-\ell+2}}\right)\\
&\qquad\times \cdots\times \left(\prod_{t=p-\ell+1}^{p-1-(\ell-\tau)}z_{\mu+(p-t+p-\ell)p^{k-1},\nu+(\ell-\tau+1+t)p^{k-1}}^{\binom{p-2}{t}}\right)\\
&\qquad\times \left(\prod_{t=p-\ell}^{p-1-(\ell-\tau)}z_{\mu+(p-1-t+p-\ell)p^{k-1},\nu+(\ell-\tau+t)p^{k-1}}^{ \binom{p-1}{t}}\right)\\
&\equiv\prod_{s=p-\tau}^{p-1}\left(\prod_{t=p-\ell+1}^{s}z_{\mu+(p-t+p-\ell)p^{k-1},\nu+(p-2+\ell-\tau-s+t)p^{k-1}}^{ (-1)^{s+1}\binom{s}{t}} \right) \pmod {\widetilde{\Theta}_{k}}\\
&\equiv[z_{(p-\ell+1)p^{k-1}+\mu,(\ell-\tau-1)p^{k-1}+\nu},x^{p^{k-1}},\overset{p-1}{\ldots},x^{p^{k-1}}]^{-1}  \pmod {\widetilde{\Theta}_{k}}.
\end{align*}
Therefore, as  $z_{(p-\ell+1)p^{k-1}+\mu,(\ell-\tau)p^{k-1}+\nu}\in\mathcal{Z}_{p-(l-1),(l-1)-(\tau-1)+1}$, repeating this process yields
\begin{equation}\label{eq:moving-up}
\begin{split}
&[z_{(p-\ell)p^{k-1}+\mu,(\ell-\tau)p^{k-1}+\nu},x^{p^{k-1}},\overset{p-1}{\ldots},x^{p^{k-1}}]\\
&\qquad\equiv[z_{(p-\ell+1)p^{k-1}+\mu,(\ell-\tau-1)p^{k-1}+\nu},x^{p^{k-1}},\overset{p-1}{\ldots},x^{p^{k-1}}]^{-1} \pmod {\widetilde{\Theta}_{k,\tau-1}}\\ &\qquad\,\,\, \vdots \\
&\qquad\equiv [z_{(p-\tau)p^{k-1}+\mu,\nu},x^{p^{k-1}},\overset{p-1}{\ldots},x^{p^{k-1}}]^{\pm 1} \pmod {\widetilde{\Theta}_{k,\tau-1}},
\end{split}
\end{equation}
In other words, 
\begin{align*}
&\big\langle [z,x^{p^{k-1}},\overset{p-1}\ldots, x^{p^{k-1}}] \mid z\in\mathcal{Z}_{p-\tau,1} \big\rangle \widetilde{\Theta}_{k,\tau-1}\\
&\qquad\qquad=\big\langle [z,x^{p^{k-1}},\overset{p-1}\ldots, x^{p^{k-1}}] \mid z\in\mathcal{Z}_{p-\tau-1,2}^* \big\rangle \widetilde{\Theta}_{k,\tau-1}\\
&\qquad\qquad\,\,\,\vdots\\
&\qquad\qquad=\big\langle [z,x^{p^{k-1}},\overset{p-1}\ldots, x^{p^{k-1}}] \mid  z\in\mathcal{Z}_{\lceil\frac{p-\tau+1}{2}\rceil,\lfloor\frac{p-\tau+1}{2}\rfloor }^* \big\rangle \widetilde{\Theta}_{k,\tau-1},
\end{align*}
which yields~\eqref{eq:claim2}.

We consider now the case $\nu=0$, which only occurs for the squares $\mathcal{Z}_{i,j}$ where $j>1$.
Further it suffices to consider such elements 
$z\in \mathcal{Z}_{p-\ell,\ell-\tau+1}$ with $2<\tau< \ell\le \tfrac{p+\tau-1}{2}$. Similarly one obtains
\begin{align*}
&\big\langle [z,x^{p^{k-1}},\overset{p-1}\ldots, x^{p^{k-1}}] \mid z\in\mathcal{Z}_{p-\tau-1,2} \backslash \mathcal{Z}_{p-\tau-1,2}^*\big\rangle \widetilde{\Theta}_{k,\tau-1}\\
&\qquad\qquad=\big\langle [z,x^{p^{k-1}},\overset{p-1}\ldots, x^{p^{k-1}}] \mid z\in \mathcal{Z}_{p-\tau-2,3} \backslash \mathcal{Z}_{p-\tau-2,3}^*\big\rangle \widetilde{\Theta}_{k,\tau-1}\\
&\qquad\qquad\,\,\,\vdots\\
&\qquad\qquad=\big\langle [z,x^{p^{k-1}},\overset{p-1}\ldots, x^{p^{k-1}}] \mid  \mathcal{Z}_{\lceil\frac{p-\tau+1}{2}\rceil-1,\lfloor\frac{p-\tau+1}{2}\rfloor+1}\backslash \mathcal{Z}_{\lceil\frac{p-\tau+1}{2}\rceil-1,\lfloor\frac{p-\tau+1}{2}\rfloor+1}^* \big\rangle \widetilde{\Theta}_{k,\tau-1},
\end{align*}
where of course $\mathcal{Z}_{i,j}=\varnothing$ if $j>i$.


Hence we have the first step towards claim~\eqref{eq:claim}. It remains to show that
\begin{equation}
\label{eq: symmetry}
\begin{split}
\mathcal{U}_{\tau}&:=\big\langle [z,x^{p^{k-1}},\overset{p-1}\ldots, x^{p^{k-1}}] \mid z\in\mathcal{Z}_{p-\tau,1} \big\rangle \widetilde{\Theta}_{k,\tau-1}\\
&=\big\langle [z,x^{p^{k-1}},\overset{p-1}\ldots, x^{p^{k-1}}] \mid z\in\mathcal{U}_{p-\tau,1} \big\rangle \widetilde{\Theta}_{k,\tau-1}\\
&=\Big\langle\prod_{s=p-\tau}^{p-1}\prod_{t=p-\tau}^{s}z_{\mu+(p-1-t+p-\tau)p^{k-1},\nu+(p-1-s+t)p^{k-1}}^{ (-1)^s\binom{s}{t}} \mid \nu\ge \mu\Big\rangle\widetilde{\Theta}_{k,\tau-1}
\end{split}
\end{equation}
and 
\begin{equation}
\label{eq: W}
\begin{split}
&\big\langle [z,x^{p^{k-1}},\overset{p-1}\ldots, x^{p^{k-1}}] \mid z\in\mathcal{Z}_{p-\tau-1,2} \backslash \mathcal{Z}_{p-\tau-1,2}^* \big\rangle \mathcal{U}_{\tau}\\
&\qquad\qquad=\big\langle [z,x^{p^{k-1}},\overset{p-1}\ldots, x^{p^{k-1}}] \mid z\in\mathcal{W}_{p-\tau-1,2} \big\rangle \mathcal{U}_{\tau}.
\end{split}
\end{equation}

Suppose that $0<\nu<\mu$. 
Akin to \eqref{eq:moving-up}, we have that
\begin{align*}
    [z_{(p-\tau)p^{k-1}+\mu,\nu},x^{p^{k-1}},\overset{p-1}{\ldots},x^{p^{k-1}}]
    &\equiv  [z_{\lfloor\frac{p-\tau}{2}\rfloor p^{k-1}+\mu,\lceil\frac{p-\tau}{2}\rceil p^{k-1}+\nu},x^{p^{k-1}},\overset{p-1}{\ldots},x^{p^{k-1}}]^{\pm 1}\\
    &\equiv [z_{\lceil\frac{p-\tau}{2}\rceil p^{k-1}+\nu,\lfloor \frac{p-\tau}{2}\rfloor p^{k-1}+\mu},x^{p^{k-1}},\overset{p-1}{\ldots},x^{p^{k-1}}]^{\mp 1}\\
    &\equiv [z_{(p-\tau)p^{k-1}+\nu,\mu},x^{p^{k-1}},\overset{p-1}{\ldots},x^{p^{k-1}}]^{\epsilon},
    \end{align*}
where $\epsilon\in\{-1,1\}$, depending on whether $\tau$ is even or odd.
This proves (\ref{eq: symmetry}), and similarly for (\ref{eq: W}).

Finally, we  consider $\mathcal{V}_i$, for $1\le i\le \frac{p-1}{2}$. In a similar manner, one can show that
\[
\big\langle [z,x^{p^{k-1}},\overset{p-1}\ldots, x^{p^{k-1}}] \mid z\in\mathcal{V}_i \big\rangle \widetilde{\Theta}_{k,p-2i}= \widetilde{\Theta}_{k,p-2i},
\]
and the result follows.
\end{proof}

We note that some of the generators in the presentation of $\widetilde{\Lambda}_k$ where $\mu=\nu$ are trivial modulo $\widetilde{\Theta}_k$.
The number of such generators is insignificant in relation to the minimum generating set of $\widetilde{\Lambda}_k$.

\medskip

\end{document}